\definecolor{lgray}{gray}{0.75}
\newcommand{\pch}{\chi_{\rho}}
\newcommand{\diam}{{\rm diam}}
\newcommand{\qed}{\hfill $\square$ \bigskip}
\newcommand{\mptt}[1]{}
\newtheorem{theorem}{Theorem} 
\newtheorem{corollary}[theorem]{Corollary}
\newtheorem{lemma}[theorem]{Lemma}
\newtheorem{problem}{Problem}
\newcommand{\NN}{\mathbb N}
\begin{document}

\title{\bf Packing coloring of Sierpi\'{n}ski-type graphs}

\author{
Bo\v{s}tjan Bre\v{s}ar $^{a,b}$  \and Jasmina Ferme $^{c,a}$ 
 }

\date{}

\maketitle

\begin{center}
$^a$ Faculty of Natural Sciences and Mathematics, University of Maribor, Slovenia\\
\medskip

$^b$ Institute of Mathematics, Physics and Mechanics, Ljubljana, Slovenia\\
\medskip

$^c$ Faculty of Education, University of Maribor, Slovenia\\

\end{center}

\begin{abstract}
The packing chromatic number $\pch(G)$ of a graph $G$ is the smallest integer $k$ such that the vertex set of $G$ can be partitioned into sets $V_i$,  $i\in \{1,\ldots,k\}$, where each $V_i$ is an $i$-packing. In this paper, we consider the packing chromatic number of several families of Sierpi\' nski-type graphs. While it is known that this number is bounded from above by $8$ in the family of Sierpi\' nski graphs with base $3$, we prove that it is unbounded in the families of Sierpi\' nski graphs with bases greater than $3$. On the other hand, we prove that the packing chromatic number in the family of Sierpi\' nski triangle graphs $ST^n_3$ is bounded from above by $31$. Furthermore, we establish or provide bounds for the packing chromatic numbers of generalized Sierpi\' nski graphs $S^n_G$ with respect to all connected graphs $G$ of order 4.
\end{abstract}

\noindent {\bf Key words:} packing chromatic number; Sierpi\' nski graph; Sierpi\' nski triangle.

\medskip\noindent
{\bf AMS Subj.\ Class:} 05C15, 05C70, 05C12

\section{Introduction}
Given a graph $G$ and a positive integer $i$, an {\em $i$-packing} in $G$ is a subset $W$ of the vertex set of $G$ such that the distance between any two distinct vertices from $W$ is greater than $i$. This generalizes the notion of an independent set, which is equivalent to a $1$-packing. The {\em packing chromatic number} of $G$ is the smallest integer $k$ such that the vertex set of $G$ can be partitioned into sets $V_1,\ldots, V_k$, where $V_i$ is an $i$-packing for each $i\in [k]$ (where $[k] = \{1,\ldots, k\}$). This invariant is well defined in any graph $G$ and is denoted by $\pch(G)$.
The corresponding mapping $c:V(G)\longrightarrow [k]$ having the property that 
$c(u)=c(v)=i$ implies $d(u, v) > i$, where $d(u,v)$ is the usual shortest-path distance between $u$ and $v$, is called a {\em $k$-packing coloring}. The packing chromatic number was introduced in~\cite{goddard-2008} (the name broadcast chromatic number was used in the first paper, indicating potential applications of the concept), and was subsequently studied in a number of paper, see~\cite{argiroffo-2014, balogh-2017+, barnaby-2017, bkr-2007, bkr-2016, bkrw-2017a, bkrw-2017b, ekstein-2014, fiala-2010, fiala-2009, finbow-2010, goddard-2012, jacobs-2013, korze-2014, lbe-2016, shao-2015, togni-2014, torres-2015}.

One of the main areas of investigation has been to determine the packing chromating numbers of infinite graphs such as infinite grids, lattices, distance graphs, etc.~\cite{barnaby-2017,bkr-2007,ekstein-2014,fiala-2009,finbow-2010,korze-2014}. For instance, the question of what is the packing chromatic number of the infinite square grid was initiated already in the seminal paper~\cite{goddard-2008}, and in some further papers a lower and an upper bound for this number were improved;  currently, the packing chromatic number of the square grid is known to lie between 13 and 15~\cite{barnaby-2017}. While the packing chromatic number of the hexagonal lattice was shown to be 7~ (joint efforts in the papers~\cite{bkr-2007,fiala-2009,korze-2014} give this value),  that of the triangular lattice is infinite~\cite{finbow-2010}. 

The results of Sloper~\cite{sloper-2004} imply that in the infinite 3-regular tree the packing chromatic number is 7, while in the infinite $k$-regular tree with $k>3$ the packing chromatic number is infinite. From this, one can infer that in the class of graphs with degree bounded by $k$ the packing chromatic number is unbounded as soon as $k\ge 4 $. Several recent papers considered the question of whether the packing chromatic number in the class of graphs with maximum degree 3 (i.e., subcubic graphs) is bounded by some constant \cite{bkr-2016,gt-2016,bkrw-2017a}, but in the very recent, not yet published paper~\cite{balogh-2017+} the authors prove that this is not the case. Nevertheless, the question of boundedness of the packing chromatic number in some natural infinite classes of graphs remains interesting. For instance, for the Sierpi\' nski graphs with base $3$, $S^n_3$, it was proven that their packing chromatic number is between $8$ and $9$ as soon as $n\ge 5$~\cite{bkr-2016}. (In the recent manuscript~\cite{vesel} it was shown that in fact $\pch(S^n_3)=8$ for $n\ge 5$.) As the Sierpi\' nski graphs $S^n_k$ form a fractal-like class of graphs, which can be built by a recursive procedure in which the graphs from the class of smaller dimension are used as building blocks, it seems particularly interesting to study the packing coloring in these graphs. Let us recall the definition of this class of graphs.

Let $[k]_0=\{0, 1, 2, \ldots, k-1\}$. The \textit{Sierpi\'nski graphs} $S^n_k$ of {\em dimension $n$}, $n \geq 1$, and {\em base $k$} have $[k]_0^n$ as the vertex set, and the edge set is defined recursively as $$ E(S^n_k)=\{\{is, it\}: i \in [k]_0; \{s, t\} \in E(S^{n-1})\} \cup \{\{ij^{n-1}, ji^{n-1}\}; i, j \in [k]_0, i  \neq j \}. $$
In other words, $S^n_k$ can be constructed from $k$ copies of $S^{n-1}_k$ in the following way. For each $j \in [k]_0$ concatenate $j$ to the left of the vertices in a copy of $S^{n-1}_k$ and denote the obtained graph by $jS^{n-1}$. Next for each $i \neq j$ join copies $iS^{n-1}$ and $jS^{n-1}$ by the single edge $\{ij^{n-1}, ji^{n-1}\}$. 
Note that the diameter of $S^n_k$ is $2^{n}-1$.

A natural question arises whether the packing chromatic number in the class of Sierpi\' nski graphs with bases greater than 3 is also bounded from above as is the case with base 3 Sierpi\' nski graphs. In Section~\ref{sec:base4} we prove that this is not the case, even when the class $S^{n}_4$ is considered. In the proof we use the recursive structure of these graphs, and combine a lower bound on $\pch(S^n_4)$ with the diameter of $S^{n+1}_4$ to obtain a lower bound for $\pch(S^{n+1}_4)$; the resulting sequence of lower bounds turns out to be positive and increasing, which suffices for the proof. This result, which is in a sense negative, motivated us to consider other variations of Sierpi\' nski-type graphs. The first such variation are generalized Sierpi\' nski graphs, as introduced in~\cite{gkp-11}. They have similarly defined recursive structure as Sierpi\' nski graphs, but  the edges, loosely speaking, reflect the structure of a graph $G$ with respect to which the generalized Sierpi\' nski graph $S^n_G$ is defined (in particular, the generalized Sierpi\' nski graph $S^n_{K_k}$ coincides with $S^n_k$). Total colorings and standard colorings of the generalized Sierpi\' nski graphs have already been considered, cf.~\cite{gs-15,rre-17+}. The formal definition of these graphs is presented in Section~\ref{sec:base4}. In Section~\ref{sec:generalized} we study the packing chromatic numbers of the generalized Sierpi\' nski graphs $S^n_G$ for all connected graphs $G$ of order $4$. As $S^n_G$ is a spanning subgraph of $S^{n}_4$, its packing chromatic number is bounded from above by $\pch(S^{n}_4)$, which gives the possibility that the set of values $\{\pch(S^n_G)\,|\,n\in \NN\}$ is bounded for any such $G$ different from $K_4$. As it turns out, this is indeed the case, and for each of these classes of graphs we present either the exact values of their packing chromatic numbers, or we give upper and lower bounds for these numbers.

Finally, in Section~\ref{sec:trikotnik}, we consider yet another variation of Sierpi\' nski graphs, called the Sierpi\' nski triangle graphs, $ST^n_3, n\in \NN_0$. They were studied in a number of papers, but were also called by several different names such as Sierpi\' nski gasket graphs; e.g.~see~\cite{jk-09} where different types of vertex and edge colorings were considered for this class of graphs (cf. also~\cite{hkz-17} for a survey on Sierpi\' nski-type graphs in which one can also find an explanation of different terminology issues). Interestingly, the graphs in this class are subgraphs of the infinite triangular lattice, hence in the limit they yield a subgraph of the infinite triangular lattice. As mentioned above, it is known that the packing chromatic number of the triangular lattice is infinite~\cite{finbow-2010}. Thus, it is particularly interesting that this number is bounded in the class of Sierpi\' nski triangle graphs,  where our construction gives $\pch(ST^n_3)\le 31$.

\section{Sierpi\' nski graphs $S^n_k$ with $k$ greater than 3}
\label{sec:base4}
%
%
%

In this section we prove one of our main results that the packing chromatic number in the class of graphs $S^n_k$, where $k$ is a fixed integer greater than 3, is not bounded from above. This result motivates all further investigations in the subsequent sections.

\begin{theorem}
Let $k \geq 4$ and let $S^n_k$ be the Sierpi\'nski graph of dimension $n$ and base $k$. The sequence $(\chi_\rho(S^n_k))_{n \in \NN}$ is unbounded from above. 
\label{prvi_izrek}
\end{theorem}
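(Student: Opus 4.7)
Write $m_n := \pch(S^n_k)$ and $D_n := \mathrm{diam}(S^n_k) = 2^n - 1$. The plan is to prove $m_n \to \infty$ by establishing a recursive lower bound on $m_{n+1}$ in terms of $m_n$ and $D_n$, and then showing that the resulting sequence of lower bounds is strictly increasing and unbounded. Two structural facts drive everything.

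First is a \emph{restriction argument}: the restriction of any packing coloring of $S^{n+1}_k$ to one of its $k$ copies $jS^n$ of $S^n_k$ is again a packing coloring, because distances in an induced subgraph are at least as large as in the ambient graph. Hence every copy must receive at least $m_n$ distinct colors, so in particular $m_{n+1} \geq m_n$. Second is a \emph{diameter obstruction}: each copy $jS^n$ has diameter $D_n$, so for any color $i \geq D_n$ the class $V_i$ can meet a single copy in at most one vertex, and consequently $|V_i| \leq k$ in all of $S^{n+1}_k$.

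Combining these with the identity $k^{n+1} = \sum_{i=1}^{\ell} |V_i|$ for a packing coloring of $S^{n+1}_k$ using $\ell = m_{n+1}$ colors, and denoting by $p_n(i)$ the size of a largest $i$-packing in $S^n_k$, I would deduce
\[
k^{n+1} \;\leq\; k\sum_{i=1}^{D_n - 1} p_n(i) + k\,\max(\ell - D_n + 1, 0),
\]
where the first sum bounds the vertices receiving ``low'' colors (one copy at a time via restriction) and the second bounds the vertices receiving ``high'' colors. Rearranging yields
\[
m_{n+1} \;\geq\; k^n - P_n + D_n - 1, \quad P_n := \sum_{i=1}^{D_n-1} p_n(i),
\]
and one checks separately that if $\ell < D_n$, the analogous inequality $k^n \leq P_n$ must hold, which contradicts the behaviour of $P_n$ beyond the base case. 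For $k \geq 4$ an inductive accounting shows that $P_n$ stays strictly below $k^n$, and in fact below $k^n$ by a margin that itself grows with $n$: a packing coloring of a single copy of $S^n_k$ with $m_n$ colors is forced to use at least $m_n - D_n + 1$ vertices on ``high'' colors (since each color $i \geq D_n$ occupies at most one vertex of the copy), and this deficit propagates through the $k$-fold union when passing to $S^{n+1}_k$.

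The main obstacle will be arranging for this recursion to be genuinely self-sustaining — i.e., showing that the new lower bound on $m_{n+1}$ not only exceeds $m_n$ but is large enough (roughly, at least $D_{n+1}$) for the diameter-based counting to reapply at the next level, so that the sequence of lower bounds is strictly increasing. This requires careful quantitative tracking of how the ``forced high-color'' deficit in a single copy grows under the $k$-fold recursion, exploiting that for $k \geq 4$ there are too few bridges (only $\binom{k}{2}$, each a single edge) relative to the local degree to let us redistribute high colors between copies and undo the deficit. The base case can be handled by a direct application of the counting inequality without recursion — for instance, $\pch(S^2_4) \geq 11$ follows from $|V(S^2_4)| = 16$, $\alpha(S^2_4) = 4$, a maximum $2$-packing of size $3$, and $|V_i| \leq 1$ for $i \geq 3 = D_2$ — providing a starting point strictly above $D_2$, from which the induction takes off.
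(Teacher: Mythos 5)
Your two structural ingredients (restricting a packing coloring to a copy $jS^n$, and the fact that a color $i\ge \diam$ of a copy can occupy at most one vertex per copy) are exactly the ones the paper uses, but your execution is genuinely different: you count \emph{vertices} against maximum packing sizes $p_n(i)$, whereas the paper counts \emph{colors}. The paper's argument is: each of the $k$ copies of $S^n_k$ needs at least $a_n$ colors, of which at least $a_n-\diam(S^{n+1}_k)+1$ are $\ge\diam(S^{n+1}_k)$ and hence confined to that single copy; these high colors are therefore pairwise distinct across copies, giving $a_{n+1}=a_n+(k-1)\bigl(a_n-\diam(S^{n+1}_k)+1\bigr)$ with $a_1=k$, after which one solves the recurrence in closed form and checks it is increasing for $k\ge 4$. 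No packing numbers $p_n(i)$ and no vertex counting are needed.

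The genuine gap in your plan is the central claim that $P_n=\sum_{i=1}^{D_n-1}p_n(i)$ stays strictly below $k^n$: everything hinges on it (indeed $P_n<k^n$ already forces $m_{n+1}\ge D_n-1=2^n-2\to\infty$, so no further ``self-sustaining'' bookkeeping on $m_n$ would be required), yet you only assert it via a vague ``deficit propagation'' heuristic, and the mechanism you invoke (counting the $\binom{k}{2}$ bridge edges) is not what makes it work. The claim is true and your approach is completable, but by a different induction than the one you sketch: from $p_n(i)\le k\,p_{n-1}(i)$ for $i\le D_{n-1}-1$ and $p_n(i)\le k$ for $i\ge D_{n-1}$ one gets $P_n\le kP_{n-1}+k(D_n-D_{n-1})=kP_{n-1}+k2^{n-1}$ with $P_1=0$, and for $k\ge 4$ this solves to $P_n\le k^n\sum_{j\ge1}(2/k)^{j}<k^n$; this is the step you must actually supply. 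Two smaller corrections: your worry that the lower bound on $m_{n+1}$ must reach $D_{n+1}$ for the argument to iterate is misplaced, since the induction should run on $P_n$ rather than on $m_n$; and your base-case datum is wrong --- $\{00,11,22,33\}$ is a $2$-packing of $S^2_4$ of size $4$ (pairwise distances equal $3$), so the maximum $2$-packing has size $4$, not $3$, and your counting gives $\pch(S^2_4)\ge 10$ rather than $11$ (still ample for your purposes).
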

\begin{proof}
Let $k$ be an arbitrary and fixed integer such that $k\ge 4$. In this proof, we build a sequence $(a_{n})_{n \in \NN}$, where each $a_n$ presents a lower bound for the packing chromatic number of the Sierpi\'nski graph $S^n_k$. Clearly,
as $S^1_k$ coincides with $K_k$, and $\pch(K_k)=k$, we can set $a_1=k$. 
 
Note that $S^{n+1}_k$ contains a subgraph, isomorphic to $S^n_k$. Hence for a packing coloring $c$ of graph $S^{n+1}_k$ at least $a_{n}$ colors are used. But, of these colors used by $c$, the colors $\diam(S^{n+1}_k),\diam(S^{n+1}_k)+1,\ldots, a_{n}$ can be used only once in the entire $S^{n+1}_k$. 
Each of the mentioned colors can be used only in one copy of $S^n_k$. Since $S^{n+1}_k$ contains $k$ disjoint copies of subgraphs isomorphic to $S^n_k$, the packing coloring $c$ of $S^{n+1}_k$ uses, beside the mentioned (at least $a_n$) colors, at least $(a_{n} - \diam(S^{n+1}_k)+1)\cdot(k-1)$ additional colors. 

Therefore, $$a_{n+1}=a_{n}+(a_{n} - \diam(S^{n+1}_k)+1)\cdot(k-1),$$ 
and note that $a_{n+1}$ is a lower bound for $\pch(S^{n+1}_k)$, 
since we assumed that $a_n$ is a lower bound for $\pch(S^n_k)$.
Since $\diam(S^{n+1}_k)=2^{n+1}-1$, $$a_{n+1}=k\cdot a_{n} - 2^{n+1} \cdot (k-1) + k-1.$$ By solving this recurrence relation we obtain $$a_{n+1}=\frac{(4-k)\cdot k^{n+1} -2\cdot(2-k)-2^{n+2}\cdot(k-1)}{2-k}\,.$$ We claim that $(a_n)_{n \in \NN}$ is a strictly increasing sequence. 

Indeed, since the inequality $\frac{(4-k)\cdot k^n-2 \cdot (2-k)-2^{n+1}\cdot (k-1)}{2-k}$ $<$ $\frac{(4-k) \cdot k^{n+1}-2 \cdot (2-k)-2^{n+2}\cdot (k-1)}{2-k}$ is equivalent to the inequality $2^{n+1} > (4-k) \cdot k^n$, we get $a_{n} < a_{n+1}$ for any $k \geq 4$. This means that the sequence $(a_n)_{n \in \NN}$ is strictly increasing as soon as $k\ge 4$ (naturally, for $k=3$ this is not the case, as the result of~\cite{bkr-2016} also shows). Thus $(\chi_{\rho} (S^n_k))_{n \in \NN}$ is unbounded when $k>3$. 
\qed
\end{proof}

As the situation of the packing chromatic number of the standard Sierpi\'{n}ski graphs is in some sense resolved by Theorem~\ref{prvi_izrek} and the results of~\cite{bkr-2016}, it is interesting to consider natural generalizations of this class of graphs. One such class are the generalized Sierpi\'{n}ski graphs, whose definition we now recall. 

Let $G$ be an undirected graph on the vertex set $[k]_0$. The \textit{generalized Sierpi\'nski graph} $S^n_G$ {\em of $G$ of dimension $n$} is the graph with vertex set $[k]^{n}_{0}$ and edge set defined as follows: vertices $u, v \in V(S^n_G)$ are adjacent if there exists $i \in \{1, 2, 3, \ldots, n\}$ such that: 
\begin{enumerate}[(i)]
\item $u_j=v_j$ if $j<i$; 
\item $u_i \neq v_i$ and $\{u_i, v_i\} \in E(G)$; 
\item $u_j=v_i$ and $v_j=u_i$ if $j > i$. 
\end{enumerate}

\noindent In other words, for edges of $S^n_G$ the following holds: if $\{u, v\}$ is an edge of $S^n_G$, there is an edge $\{x, y\}$ of $G$ such that the labels of $u$ and $v$ are: $u=\underline{w}xyy\dots y$, $v=\underline{w}yxx\dots x$, where $\underline{w}\in[k]_0^{\ell}$, $0 \leq \ell \leq n-1$. 

The generalized Sierpi\'nski graph $S^n_G$ can also be constructed recursively from $k$ copies of $S^{n-1}_G$. Analogously as for the Sierpi\'nski graphs, for each $j \in [k]_0$  add the label $j$ in front of the labels of all vertices in $S^{n-1}_G$, and denote the obtained graph by $jS^{n-1}_G$. 
Then for any edge $\{a, b\}$ of $G$, add an edge between the vertices $abb \ldots b$ and $baa\ldots a$. 
From this perspective it is clear that only the {\em extreme vertices} (i.e. the vertices $i^{n-1}$) of any $S^{n-1}_G$ can be endvertices of edges between distinct copies of $S^{n-1}_G$ in $S^n_G$.

If $1 \leq d < n$ and $\underline{w} \in [k]_0^d$, then the subgraph of $S^n_G$, induced by the vertices, whose labels begin with $\underline{w}$, is isomorphic to $S^{n-d}_G$. This subgraph is denoted by $\underline{w}S^{n-d}_G$. Note that $S^n_G$ contains $k^d$ pairwise distinct subgraphs $\underline{w}S^{n-d}_G$, for $\underline{w} \in [k]_0^d$.  
In the case $k=4$, a subgraph $i^{n-1}S^1_G$ is called an {\em extreme square}. 
The graph $G$ in the definition of $S^n_G$ is also called the {\em base graph} of the generalized Sierpi\'{n}ski graph.

From Theorem~\ref{prvi_izrek} we derive the following consequence for the generalized Sierpi\'nski graphs $S^n_G$ with graphs $G$ that contains cliques of size at least 4. 
\begin{corollary}
\label{cor:K4}
If a graph $G$ contains a subgraph isomorphic to $K_k$, where $k \geq 4$, then the sequence $(\chi_\rho(S^n_G))_{n \in \NN}$ is unbounded from above.
\end{corollary}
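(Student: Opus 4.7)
The plan is to exhibit $S^n_k$ as an induced subgraph of $S^n_G$ whenever $G$ contains $K_k$, and then combine this with the general fact that packing chromatic number is monotone under taking subgraphs, finally applying Theorem~\ref{prvi_izrek}.

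The first ingredient is subgraph monotonicity: if $H$ is a subgraph of a graph $\Gamma$, then $d_H(u,v) \geq d_\Gamma(u,v)$ for all $u,v \in V(H)$, so the restriction to $V(H)$ of any packing coloring of $\Gamma$ remains a packing coloring of $H$, giving $\pch(H) \leq \pch(\Gamma)$. This is standard, and I would state it as a one-line observation.

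The main step is to identify $S^n_k$ inside $S^n_G$. Without loss of generality, assume the $k$-clique in $G$ is supported on $[k]_0 \subseteq V(G)$. I would consider the subgraph $H$ of $S^n_G$ induced on those vertices whose coordinates all lie in $[k]_0$, and check directly from the recursive edge description of $S^n_G$ that $H \cong S^n_{K_k} = S^n_k$. Indeed, every edge of $S^n_G$ has the form $\{\underline{w}xyy\cdots y,\ \underline{w}yxx\cdots x\}$ with $\underline{w} \in V(G)^\ell$ and $\{x,y\} \in E(G)$; both endpoints lie in $H$ precisely when $\underline{w} \in [k]_0^\ell$ and $x,y \in [k]_0$, in which case $\{x,y\}$ is automatically an edge of $K_k$ since $K_k$ is complete on $[k]_0$. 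Conversely, every edge of $S^n_{K_k}$ arises this way. This gives the desired isomorphism.

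Combining the two steps yields $\pch(S^n_k) \leq \pch(S^n_G)$ for all $n \in \NN$, and Theorem~\ref{prvi_izrek} then forces $(\pch(S^n_G))_{n \in \NN}$ to be unbounded. I do not expect any real obstacle: the only thing requiring care is the subgraph identification, which follows immediately from the recursive definition of the generalized Sierpi\'nski graph.
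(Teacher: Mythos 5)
Your proposal is correct and follows essentially the same route as the paper: both arguments embed $S^n_k$ as a subgraph of $S^n_G$ (the paper verifies this containment by induction on $n$, while you identify the induced subgraph on $[k]_0^n$ directly from the edge description, which is arguably cleaner) and then invoke subgraph monotonicity of $\chi_\rho$ together with Theorem~\ref{prvi_izrek}.
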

\begin{proof}
Let $G$ be a graph and $K_k$ its subgraph, where $k \geq 4$ is a fixed integer, and let the vertices of $K_k$ in $G$ be denoted by $[k]_0$.

Using induction we first show that $S^n_G$ contains a subgraph isomorphic to $S^n_k$ for any $n \geq 1$. 
Since $S^1_G$ is isomorphic to $G$ and $S^1_k$ is isomorphic to $K_k$, it is clear that $S^1_G$ contains a subgraph isomorphic to $S^1_k$. 
Next, by induction hypothesis $S^n_G$ contains a subgraph isomorphic to $S^n_k$. Recall that $S^{n+1}_G$ consists of $|V(G)|$ copies of $S^n_G$ and thus $S^{n+1}_G$ contains at least $k$ copies of subgraphs, isomorphic to $S^n_k$. Consider all such copies of $S^n_k$, which are subgraphs of $0S^n_G$, $1S^n_G$, \ldots, $(k-2)S^n_G$ and $(k-1)S^n_G$. We only need to show that there exist edges in $S^{n+1}_G$ between any two mentioned copies, i.e. between any two vertices $ijj\ldots j$ and $jii\ldots i$, $i \neq j$, $0 \leq i, j\leq k-1$. By the definiton of edges of generalized Sierpi\'nski graph such vertices are adjacent. Therefore $S^{n+1}_G$ contains a subgraph isomorphic to $S^{n+1}_k$ and our claim holds. 

By the previous paragraph, the packing chromatic number of $S^{n}_G$ is at least $\chi_\rho(S^{n}_k)$, $k \geq 4$, and hence by Theorem \ref{prvi_izrek} the sequence $(\chi_\rho(S^n_G))_{n \in \NN}$ is unbounded from above.
\qed
\end{proof}

%
%
\section{Generalized Sierpi\' nski graphs with base graphs on 4 vertices} 
\label{sec:generalized}  

In this section, we study the packing chromatic numbers of $S^n_G$ for all connected graphs $G$ on 4 vertices. There are altogether six such graph, and are shown in Fig.~\ref{fig:grafi}  . Note that the case of $S^n_{K_4}$ was already considered in Section~\ref{sec:base4}.

%
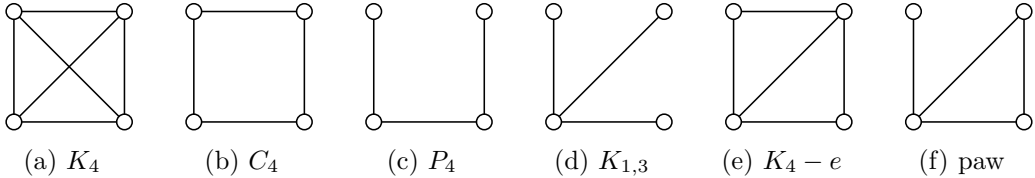
\begin{figure}[htb!]
%
\begin{subfigure}[b]{0.13\textwidth}
       \centering
        \resizebox{\linewidth}{!}{
\begin{tikzpicture}
\def\vr{2pt}
\def\len{1}
\coordinate (x_0) at (0, 1);\coordinate (x_1) at (0, 0);\coordinate (x_2) at (1, 0);\coordinate (x_3) at (1, 1);
\draw (x_1) -- (x_2) -- (x_3) -- (x_0) -- (x_1); \draw (x_0) -- (x_2); \draw (x_1) -- (x_3);
\foreach \i in {0, 1, 2, 3}{
\draw (x_\i)[fill=white]circle(\vr);}
\end{tikzpicture}}
\caption{$K_4$}
\end{subfigure}
\hspace{0.25cm}
%
\begin{subfigure}[b]{0.13\textwidth}
       \centering
        \resizebox{\linewidth}{!}{
\begin{tikzpicture}
\def\vr{2pt}
\def\len{1}
\coordinate (x_0) at (0, 1);\coordinate (x_1) at (0, 0);\coordinate (x_2) at (1, 0);\coordinate (x_3) at (1, 1);
\draw (x_1) -- (x_2) -- (x_3) -- (x_0) -- (x_1);
\foreach \i in {0, 1, 2, 3}{
\draw (x_\i)[fill=white]circle(\vr);}
\end{tikzpicture}}
\caption{$C_4$}
\end{subfigure}
%
%
\hspace{0.25cm}
\begin{subfigure}[b]{0.13\textwidth}
       \centering
        \resizebox{\linewidth}{!}{
\begin{tikzpicture}
\def\vr{2pt}
\def\len{1}
\coordinate (x_0) at (0, 1);\coordinate (x_1) at (0, 0);\coordinate (x_2) at (1, 0);\coordinate (x_3) at (1, 1);
\draw (x_0)--(x_1) -- (x_2) -- (x_3);
\foreach \i in {0, 1, 2, 3}{
\draw (x_\i)[fill=white]circle(\vr);}
\end{tikzpicture}}
\caption{$P_4$}
\end{subfigure}
\hspace{0.25cm}
%
\begin{subfigure}[b]{0.13\textwidth}
       \centering
        \resizebox{\linewidth}{!}{
\begin{tikzpicture}
\def\vr{2pt}
\def\len{1}
\coordinate (x_0) at (0, 1);\coordinate (x_1) at (0, 0);\coordinate (x_2) at (1, 0);\coordinate (x_3) at (1, 1);
\draw (x_1) -- (x_2); \draw (x_1) -- (x_3); \draw (x_0) -- (x_1);
\foreach \i in {0, 1, 2, 3}{
\draw (x_\i)[fill=white]circle(\vr);}
\end{tikzpicture}}
\caption{$K_{1,3}$}
\end{subfigure}
%
%
\hspace{0.25cm}
\begin{subfigure}[b]{0.13\textwidth}
       \centering
        \resizebox{\linewidth}{!}{
\begin{tikzpicture}
\def\vr{2pt}
\def\len{1}
\coordinate (x_0) at (0, 1);\coordinate (x_1) at (0, 0);\coordinate (x_2) at (1, 0);\coordinate (x_3) at (1, 1);
\draw (x_1) -- (x_2) -- (x_3) -- (x_0) -- (x_1); \draw (x_1) -- (x_3);
\foreach \i in {0, 1, 2, 3}{
\draw (x_\i)[fill=white]circle(\vr);}
\end{tikzpicture}}
\caption{$K_4-e$}
\end{subfigure}
\hspace{0.25cm}
\begin{subfigure}[b]{0.13\textwidth}
       \centering
        \resizebox{\linewidth}{!}{
\begin{tikzpicture}
\def\vr{2pt}
\def\len{1}
\coordinate (x_0) at (0, 1);\coordinate (x_1) at (0, 0);\coordinate (x_2) at (1, 0);\coordinate (x_3) at (1, 1);
\draw (x_1) -- (x_2) -- (x_3) -- (x_1); \draw (x_0) -- (x_1);
\foreach \i in {0, 1, 2, 3}{
\draw (x_\i)[fill=white]circle(\vr);}
\end{tikzpicture}}
\caption{paw}
\end{subfigure}
\caption{All connected graphs on $4$ vertices.}
\label{fig:grafi}
\end{figure}

Before determining $\pch(S^n_{C_4})$ we need two lemmas.  

\begin{lemma}
The packing chromatic number of the graph $H$ in Fig.~\ref{fig:H*} is at least 5. 
\label{lemmaH}
\end{lemma}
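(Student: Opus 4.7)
The plan is to argue by contradiction. Suppose that $H$ admits a packing $4$-coloring $c:V(H)\to[4]$, and for each $i\in[4]$ set $V_i=c^{-1}(i)$. The strategy is to combine the $i$-packing constraints on the $V_i$ with the distance structure of $H$ to show that the color classes cannot collectively cover $V(H)$.

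First I would list the vertices of $H$, compute $\diam(H)$, and, for each $i\in\{2,3,4\}$, determine the maximum possible size of an $i$-packing in $H$. These combinatorial parameters yield immediate upper bounds $|V_i|\le p_i(H)$, where $p_i(H)$ is the $i$-packing number. In particular, when the diameter is small enough (say $\diam(H)\le 4$), one typically gets $|V_4|\le 1$, and a correspondingly small bound on $|V_3|$ by inspection of which vertex pairs lie at distance at least $4$.

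The heart of the argument is then a case analysis on the placement of the ``large'' color classes $V_3\cup V_4$, using the automorphisms of $H$ to collapse equivalent cases. For each essentially distinct configuration of $V_3\cup V_4$, I would identify the residual vertex set $U=V(H)\setminus(V_3\cup V_4)$ and test whether $U$ admits a partition into an independent set (to serve as $V_1$) and a $2$-packing (to serve as $V_2$). Typically one finds a small set of vertices of $U$ that are forced to lie in $V_2$ because they are already within distance $2$ of any candidate color-$2$ vertex elsewhere, and then the remaining vertices of $U$ contain an adjacent pair that cannot be accommodated in $V_1$.

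The main obstacle will be ensuring completeness of this case analysis without a combinatorial explosion. I would rely on the symmetry group of $H$ to reduce the configurations of $V_3\cup V_4$ to a handful, and handle each by a direct distance computation showing that no valid choice of $V_1$ and $V_2$ exists on $U$. The contradiction in every case yields $\pch(H)\ge 5$, as claimed.
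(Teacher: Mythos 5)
Your proposal is a strategy outline rather than a proof: no case is actually carried out, and the quantitative premises on which the plan rests are false for this particular graph $H$. The graph $H$ is a tree on $22$ vertices with diameter $8$--$9$ (e.g.\ $d(x_7,x_{12})=9$), not $\le 4$, so your expectation that $|V_4|\le 1$ fails badly: the four leaves $x_1,x_2,x_{13},x_{16}$ already form a $4$-packing, and $3$-packings are larger still. Consequently $V_3\cup V_4$ is not a small set whose placements can be reduced ``to a handful'' of configurations; the automorphism group of $H$ is tiny (generated by the left--right reflection and the transpositions $x_{13}\leftrightarrow x_{14}$, $x_{15}\leftrightarrow x_{16}$), so it cannot collapse the very large number of admissible choices of $V_3\cup V_4$. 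The combinatorial explosion you flag as ``the main obstacle'' is therefore not avoided by your plan --- it is exactly where the proof would have to live, and none of it is done. A purely global counting bound also does not suffice here (unlike, say, the paper's argument for $S^2_{K_4-e}$), because $|V_1|$ can be as large as $12$ and the packing numbers $p_2,p_3,p_4$ together exceed the remaining $10$ vertices; the obstruction is structural, not numerical.

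The paper's proof goes a different, local route that you should compare against: it fixes attention on the vertices $a_1,c_1$ (and by symmetry $a_2,c_2$) and splits into cases according to which of them receive color $1$. In each case the colors of nearby vertices ($b_1$, $x_4$, $c_2$, $x_5$, $x_8$, $x_9$, $x_{10}$, $x_{11}$, $x_{13}$, $x_{14}$, \dots) are successively forced, using short local distance counts such as ``the five vertices of $N(a_1)\cup N(c_1)$ admit colors $2$ and $3$ at most twice each and color $4$ at most once,'' until some vertex has no color in $\{1,2,3,4\}$ available. This forcing propagation keeps the case tree small and each step verifiable by hand. If you want to salvage your global approach, you would need to actually compute the $i$-packing numbers of $H$, enumerate the feasible $V_3\cup V_4$ configurations up to the (small) symmetry group, and check each residual set --- a computation far larger than the one sketched, and one your write-up does not perform.
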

\begin{proof} Suppose to the contrary that $\chi_{\rho}(H) \leq 4$. Let be $c$ be an arbitrary $4$-packing coloring of a graph $H$. Consider the following possibilities for vertices $a_1$ and $c_1$ of $H$ to be colored by color $1$ or not. 

\begin{figure}[htb!]
\begin{center}
\begin{tikzpicture}
\def\vr{3pt}
\def\len{1}
	
\coordinate(x_1) at (0, 0); 
\coordinate(a_1) at (1, 0); 
\coordinate(b_1) at (2, 0); 
\coordinate(c_1) at (3, 0); 
\coordinate(c_2) at (4, 0); 
\coordinate(b_2) at (5, 0); 
\coordinate(a_2) at (6, 0); 
\coordinate(x_2) at (7, 0);

\coordinate(x_3) at (1, 1); 
\coordinate(x_4) at (3, 1); 
\coordinate(x_5) at (4, 1); 
\coordinate(x_6) at (6, 1);

\coordinate(x_7) at (1, 2); 
\coordinate(x_8) at (2, 2); 
\coordinate(x_14) at (2, 3); 
\coordinate(x_13) at (1, 3); 
\coordinate(x_9) at (3, 2);

\coordinate(x_10) at (4, 2); 
\coordinate(x_12) at (6, 2); 
\coordinate(x_15) at (5, 3); 
\coordinate(x_16) at (6, 3); 
\coordinate(x_11) at (5, 2);

\draw (x_1) -- (a_1) -- (b_1) -- (c_1) -- (c_2) -- (b_2) -- (a_2) -- (x_2);
\draw (a_1) -- (x_3) -- (x_7);
\draw (c_1) -- (x_4) -- (x_8) -- (x_13);
\draw (x_8) -- (x_14);
\draw (x_4) -- (x_9);

\draw (a_2) -- (x_6) -- (x_12);
\draw (c_2) -- (x_5) -- (x_11) -- (x_16);
\draw (x_11) -- (x_15);
\draw (x_5) -- (x_10);

\foreach \i in {1, 2, 3, 4, 5, 6, 7, 8, 9, 10, 11, 12, 13, 14, 15, 16} {\draw(x_\i)[fill=white] circle(\vr);}

\foreach \i in {1, 2} {\draw(a_\i)[fill=white] circle(\vr);
\draw(a_\i) node[below] {$a_\i$};
}
\foreach \i in {1, 2} {\draw(b_\i)[fill=white] circle(\vr);
\draw(b_\i) node[below] {$b_\i$};
}
\foreach \i in {1, 2} {\draw(c_\i)[fill=white] circle(\vr);
\draw(c_\i) node[below] {$c_\i$};
}

\draw(x_1) node[below] {$x_1$}; 
\draw(x_2) node[below] {$x_2$};

\draw(x_3) node[left] {$x_3$}; 
\draw(x_4) node[left] {$x_4$}; 
\draw(x_7) node[left] {$x_7$}; 
\draw(x_8) node[left] {$x_8$}; 
\draw(x_9) node[left] {$x_9$}; 
\draw(x_13) node[left] {$x_{13}$}; 
\draw(x_14) node[left] {$x_{14}$};

\draw(x_5) node[right] {$x_5$};
\draw(x_6) node[right] {$x_6$};
\draw(x_10) node[right] {$x_{10}$};
\draw(x_11) node[right] {$x_{11}$};
\draw(x_12) node[right] {$x_{12}$};
\draw(x_15) node[right] {$x_{15}$};
\draw(x_16) node[right] {$x_{16}$};

\end{tikzpicture}
\end{center}
\caption{Graph $H$}
\label{fig:H*}
\end{figure}

\textbf{Case 1.} $c(a_1)=c(c_1)=1$. \\
Since $\chi_{\rho}(H) \leq 4$, all five vertices in $N(a_1) \cup N(c_1)$ can be colored only by colors $2$, $3$ or $4$. Considering the distances between these vertices, colors $2$ and $3$ can be used only twice and color $4$ only once in any packing coloring. Actually, colors $2$ and $3$ are certainly used exactly twice and color $4$ exactly once, but this is possible only if $c(b_1)=4$. 
If $c(c_2)=2$, then $c(x_4)=3$ and $c(x_5)=1$. Only one of the vertices $x_{10}$ and $x_{11}$ can be colored by color $3$, but then for the other there is no available color from $\{1, 2, 3, 4\}$. Hence $c(c_2) \neq 2$, but $c(c_2)=3$, and then $c(x_4)=2$. Vertices $x_8$ and $x_9$ are colored by color 1 and only one of the vertices $x_{13}$ or $x_{14}$ can get color $3$, but the other cannot be colored by colors from $\{1, 2, 3, 4\}$. Therefore $c(c_2) \neq 3$ and thus $c_2$ cannot be colored by any of the colors from $\{1, 2, 3, 4\}$. Therefore $c(a_1)=c(c_1)=1$ yields a contradiction to the assumption, that $\chi_{\rho}(H) \leq 4$. 

\textbf{Case 2.} $c(c_1)=1$, $c(a_1) \neq 1$.\\
Since $\chi_{\rho}(H) \leq 4$ and $c_1$ has $3$ neighbours, each of them gets one of the colors $2$, $3$ or $4$. 
With respect to the Case $1$ and considering the distances between vertex $a_1$ and neighbours of vertex $c_1$, vertex $a_1$ cannot be colored by any of the available colors, if $c(b_1)=2$. 
Suppose then $c(b_1)=4$. If $c(x_4)=2$, $x_8$ and $x_9$ are both colored by color $1$, and then one of the vertices $x_{13}$ or $x_{14}$ cannot be colored by any of the available colors. If $c(x_4)=3$, then $c(c_2)=2$ and $c(x_5)=1$. But then there is no available color for one of the vertices $x_{10}$ and $x_{11}$. 
Therefore $c(b_1)=3$. If $c(x_4)=2$, we have the analogous situation as above (when $c(b_1)=4$ and $c(x_4)=2$), so this is not possible. Then $c(x_4)=4$ and $c(c_2)=2$, but then we have the same situation as in the case when $c(b_1)=4$ and $c(c_2)=2$. 
Again in each case we get a contradition to our assumption, so $c(c_1) \neq 1$. \\
By symmetry, we derive from the above two cases, that also $c(c_2) \neq 1$. 

\textbf{Case 3.} $c(a_1)=1$, $c(c_1) \neq 1$. \\
In this case each of the vertices $x_1$, $x_3$ and $b_1$ gets one of the colors $2$, $3$, or $4$ and the same holds also for vertices $c_1$ and $c_2$ by the Case $2$. For the packing coloring of vertices $x_1$, $x_3$, $b_1$ and $c_1$ color $2$ is used exactly twice and hence $c(c_1)=2$. Therefore vertices $b_1$ and $c_2$ get colors $3$ and $4$. Then $c(x_4)=1$. But there are no available colors for vertices $x_8$ and $x_9$. Therefore $c(a_1) \neq 1$. \\
By symmetry and by the Case $2$, also $c(a_2) \neq 1$. 

\textbf{Case 4.} $c(a_1) \neq 1$, $c(c_1) \neq 1$, $c(c_2) \neq 1$, $c(a_2) \neq 1$. 	\\	
Suppose $c(c_1)=4$ and $c(c_2)=2$. Then $c(a_2)=3$ and  $c(x_5)=1$. Only one of the vertices $x_{10}$ and $x_{11}$ can be colored by color $3$, but then the other cannot get any of the available colors, so in this case there does not exist $4$-packing coloring of $H$.
Let $c(c_1)=4$ and $c(c_2)=3$. Vertex $x_5$ can get colors $1$ or $2$. If it is colored by color $1$, then one of the vertices $x_{10}$ or $x_{11}$ is colored by color $2$, but the other cannot get any of the available colors. If $c(x_5)=2$, then $c(x_{11})=1$ and there is no available colors for vertices $x_{15}$ and $x_{16}$. 
Since $c(c_1)$ and $c(c_2)$ cannot be colored by color 4 (for $c_2$ the proof is analogous), without loss of generality we may assume that $c(c_1)=2$ and $c(c_2)=3$. Then $c(a_1)=4$ and $c(x_4)=1$. Vertices $x_8$ and $x_9$ cannot be colored by colors from $\{1, 2, 3, 4\}$. Hence also in this case $c$ cannot be $4$-packing coloring of $H$. 

In conclusion, regardless of which vertices of graph $H$ are colored by color $1$, $H$ cannot be properly $4$-packing colored, hence $\chi_{\rho}(H) >4$. 
\qed
\end{proof}


\begin{figure}[h]
\begin{center}
\begin{tikzpicture}
\def\vr{3pt}
\def\len{1}
\foreach \i in {0, 1, 2, 3, 4, 5, 6, 7}{
\coordinate(x_\i) at (\i, 0);}

\draw (x_0) -- (x_3);
\draw (x_4) -- (x_7);
\draw (5, 0) arc  (0:180:1.5);

\foreach \i in {0, 1, 2, 3, 4, 5, 6, 7}{
\draw(x_\i)[fill=white] circle(\vr);}

\foreach \i in {0, 1, 2, 3, 4, 5, 6, 7} 
{\draw(x_\i) node[below] {$x_\i$};}

\end{tikzpicture}
\end{center}
\caption{Graph $H'$} 
\label{fig:subgraph2}
\end{figure}


\begin{lemma}
The packing chromatic number of the graph $H'$, shown in Fig.~\ref{fig:subgraph2}, is at least 4. 
\label{lemmaH'}
\end{lemma}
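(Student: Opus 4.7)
The plan is to argue by contradiction: assume $H'$ admits a $3$-packing coloring $c$ using only the colors $1, 2, 3$, and derive an impossibility by exploiting the two degree-$3$ vertices $x_2$ and $x_5$. The key structural observation is that the three neighbours of $x_2$, namely $x_1, x_3, x_5$, are pairwise at distance exactly $2$ in $H'$, and by symmetry the neighbours $x_2, x_4, x_6$ of $x_5$ are also pairwise at distance $2$.

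First I would rule out $c(x_2)=1$. If this held, then $x_1, x_3, x_5$ would all need colors in $\{2,3\}$; but since any two of them lie at distance $2$, at most one of them can be assigned color $2$ (which requires mutual distance $>2$) and at most one color $3$ (which requires even more), leaving at least one neighbour with no legal color. By the symmetric argument $c(x_5)\ne 1$. Because $x_2x_5$ is an edge we conclude $\{c(x_2),c(x_5)\}=\{2,3\}$, and the automorphism $x_i\mapsto x_{7-i}$, which swaps the two $P_4$-halves of $H'$ and interchanges $x_2$ with $x_5$, lets me assume without loss of generality that $c(x_2)=2$ and $c(x_5)=3$.

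Next I would finish via two small eccentricity computations from $x_2$ and $x_5$. From $x_5$ no vertex of $H'$ lies at distance greater than $3$ (the path $x_5, x_2, x_1, x_0$ attains the eccentricity), so no vertex other than $x_5$ itself can take color $3$. From $x_2$ the only vertex at distance at least $3$ is $x_7$, so color $2$ can be used only on the set $\{x_2, x_7\}$. These two observations together force each of $x_0, x_1, x_3, x_4, x_6$ to receive color $1$, which contradicts the adjacency of $x_0$ and $x_1$. Hence no $3$-packing coloring of $H'$ exists and $\chi_\rho(H')\ge 4$.

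I do not expect any step to be a real obstacle: the argument is a short case analysis whose work is absorbed by the small diameter of $H'$ (equal to $5$) and by the tight neighbourhood structure around the two cut-like vertices $x_2$ and $x_5$. The only routine care needed is the verification of the distances from $x_2$ and $x_5$, which is straightforward because $x_2x_5$ is the unique edge linking the two $P_4$'s and therefore lies on every shortest path between the two sides.
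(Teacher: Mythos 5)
Your proposal is correct and follows essentially the same route as the paper's proof: both arguments first exclude color $1$ on $x_2$ and $x_5$ via the pairwise distance-$2$ neighbourhoods, reduce to $c(x_2)=2$, $c(x_5)=3$ by symmetry, and then obtain the contradiction from the adjacent pair $x_0,x_1$, neither of which can legally receive color $2$ or $3$. Your version merely spells out the eccentricity computations and the automorphism $x_i\mapsto x_{7-i}$ more explicitly than the paper does.
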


\begin{proof} Suppose to the contrary that $c$ is a $3$-packing coloring of $H'$. 
If $c(x_2)=1$, then all three of its neighbours are colored by colors $2$ and $3$, which is not possible since the distance between any two of them is $2$. Therefore $c(x_2) \neq 1$ and by symmetry also $c(x_5) \neq 1$. Without loss of generality suppose $c(x_2)=2$ and $c(x_5)=3$. Then only one of the vertices $x_0$ and $x_1$ can be colored by color $1$, but the other cannot get any of the available colors. Hence $\chi_{\rho}(H')>3$.
\qed
\end{proof}


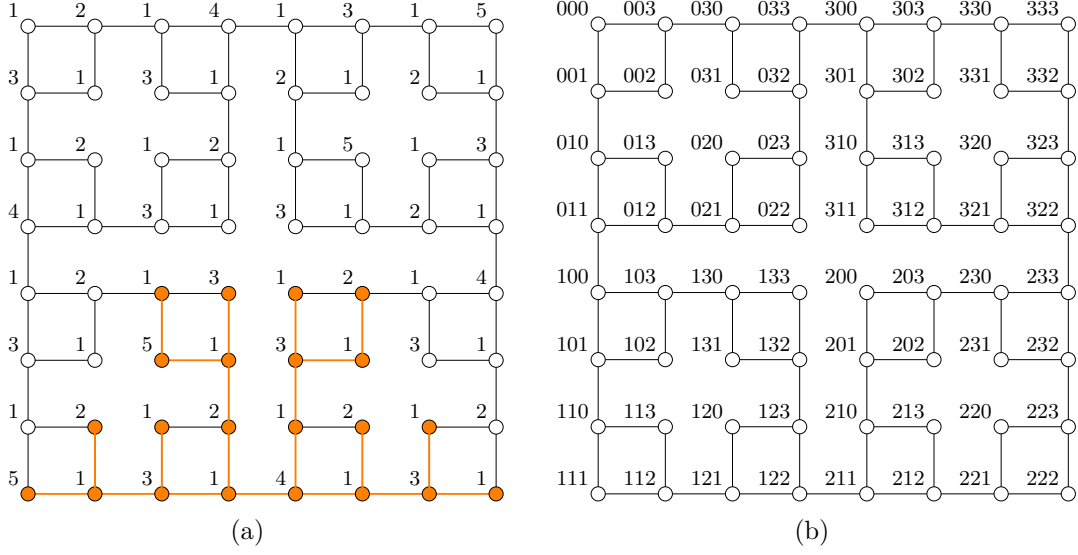
\begin{figure}[h]
\begin{subfigure}[b]{0.47\textwidth}
        \centering
        \resizebox{\linewidth}{!}{
\begin{tikzpicture}
\def\vr{3pt}
\def\len{1}
\foreach \i in {0, 1, 2, 3, 4, 5, 6, 7}{
\coordinate(x_\i) at (\i, 0);
\coordinate(y_\i) at (\i, 1);
\coordinate(z_\i) at (\i, 2);
\coordinate(w_\i) at (\i, 3);
\coordinate(u_\i) at (\i, 4);
\coordinate(v_\i) at (\i, 5);
\coordinate(o_\i) at (\i, 6);
\coordinate(p_\i) at (\i, 7);
}
\foreach \i in {0, 1, 2, 3, 4, 5, 6, 7}{
\draw (u_\i) -- (v_\i);
\draw (o_\i) -- (p_\i);
\draw (x_\i) -- (y_\i);
\draw (z_\i) -- (w_\i);
}
\draw (y_0) -- (z_0); \draw (y_3) -- (z_3); \draw (y_4) -- (z_4);\draw (y_7) -- (z_7);
\draw (v_0) -- (o_0); \draw (v_3) -- (o_3); \draw (v_4) -- (o_4); \draw (v_7) -- (o_7);
\draw (w_0) -- (u_0); \draw (w_7) -- (u_7);
\draw[orange, thick] (x_0) -- (x_1); \draw[orange, thick] (x_1) -- (x_2); \draw[orange, thick] (x_2) -- (x_3); \draw[orange, thick] (x_3) -- (x_4); \draw[orange, thick] (x_4) -- (x_5); \draw[orange, thick] (x_5) -- (x_6); \draw[orange, thick] (x_6) -- (x_7);
\draw (p_0) -- (p_1); \draw (p_1) -- (p_2); \draw (p_2) -- (p_3); \draw (p_3) -- (p_4); \draw (p_4) -- (p_5); \draw (p_5) -- (p_6); \draw (p_6) -- (p_7);
\draw (z_0) -- (z_1); \draw[orange, thick]  (z_2) -- (z_3); \draw[orange, thick]  (z_4) -- (z_5); \draw (z_6) -- (z_7);
\draw (y_0) -- (y_1);\draw (y_2) -- (y_3); \draw (y_4) -- (y_5); \draw (y_6) -- (y_7);
\draw (v_0) -- (v_1); \draw (v_2) -- (v_3); \draw (v_4) -- (v_5); \draw (v_6) -- (v_7);
\draw (u_0) -- (u_1); \draw (u_1) -- (u_2); \draw (u_2) -- (u_3); \draw (u_4) -- (u_5); \draw (u_5) -- (u_6); \draw (u_6) -- (u_7);
\draw (w_0) -- (w_1); \draw (w_1) -- (w_2); \draw (w_2) -- (w_3); \draw (w_4) -- (w_5); \draw (w_5) -- (w_6); \draw (w_6) -- (w_7);
\draw (o_0) -- (o_1); \draw (o_2) -- (o_3); \draw (o_4) -- (o_5); \draw (o_6) -- (o_7);
\foreach \i in {0, 1, 2, 3, 4, 5, 6, 7}{
\draw(x_\i)[fill=orange] circle(\vr);
\draw(u_\i)[fill=white] circle(\vr);
\draw(v_\i)[fill=white] circle(\vr);
\draw(o_\i)[fill=white] circle(\vr);
\draw(p_\i)[fill=white] circle(\vr);
}
\draw (y_0)[fill=white] circle(\vr);
\draw (y_7)[fill=white] circle(\vr);
\foreach \i in {1, 2, 3, 4, 5, 6}{
\draw(y_\i)[fill=orange] circle(\vr);}
\draw(z_0)[fill=white] circle(\vr);
\draw(z_1)[fill=white] circle(\vr);
\draw(z_6)[fill=white] circle(\vr);
\draw(z_7)[fill=white] circle(\vr);
\draw(z_2)[fill=orange] circle(\vr);
\draw(z_3)[fill=orange] circle(\vr);
\draw(z_4)[fill=orange] circle(\vr);
\draw(z_5)[fill=orange] circle(\vr);
\draw(w_0)[fill=white] circle(\vr);
\draw(w_1)[fill=white] circle(\vr);
\draw(w_6)[fill=white] circle(\vr);
\draw(w_7)[fill=white] circle(\vr);
\draw(w_3)[fill=orange] circle(\vr);
\draw(w_4)[fill=orange] circle(\vr);
\draw(w_5)[fill=orange] circle(\vr);
\draw(w_2)[fill=orange] circle(\vr);
\draw[orange, thick] (y_3) -- (z_3);
\draw[orange, thick] (y_4) -- (z_4);
\draw[orange, thick] (z_2) -- (w_2);
\draw[orange, thick] (z_3) -- (w_3);
\draw[orange, thick] (z_4) -- (w_4);
\draw[orange, thick] (z_5) -- (w_5);
\draw[orange, thick] (x_1) -- (y_1);
\draw[orange, thick] (x_2) -- (y_2);
\draw[orange, thick] (x_3) -- (y_3);
\draw[orange, thick] (x_4) -- (y_4);
\draw[orange, thick] (x_5) -- (y_5);
\draw[orange, thick] (x_6) -- (y_6);
%
\draw(x_0) node[above left] {\footnotesize{5}}; \draw(z_2) node[above left] {\footnotesize{5}}; \draw(v_5) node[above left] {\footnotesize{5}}; \draw(p_7) node[above left] {\footnotesize{5}};
\draw(x_4) node[above left] {\footnotesize{4}}; \draw(w_7) node[above left] {\footnotesize{4}}; \draw(u_0) node[above left] {\footnotesize{4}}; \draw(p_3) node[above left] {\footnotesize{4}};
\draw(x_2) node[above left] {\footnotesize{3}}; \draw(x_6) node[above left] {\footnotesize{3}}; \draw(z_0) node[above left] {\footnotesize{3}}; \draw(z_4) node[above left] {\footnotesize{3}}; \draw(z_6) node[above left] {\footnotesize{3}}; \draw(w_3) node[above left] {\footnotesize{3}}; \draw(u_2) node[above left] {\footnotesize{3}}; \draw(u_4) node[above left] {\footnotesize{3}}; \draw(v_7) node[above left] {\footnotesize{3}}; \draw(o_0) node[above left] {\footnotesize{3}}; \draw(o_2) node[above left] {\footnotesize{3}}; \draw(p_5) node[above left] {\footnotesize{3}};
\foreach \i in {1, 3, 5, 7}{
\draw(y_\i) node[above left] {\footnotesize{2}}; }
\draw(w_1) node[above left] {\footnotesize{2}};
\draw(w_5) node[above left] {\footnotesize{2}};
\draw(u_6) node[above left] {\footnotesize{2}};
\draw(v_1) node[above left] {\footnotesize{2}};
\draw(v_3) node[above left] {\footnotesize{2}};
\draw(o_4) node[above left] {\footnotesize{2}};
\draw(o_6) node[above left] {\footnotesize{2}};
\draw(p_1) node[above left] {\footnotesize{2}};
\foreach \i in {1, 3, 5, 7}{
\draw(x_\i) node[above left] {\footnotesize{1}}; 
\draw(z_\i) node[above left] {\footnotesize{1}};
\draw(o_\i) node[above left] {\footnotesize{1}}; }
\draw(u_1) node[above left] {\footnotesize{1}}; 
\draw(u_3) node[above left] {\footnotesize{1}}; 
\draw(u_5) node[above left] {\footnotesize{1}}; 
\draw(u_7) node[above left] {\footnotesize{1}}; 
\foreach \i in {0, 2, 4, 6}{
\draw(y_\i) node[above left] {\footnotesize{1}}; 
\draw(v_\i) node[above left] {\footnotesize{1}};
\draw(p_\i) node[above left] {\footnotesize{1}}; }
\draw(w_0) node[above left] {\footnotesize{1}}; 
\draw(w_2) node[above left] {\footnotesize{1}};
\draw(w_4) node[above left] {\footnotesize{1}}; 
\draw(w_6) node[above left] {\footnotesize{1}};
\end{tikzpicture}}
\caption{}
\end{subfigure}
%
%
\hspace*{0.3cm}
\begin{subfigure}[b]{0.5\textwidth}
        \centering
        \resizebox{\linewidth}{!}{
\begin{tikzpicture}
\def\vr{3pt}
\def\len{1}
\foreach \i in {0, 1, 2, 3, 4, 5, 6, 7}{
\foreach \j in {0, 1, 2, 3, 4, 5, 6, 7}{
\coordinate (x^\j_\i) at (\i, \j); }}
\foreach \i in {0, 1, 2, 3}{
\foreach \j in {0, 1, 2, 3, 4, 5, 6, 7}{
\draw (x^\j_\the\numexpr2*\i\relax) -- (x^\j_\the\numexpr2*\i+1\relax); 
\draw (x^\the\numexpr2*\i\relax_\j) -- (x^\the\numexpr2*\i+1\relax_\j); 
}}
\foreach \i in {0, 1}{
\foreach \j in {0, 1}{
\draw (x^\the\numexpr4*\i+1\relax_\the\numexpr3*\j\relax) -- (x^\the\numexpr4*\i+2\relax_\the\numexpr3*\j\relax); 
\draw (x^\the\numexpr4*\i+1\relax_\the\numexpr3*\j+4\relax) -- (x^\the\numexpr4*\i+2\relax_\the\numexpr3*\j+4\relax); 
\draw (x^\the\numexpr3*\i\relax_\the\numexpr4*\j+1\relax) -- (x^\the\numexpr3*\i\relax_\the\numexpr4*\j+2\relax) ; 
\draw (x^\the\numexpr3*\i+4\relax_\the\numexpr4*\j+1\relax) -- (x^\the\numexpr3*\i+4\relax_\the\numexpr4*\j+2\relax) ; 
}}
\draw(x^0_3) -- (x^0_4); \draw(x^7_3) -- (x^7_4); \draw(x^3_0) -- (x^4_0); \draw(x^3_7) -- (x^4_7);
\foreach \i in {0, 1, 2, 3, 4, 5, 6, 7}{
\foreach \j in {0, 1, 2, 3, 4, 5, 6, 7}{
\draw(x^\j_\i)[fill=white] circle(\vr); }}
\draw(x^4_4)node[above left]{\footnotesize{311}}; \draw(x^4_5)node[above left]{\footnotesize{312}}; \draw(x^4_6)node[above left]{\footnotesize{321}}; \draw(x^4_7)node[above left] {\footnotesize{322}};
\draw(x^5_4)node[above left]{\footnotesize{310}}; \draw(x^5_5)node[above left]{\footnotesize{313}}; \draw(x^5_6)node[above left] {\footnotesize{320}}; \draw(x^5_7)node[above left] {\footnotesize{323}};
\draw(x^6_4)node[above left]{\footnotesize{301}}; \draw(x^6_5)node[above left]{\footnotesize{302}}; \draw(x^6_6)node[above left] {\footnotesize{331}}; \draw(x^6_7)node[above left] {\footnotesize{332}};
\draw(x^7_4)node[above left]{\footnotesize{300}}; \draw(x^7_5)node[above left]{\footnotesize{303}}; \draw(x^7_6)node[above left] {\footnotesize{330}}; \draw(x^7_7)node[above left] {\footnotesize{333}};
%
\draw(x^4_0)node[above left]{\footnotesize{011}}; \draw(x^4_1)node[above left]{\footnotesize{012}}; \draw(x^4_2)node[above left] {\footnotesize{021}}; \draw(x^4_3)node[above left] {\footnotesize{022}};
\draw(x^5_0)node[above left]{\footnotesize{010}}; \draw(x^5_1)node[above left]{\footnotesize{013}}; \draw(x^5_2)node[above left] {\footnotesize{020}}; \draw(x^5_3)node[above left] {\footnotesize{023}};
\draw(x^6_0)node[above left]{\footnotesize{001}}; \draw(x^6_1)node[above left]{\footnotesize{002}}; \draw(x^6_2)node[above left] {\footnotesize{031}}; \draw(x^6_3)node[above left] {\footnotesize{032}};
\draw(x^7_0)node[above left]{\footnotesize{000}}; \draw(x^7_1)node[above left]{\footnotesize{003}}; \draw(x^7_2)node[above left] {\footnotesize{030}}; \draw(x^7_3)node[above left] {\footnotesize{033}};
%
\draw(x^0_4)node[above left]{\footnotesize{211}}; \draw(x^0_5)node[above left]{\footnotesize{212}}; \draw(x^0_6)node[above left] {\footnotesize{221}}; \draw(x^0_7)node[above left] {\footnotesize{222}};
\draw(x^1_4)node[above left]{\footnotesize{210}}; \draw(x^1_5)node[above left]{\footnotesize{213}}; \draw(x^1_6)node[above left] {\footnotesize{220}}; \draw(x^1_7)node[above left] {\footnotesize{223}};
\draw(x^2_4)node[above left]{\footnotesize{201}}; \draw(x^2_5)node[above left]{\footnotesize{202}}; \draw(x^2_6)node[above left] {\footnotesize{231}}; \draw(x^2_7)node[above left] {\footnotesize{232}};
\draw(x^3_4)node[above left]{\footnotesize{200}}; \draw(x^3_5)node[above left]{\footnotesize{203}}; \draw(x^3_6)node[above left] {\footnotesize{230}}; \draw(x^3_7)node[above left] {\footnotesize{233}};
%
\draw(x^0_0)node[above left]{\footnotesize{111}}; \draw(x^0_1)node[above left]{\footnotesize{112}}; \draw(x^0_2)node[above left]{\footnotesize{121}}; \draw(x^0_3)node[above left] {\footnotesize{122}};
\draw(x^1_0)node[above left]{\footnotesize{110}}; \draw(x^1_1)node[above left]{\footnotesize{113}}; \draw(x^1_2)node[above left]{\footnotesize{120}}; \draw(x^1_3)node[above left] {\footnotesize{123}};
\draw(x^2_0)node[above left]{\footnotesize{101}}; \draw(x^2_1)node[above left]{\footnotesize{102}}; \draw(x^2_2)node[above left]{\footnotesize{131}}; \draw(x^2_3)node[above left] {\footnotesize{132}};
\draw(x^3_0)node[above left]{\footnotesize{100}}; \draw(x^3_1)node[above left]{\footnotesize{103}}; \draw(x^3_2)node[above left] {\footnotesize{130}}; \draw(x^3_3)node[above left]{\footnotesize{133}};
\end{tikzpicture}
}
\caption{}
\end{subfigure}
\caption{(a) A 5-packing coloring of $S^3_{C_4}$ with graph $H$ as its subgraph in orange; (b) the labeling of $S^3_{C_4}$.}
\label{fig:C_4}
\end{figure}

\begin{theorem}
If $n\geq 1$ and $S^n_{C_4}$ is the generalized Sierpi\'nski graph of $C_4$ of dimension $n$, then
$$\chi_{\rho}(S^n_{C_4}) =
\left\{\begin{array}{ll}
3; & n=1\,,\\
4; & n=2\,,\\
5; & n \geq 3\,.
\end{array}\right.$$
\label{theoremC_4}
\end{theorem}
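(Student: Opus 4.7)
The plan is to handle the three regimes $n=1$, $n=2$, and $n\ge 3$ separately, in each case proving matching upper and lower bounds, and to use throughout the standard fact that $\chi_{\rho}(H)\le\chi_{\rho}(G)$ whenever $H$ is a subgraph of $G$. For $n=1$, $S^1_{C_4}\cong C_4$ has diameter $2$, so every color $i\ge 2$ can be used at most once in a packing coloring, and since the largest independent set of $C_4$ has only two vertices, at least three colors are needed; the assignment $(1,2,1,3)$ around the cycle gives a matching upper bound.

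For $n=2$, the lower bound $\chi_{\rho}(S^2_{C_4})\ge 4$ follows from Lemma~\ref{lemmaH'} once one exhibits $H'$ from Fig.~\ref{fig:subgraph2} as a subgraph of $S^2_{C_4}$: take two copies $iS^1_{C_4}$ and $jS^1_{C_4}$ with $\{i,j\}\in E(C_4)$, choose in each a $P_4$ passing through the vertex $ij$ (respectively $ji$) at the appropriate middle position, and use the inter-copy edge $\{ij,ji\}$ as the ``arc'' of $H'$. The upper bound is obtained by presenting an explicit $4$-packing coloring of the $16$ vertices of $S^2_{C_4}$ and checking the packing conditions directly.

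For $n\ge 3$, the lower bound $\chi_{\rho}(S^n_{C_4})\ge 5$ comes from Lemma~\ref{lemmaH}: the graph $H$ of Fig.~\ref{fig:H*} embeds into $S^3_{C_4}$ as the orange subgraph highlighted in Fig.~\ref{fig:C_4}(a), and $S^3_{C_4}$ is a subgraph of every $S^n_{C_4}$ with $n\ge 3$, so the bound propagates. For the upper bound $\chi_{\rho}(S^n_{C_4})\le 5$ I would argue by induction on $n$, with base case $n=3$ given by the coloring of Fig.~\ref{fig:C_4}(a). In the inductive step, I would build a $5$-packing coloring of $S^n_{C_4}$ from four copies (possibly with permuted colors) of a chosen $5$-packing coloring of $S^{n-1}_{C_4}$, one copy in each $iS^{n-1}_{C_4}$, arranging the permutations so that for every edge $\{i,j\}$ of $C_4$ the endpoints of the cross-edge $\{ij^{n-1},ji^{n-1}\}$ carry colors admissible at distance $1$, and so that any two vertices in distinct copies sharing a color $i\in\{2,3,4,5\}$ remain at distance strictly greater than $i$ in the glued graph.

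The hard part is this inductive step: colors $4$ and $5$ are used only a few times in the base coloring, so after gluing four copies their sparse occurrences might in principle create short-distance conflicts across copies. My plan is to choose the base coloring so that colors $4$ and $5$ sit far from all extreme vertices of $S^{n-1}_{C_4}$; since any shortest path connecting two different copies in $S^n_{C_4}$ must traverse at least one cross-edge and then travel inside a copy, the inter-copy distance between two such vertices then exceeds the relevant packing threshold. Any residual short-distance conflicts can be resolved by keeping a small finite menu of admissible $5$-packing colorings of $S^{n-1}_{C_4}$ and picking, for each of the four copies, a variant so that all four cross-edges see a compatible pair of colors.
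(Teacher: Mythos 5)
Your treatment of $n=1$, $n=2$, and the lower bound for $n\ge 3$ matches the paper: the same subgraphs $H'$ and $H$ are used via Lemmas~\ref{lemmaH'} and~\ref{lemmaH}, and the embeddings you describe are correct (for packing colorings the subgraph relation suffices, since distances can only shrink when passing to a supergraph, so the lower bounds propagate as you claim). The only substantive divergence, and the only genuine gap, is the upper bound $\chi_{\rho}(S^n_{C_4})\le 5$ for $n\ge 3$: you propose an induction that glues four (possibly recolored) copies of a $5$-packing coloring of $S^{n-1}_{C_4}$, but you never exhibit a base coloring and never carry out the cross-copy verification. The paper instead applies one fixed $5$-packing coloring of $S^3_{C_4}$ identically to every block $\underline{w}S^3_{C_4}$ of $S^n_{C_4}$ and then checks each color class globally.

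The concrete problems with your plan are the following. First, your key device --- ``choose the base coloring so that colors $4$ and $5$ sit far from all extreme vertices'' --- is not what makes the construction work, and it is unclear it can even be achieved: in the paper's coloring of $S^3_{C_4}$, color $5$ is placed \emph{on} the extreme vertices $111$ and $333$, and the verification relies instead on the fact that $\{1,3\}\notin E(C_4)$ and $\{0,2\}\notin E(C_4)$, so the blocks containing the dangerous same-colored (near-)extreme vertices are never joined by a cross-edge. Second, you say nothing about colors $2$ and $3$ across blocks, and color $3$ is the delicate case: the coloring puts color $3$ on vertices at distance $1$ from extreme vertices (namely $001$ and $221$), which would violate the $3$-packing condition across a cross-edge were it not, again, for $\{0,2\}\notin E(C_4)$. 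Third, permuting colors does not in general preserve a packing coloring (a $2$-packing relabelled as class $5$ need not be a $5$-packing), so each member of your ``menu of variants'' would have to be verified from scratch; the paper avoids this entirely by using the identity on every block. Until an explicit base coloring is produced and the near-extreme behaviour of every color class $1,\ldots,5$ is checked against the edge/non-edge structure of $C_4$, the upper bound for $n\ge 3$ is not established.
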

\begin{proof} Since $S^1_{C_4}$ is isomorphic to $C_4$, it is clear that $\chi_{\rho}(S^1_{C_4})=3$. 
The graph $S^2_{C_4}$ contains a subgraph isomorphic to $H'$, which is shown in Fig.~\ref{fig:subgraph2}. Since $H'$ is not $3$-packing colorable, also $S^2_{C_4}$ is not $3$-packing colorable, thus $\chi_{\rho}(S^2_{C_4})>3$. As shown in Fig.~\ref{fig:C_4} (find the 4-packing coloring of the subgraph $2S^2_{C_4}$), there exists a $4$-packing coloring of $S^2_{C_4}$ and therefore $\chi_{\rho}(S^2_{C_4})=4$. 

First we show that $\chi_{\rho}(S^n_{C_4}) \leq 5$ holds for any $n \geq 3$. 
Recall that for each $n \geq 3$ graph $S^n_{C_4}$ contains $4^{n-3}$ distinct copies of subgraphs, isomorphic to $S^3_{C_4}$. Therefore it is enough to show that there exists a $5$-packing coloring of $S^3_{C_4}$, which can be used for packing coloring of each subgraph of $S^n_{C_4}$ that is isomorphic to $S^3_{C_4}$, and in this way forms a $5$-packing coloring of $S^n_{C_4}$ for any $n \geq 3$. \\
Color each subgraph of $S^n_{C_4}$, $n \geq 3$, which is isomorphic to $S^3_{C_4}$, using the packing coloring from Fig.~\ref{fig:C_4}. We claim that in that way we create a $5$-packing coloring of $S^n_{C_4}$. It is clear that this holds for $n=3$. 

The extreme vertices of $S^3_{C_4}$, which are colored by $1$, are only $000$ and $222$. Since $(0, 2) \notin E(C_4)$, by the definition of edges in $S^n_{C_4}$, $ n \geq 3$, vertices $\underline{u}000$ and $\underline{v}222$, where $\underline{u}, \underline{v} \in [4]_0^{n-3}$, both colored by $1$, are not adjacent in any $S^n_{C_4}$,  $n \geq 3$. Therefore the distance between any two vertices of $S^n_{C_4}$, $n \geq 3$, which are both colored by $1$, is at least $2$.

In $S^3_{C_4}$ none of the extreme vertices is colored by color $2$, hence any two vertices of $S^n_{C_4}$, $n \geq 3$, both colored by $2$, are at distance at least $3$.

Consider now any two vertices of $S^n_{C_4}$, $n \geq 3$, which are both colored by color $3$ and belong to two distinct copies $S^3_{C_4}$. 
If such two vertices do not belong to any extreme square of any copy of $S^3_{C_4}$, then they are at distance at least $5$. If one of these two vertices belongs to some extreme square (recall that this is not extreme vertex) of a copy of $S^3_{C_4}$, but the other not, then they are at distance at least $4$. 
The only pairs of problematic vertices can be those, which consist of two vertices, both colored by color $3$ and both belong to extreme squares in different copies of $S^3_{C_4}$. 
Since the diameter of $S^3_{C_4}$ is $14$, any two vertices of $S^n_{C_4}$, which are both corresponding to vertex $001$ (respectively $221$) of $S^3_{C_4-e}$, are at distance more than $3$. Therefore, the only pairs of problematic vertices are actually $\underline{u}001$ and $\underline{v}221$, where $\underline{u}, \underline{v} \in [4]_0^{n-3}$, $\underline{u} \neq \underline{v}$.  But the extreme squares to which such two vertices belong ($\underline{u}00S^1_{C_4}$ and $\underline{v}22S^1_{C_4}$) are not connected by any edge in any $S^n_{C_4}$, $n\geq 4$, since $ (0,2) \notin E(C_4)$. Therefore also such vertices are at distance more than $3$. 

Each vertex in $S^3_{C_4}$, which is colored by color $4$, is at distance at least $3$ from any of the extreme vertices. 
Hence any two vertices of $S^n_{C_4}$ both colored by $4$ are at distance more than $5$.

Any non-extreme vertex of a copy of $S^3_{C_4}$, which is colored by color $5$, is in $S^n_{C_4}$, $n \geq 3$, at distance at least $7$ from any vertex, which is colored by $5$ and belong to the other copy of $S^3_{C_4}$. 
Also extreme vertices of copies of $S^3_{C_4}$, $\underline{u}111$ and $\underline{v}333$, $\underline{u}, \underline{v} \in [4]_0^{n-3}$, which are colored by color $5$ and belong to two different copies of $S^3_{C_4}$, are not adjacent in any $S^n_{C_4}$, $n \geq 3$, by the definition of edges of generalized Sierpi\'nski graphs (recall that $(1,3) \notin E(C_4)$). Hence also vertices $\underline{u}111$ and $\underline{v}333$ are at distance more than $5$.
Therefore the presented coloring is a $5$-packing coloring of $S^n_{C_4}$ for any $n \geq 3$ and thus $\chi_{\rho}(S^n_{C_4}) \leq 5$.

 
We next show that $\chi_{\rho}(S^n_{C_4}) \geq 5$ holds for any $n \geq 3$. Recall that for each $n \geq 3$ graph $S^n_{C_4}$ contains a subgraph isomorphic to $S^3_{C_4}$. Since $S^3_{C_4}$ contains a subgraph, isomorphic to $H$ from Fig.~\ref{fig:H*} as shown in Fig.~\ref{fig:C_4}, also each of the graphs $S^n_{C_4}$, $n \geq 3$, contains the subgraph, isomorphic to $H$. 
Using Lemma~\ref{lemmaH} we have $$4<\chi_{\rho}(H) \leq  \chi_{\rho}(S^n_{C_4})$$ for any $n \geq 3$ and thus $\chi_{\rho}(S^n_{C_4})=5$.
\qed
\end{proof}

\begin{figure}
\begin{center}
\begin{tikzpicture}
\def\vr{3pt}
\def\len{1}

\foreach \i in {0, 1, 2, 3, 4, 5, 6, 7, 8 }{
\coordinate(x_\i) at (\i, 0); 
\coordinate(y_\i) at (\i, 1);
\coordinate(z_\i) at (\i, 2);
\coordinate(w_\i) at (\i, 3);
\coordinate(u_\i) at (\i, 4);
\coordinate(v_\i) at (\i, 5);
\coordinate(o_\i) at (\i, 6);
\coordinate(p_\i) at (\i, 7);
}

\foreach \i in {0, 1, 2, 3, 4, 5, 6, 7}{
\draw (u_\i) -- (v_\i);
\draw (o_\i) -- (p_\i);
}

\draw (x_0) -- (y_0); \draw[thick, orange](x_1) -- (y_1); \draw[thick, orange](x_2) -- (y_2); \draw[thick, orange](x_3) -- (y_3);
\draw[thick, orange](x_4) -- (y_4); \draw[thick, orange](x_5) -- (y_5); \draw[thick, orange](x_6) -- (y_6); \draw (x_7) -- (y_7);

\draw (z_0) -- (w_0); \draw (z_1) -- (w_1); \draw[thick, orange](z_2) -- (w_2); \draw[thick, orange](z_3) -- (w_3); \draw[thick, orange](z_4) -- (w_4); \draw[thick, orange](z_5) -- (w_5); \draw (z_6) -- (w_6); \draw (z_7) -- (w_7);

\draw (y_0) -- (z_0); \draw[thick, orange](y_3) -- (z_3); \draw[thick, orange](y_4) -- (z_4); \draw (y_7) -- (z_7);
   
\draw (v_0) -- (o_0); \draw (v_3) -- (o_3); \draw (v_4) -- (o_4); \draw (v_7) -- (o_7);
\draw (w_0) -- (u_0); \draw (w_7) -- (u_7);

\draw[thick, orange](x_0) -- (x_1); \draw[thick, orange](x_1) -- (x_2); \draw[thick, orange](x_2) -- (x_3); \draw[thick, orange](x_3) -- (x_4); \draw[thick, orange](x_4) -- (x_5); \draw[thick, orange](x_5) -- (x_6); \draw[thick, orange](x_6) -- (x_7);

\draw (z_0) -- (z_1); \draw[thick, orange](z_2) -- (z_3); \draw (z_4)[thick, orange]-- (z_5); \draw (z_6) -- (z_7);

\draw (u_0) -- (u_1); \draw (u_1) -- (u_2); \draw (u_2) -- (u_3); \draw (u_4) -- (u_5); \draw (u_5) -- (u_6); \draw (u_6) -- (u_7);

\draw (o_0) -- (o_1); \draw (o_2) -- (o_3); \draw (o_4) -- (o_5); \draw (o_6) -- (o_7);

\foreach \i in {0, 1, 2, 3, 4, 5, 6, 7}{
\draw(x_\i)[fill=orange] circle(\vr);
\ifthenelse{\i < 2}{\draw(z_\i)[fill=white] circle(\vr);}{}
\ifthenelse{\i > 5}{\draw(z_\i)[fill=white] circle(\vr);}{}
\ifthenelse{\i =2}{\draw(z_\i)[fill=orange] circle(\vr);}{}
\ifthenelse{\i =3}{\draw(z_\i)[fill=orange] circle(\vr);}{}
\ifthenelse{\i =4}{\draw(z_\i)[fill=orange] circle(\vr);}{}
\ifthenelse{\i =5}{\draw(z_\i)[fill=orange] circle(\vr);}{}
\ifthenelse{\i < 2}{\draw(w_\i)[fill=white] circle(\vr);}{}
\ifthenelse{\i > 5}{\draw(w_\i)[fill=white] circle(\vr);}{}
\ifthenelse{\i =2}{\draw(w_\i)[fill=orange] circle(\vr);}{}
\ifthenelse{\i =3}{\draw(w_\i)[fill=orange] circle(\vr);}{}
\ifthenelse{\i =4}{\draw(w_\i)[fill=orange] circle(\vr);}{}
\ifthenelse{\i =5}{\draw(w_\i)[fill=orange] circle(\vr);}{}
\ifthenelse{\i =0}{\draw(y_\i)[fill=white] circle(\vr);}{}
\ifthenelse{\i =1}{\draw(y_\i)[fill=orange] circle(\vr);}{}
\ifthenelse{\i =2}{\draw(y_\i)[fill=orange] circle(\vr);}{}
\ifthenelse{\i =3}{\draw(y_\i)[fill=orange] circle(\vr);}{}
\ifthenelse{\i =4}{\draw(y_\i)[fill=orange] circle(\vr);}{}
\ifthenelse{\i =5}{\draw(y_\i)[fill=orange] circle(\vr);}{}
\ifthenelse{\i =6}{\draw(y_\i)[fill=orange] circle(\vr);}{}
\ifthenelse{\i =7}{\draw(y_\i)[fill=white] circle(\vr);}{}
\draw(u_\i)[fill=white] circle(\vr);
\draw(v_\i)[fill=white] circle(\vr);
\draw(o_\i)[fill=white] circle(\vr);
\draw(p_\i)[fill=white] circle(\vr);
}
\end{tikzpicture}
\end{center}
\caption{Graph $S^3_{P_4}$ with graph $H$ as its induced subgraph in orange. }
\label{fig:S_3}

\end{figure}
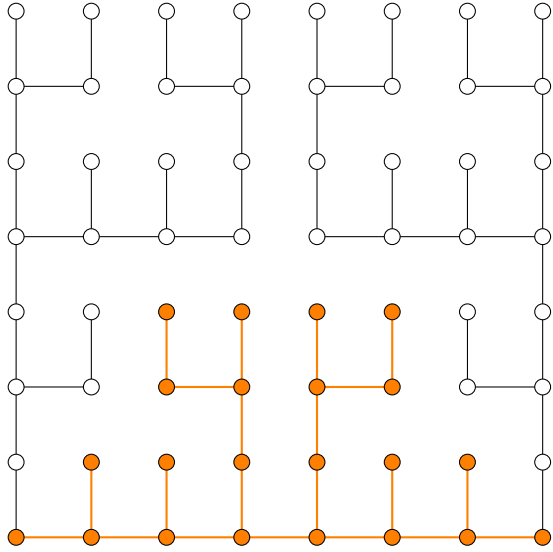

We follow with packing chromatic numbers of the generalized Sierpi\' nski graphs when $G$ is $P_4$.

\begin{theorem}
If $n\geq 1$ and $S^n_{P_4}$ is the generalized Sierpi\'nski graph of $P_4$ of dimension $n$, then
$$\chi_{\rho}(S^n_{P_4}) =
\left\{\begin{array}{ll}
3; & n=1\,,\\
4; & n=2\,,\\
5; & n \geq 3\,.
\end{array}\right.$$
\end{theorem}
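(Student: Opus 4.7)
The plan is to mirror the proof of Theorem~\ref{theoremC_4}, exploiting the fact that $P_4$ is a spanning subgraph of $C_4$. For $n=1$, we have $S^1_{P_4}\cong P_4$, and $\chi_{\rho}(P_4)=3$ is routine: the coloring $(1,2,1,3)$ along the path is a valid $3$-packing coloring, while no $2$-coloring exists, since any $2$-packing in $P_4$ can contain at most the two endpoints (all other vertex pairs lie at distance $\le 2$), which would force the adjacent middle vertices into a single color class.

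For the upper bounds when $n\ge 2$, the key observation is that $E(P_4)\subseteq E(C_4)$ implies that $S^n_{P_4}$ is a spanning subgraph of $S^n_{C_4}$ on the common vertex set $[4]_0^n$. Since deleting edges can only increase distances, every $k$-packing coloring of $S^n_{C_4}$ is automatically a $k$-packing coloring of $S^n_{P_4}$, so $\chi_{\rho}(S^n_{P_4})\le \chi_{\rho}(S^n_{C_4})$. Theorem~\ref{theoremC_4} then yields $\chi_{\rho}(S^2_{P_4})\le 4$ and $\chi_{\rho}(S^n_{P_4})\le 5$ for $n\ge 3$.

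For the matching lower bounds, the strategy is to locate the obstruction graphs $H'$ and $H$ of Lemmas~\ref{lemmaH'} and~\ref{lemmaH} as subgraphs. In $S^2_{P_4}$, the two copies $1S^1_{P_4}$ and $2S^1_{P_4}$ are paths $10$--$11$--$12$--$13$ and $20$--$21$--$22$--$23$, joined in $S^2_{P_4}$ by the single edge $\{12,21\}$ between their interior vertices, which is precisely the graph $H'$ of Figure~\ref{fig:subgraph2}; hence Lemma~\ref{lemmaH'} gives $\chi_{\rho}(S^2_{P_4})\ge 4$. In $S^3_{P_4}$, Figure~\ref{fig:S_3} exhibits the required copy of $H$ (highlighted in orange), and since $S^n_{P_4}$ contains $S^3_{P_4}$ as a subgraph for every $n\ge 3$, Lemma~\ref{lemmaH} yields $\chi_{\rho}(S^n_{P_4})\ge 5$.

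The main obstacle is the embedding of the $22$-vertex graph $H$ into $S^3_{P_4}$: one must check that the backbone path of length seven together with its two symmetric tree appendages can be realized using the recursive $P_4$-structure of $S^3_{P_4}$. This is carried out by direct inspection of Figure~\ref{fig:S_3}; the remaining work is essentially bookkeeping once Theorem~\ref{theoremC_4} and the two lemmas are in hand.
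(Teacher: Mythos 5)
Your proposal is correct and follows essentially the same route as the paper: the case $n=1$ is immediate, the upper bounds come from $S^n_{P_4}$ being a subgraph of $S^n_{C_4}$ together with Theorem~\ref{theoremC_4}, and the lower bounds come from locating $H'$ in $S^2_{P_4}$ and $H$ in $S^3_{P_4}$ and invoking Lemmas~\ref{lemmaH'} and~\ref{lemmaH}. Your explicit identification of $H'$ as the two paths $1S^1_{P_4}$ and $2S^1_{P_4}$ joined by the edge $\{12,21\}$ is a welcome addition of detail, but the argument is the same.
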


\begin{proof} Since $S^1_{P_4}$ is isomorphic to $P_4$ it is clear that $\chi_{\rho}(S^1_{P_4})=3$. 
Graph $H'$ shown in Fig.~\ref{fig:subgraph2} is an induced subgraph of $S^2_{P_4}$. Hence by Lemma~\ref{lemmaH'}, $\chi_{\rho}(S^2_{P_4})>3$. Since $S^2_{P_4}$ is isomorphic to a subgraph of the graph $S^2_{C_4}$, we have $\chi_{\rho}(S^2_{P_4}) \leq 4$. Thus $\chi_{\rho}(S^2_{P_4}) = 4$. 

For any $n \geq 3$, the graph $H$ shown in Fig.~\ref{fig:H*} is an induced subgraph of $S^n_{P_4}$ (see Fig.~\ref{fig:S_3}) and graph $S^n_{P_4}$ is isomorphic to a subgraph of $S^n_{C_4}$. Therefore for any $n \geq 3$ we infer by using Lemma~\ref{lemmaH} and Theorem~\ref{theoremC_4} that
$$ 5 \leq \chi_{\rho}(S^n_{P_4}) \leq \chi_{\rho}(S^n_{C_4}) = 5.$$ Hence $\chi_{\rho}(S^n_{P_4})=5$ for any $n \geq 3$. \qed
\end{proof}

%
%
%
%
The next family of graphs $S^n_G$ for which we get a complete solution of our problem is when $G$ is the claw, $K_{1,3}$. We label the vertices of $K_{1, 3}$ with labels from the set $[3]_0$, such that vertex label by $1$ has degree $3$. This notation is then also the base for the labeling of vertices of $S^n_{K_{1, 3}}$, $n \geq 2$.

\begin{theorem}
If $n\geq 1$ and $S^n_{K_{1, 3}}$ is the generalized Sierpi\'nski graph of $K_{1,3}$ of dimension $n$, then
$$\chi_{\rho}(S^n_{K_{1, 3}}) =
\left\{\begin{array}{ll}
2; & n=1\,,\\
3; & n \geq 2\,.
\end{array}\right.$$
\end{theorem}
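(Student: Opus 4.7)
The plan is to handle $n=1$ separately and, for $n\ge 2$, to prove matching bounds $3\le\pch(S^n_{K_{1,3}})\le 3$. For $n=1$ the graph $S^1_{K_{1,3}}\cong K_{1,3}$ admits the $2$-packing coloring that assigns color $2$ to the center and color $1$ to its three pairwise nonadjacent leaves, so $\pch(S^1_{K_{1,3}})=2$.

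For the lower bound when $n\ge 2$, since $\pch$ is monotone under subgraph inclusion and $S^n_{K_{1,3}}$ contains $S^2_{K_{1,3}}$ as a subgraph, it is enough to prove $\pch(S^2_{K_{1,3}})\ge 3$. Taking $1$ to be the center of $K_{1,3}$, the vertex $11$ of $S^2_{K_{1,3}}$ has three neighbours $10,12,13$, and each of $10,12,13$ is the single outside neighbour of the corresponding center $01,21,31$ of a non-central copy of $K_{1,3}$. Any $2$-packing coloring $c$ would run into a contradiction by a short case analysis on $c(11)$: if $c(11)=1$ then $c(10)=c(12)=c(13)=2$, but $d(10,12)=2$ through $11$; if $c(11)=2$ then $c(10)=c(12)=c(13)=1$, which forces $c(01)=2$ because $01\sim 10$, yet $d(11,01)=2$.

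For the upper bound when $n\ge 2$, I would define a single $3$-packing coloring that depends only on the last two symbols of each vertex label. Put $c_2(11)=3$, $c_2(01)=c_2(21)=c_2(31)=2$, and $c_2(v)=1$ on the remaining twelve vertices of $S^2_{K_{1,3}}$; a direct inspection confirms that $c_2$ is a valid $3$-packing coloring. For $n\ge 3$ set $c_n(iv)=c_{n-1}(v)$ for $i\in[4]_0$ and $v\in V(S^{n-1}_{K_{1,3}})$, equivalently $c_n(v_1\dots v_n)=c_2(v_{n-1}v_n)$. Because $K_{1,3}$ is a tree, $S^n_{K_{1,3}}$ is a tree too; in each copy $iS^{n-1}_{K_{1,3}}$ the coloring coincides with $c_{n-1}$, and the induction hypothesis handles distances there. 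The three top-level inter-copy bridges $\{01^{n-1},10^{n-1}\}$, $\{12^{n-1},21^{n-1}\}$, $\{13^{n-1},31^{n-1}\}$ each receive one color-$3$ endpoint (the one whose last two symbols are $11$) and one color-$1$ endpoint, so no bridge is monochromatic in color~$1$.

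The main obstacle will be the distance check across copies for colors $2$ and $3$. The tree structure makes the shortest path between same-colored vertices $u=iv'$ and $w=jw'$ in distinct copies unique, of length $d_{S^{n-1}}(v',j^{n-1})+1+d_{S^{n-1}}(i^{n-1},w')$ when $\{i,j\}\in E(K_{1,3})$, and of an analogous longer sum routed through copy~$1$ otherwise. The key inequalities, each read off from $S^{n-1}_{K_{1,3}}$ together with the induction hypothesis, are that every color-$2$ vertex of $S^{n-1}_{K_{1,3}}$ lies at distance at least $1$ from any extreme vertex $j^{n-1}$ with $j\neq 1$ and at distance at least $2$ from $1^{n-1}$, while every color-$3$ vertex of $S^{n-1}_{K_{1,3}}$ (necessarily ending in $11$) lies at distance at least $3$ from any extreme $j^{n-1}$ with $j\neq 1$ and at distance at least $4$ from $1^{n-1}$ unless it equals $1^{n-1}$. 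Substituting these bounds into the path-length formula in each sub-case produces length strictly greater than $2$ for color-$2$ pairs and strictly greater than $3$ for color-$3$ pairs, which closes the induction.
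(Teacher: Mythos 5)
Your proposal is correct and follows essentially the same route as the paper: the identical $3$-packing coloring of $S^2_{K_{1,3}}$ (color $3$ on $11$, color $2$ on $01,21,31$, color $1$ elsewhere) propagated by the last two symbols of the label, with the upper bound verified through distance-to-extreme-vertex estimates and the lower bound reduced to a $P_4$-type obstruction inside $S^2_{K_{1,3}}$ (your case analysis on $c(11)$ is just $\chi_{\rho}(P_4)=3$ applied to the path $01\,10\,11\,12$, which is the subgraph the paper invokes). The only cosmetic difference is that you organize the distance checks as an induction over the top-level split into four copies of $S^{n-1}_{K_{1,3}}$, while the paper argues directly about the tiling by copies of $S^2_{K_{1,3}}$; both hinge on the same key inequalities.
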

\begin{proof} As graph $S^1_{K_{1,3}}$ is isomorphic to $K_{1,3}$, we have $\chi_{\rho}(S^1_{K_{1, 3}})=2$. 
For each $n \geq 2$ the graph $S^n_{K_{1, 3}}$ contains a subgraph isomorphic to $P_4$, hence $\chi_{\rho}(S^n_{K_{1, 3}}) \geq 3$ for each $n \geq 2$. 

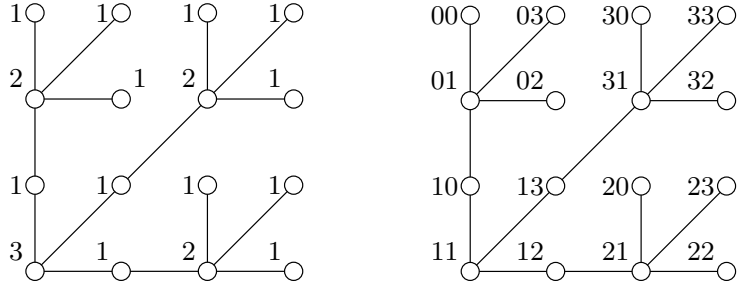
\begin{figure}[h]
\hspace*{2cm}
\begin{subfigure}[b]{0.3\textwidth}
        \centering
        \resizebox{\linewidth}{!}{
\begin{tikzpicture}
\def\vr{3pt}
\def\len{1}
\foreach \i in {0, 1, 2, 3}{
\foreach \j in {0, 1, 2, 3}{
\coordinate(x^\j_\i) at (\i, \j);}}
\draw (x^0_0) -- (x^0_3);
\draw (x^0_0) -- (x^3_0);
\draw (x^0_0) -- (x^3_3);
\draw (x^3_1) -- (x^2_0) -- (x^2_1);
\draw (x^3_2) -- (x^2_2) -- (x^2_3);
\draw (x^1_2) -- (x^0_2) -- (x^1_3);
\foreach \i in {0, 1, 2, 3}{
\foreach \j in {0, 1, 2, 3}{
\draw(x^\j_\i)[fill=white] circle(\vr);}
\draw(x^3_\i)node[left]{\footnotesize{1}};
\draw(x^1_\i)node[left]{\footnotesize{1}}; 
}
\draw(x^0_0) node[above left] {\footnotesize{3}};
\draw(x^0_1) node[above left] {\footnotesize{1}};
\draw(x^0_2) node[above left] {\footnotesize{2}};
\draw(x^2_3)node[above left]{\footnotesize{1}};
\draw(x^0_3)node[above left]{\footnotesize{1}};
\draw(x^2_0) node[above left] {\footnotesize{2}};
\draw(x^2_1) node[above right] {\footnotesize{1}};
\draw(x^2_2) node[above left] {\footnotesize{2}};
\end{tikzpicture}
}
\end{subfigure}
%
%
\hspace*{1cm}
\begin{subfigure}[b]{0.3\textwidth}
        \centering
        \resizebox{\linewidth}{!}{
\begin{tikzpicture}
\def\vr{3pt}
\def\len{1}
\foreach \i in {0, 1, 2, 3}{
\foreach \j in {0, 1, 2, 3}{
\coordinate(x^\j_\i) at (\i, \j);}}
\draw (x^0_0) -- (x^0_3);
\draw (x^0_0) -- (x^3_0);
\draw (x^0_0) -- (x^3_3);
\draw (x^3_1) -- (x^2_0) -- (x^2_1);
\draw (x^3_2) -- (x^2_2) -- (x^2_3);
\draw (x^1_2) -- (x^0_2) -- (x^1_3);
\foreach \i in {0, 1, 2, 3}{
\foreach \j in {0, 1, 2, 3}{
\draw(x^\j_\i)[fill=white] circle(\vr);}   }
\draw(x^0_0)node[above left]{\footnotesize{11}}; \draw(x^0_1)node[above left]{\footnotesize{12}}; \draw(x^0_2)node[above left] {\footnotesize{21}}; \draw(x^0_3)node[above left] {\footnotesize{22}};
\draw(x^1_0)node[left]{\footnotesize{10}}; \draw(x^1_1)node[left]{\footnotesize{13}}; \draw(x^1_2)node[left] {\footnotesize{20}}; \draw(x^1_3)node[left] {\footnotesize{23}};
\draw(x^2_0)node[above left]{\footnotesize{01}}; \draw(x^2_1)node[above left]{\footnotesize{02}}; \draw(x^2_2)node[above left] {\footnotesize{31}}; \draw(x^2_3)node[above left] {\footnotesize{32}};
\draw(x^3_0)node[left]{\footnotesize{00}}; \draw(x^3_1)node[left]{\footnotesize{03}}; \draw(x^3_2)node[left] {\footnotesize{30}}; \draw(x^3_3)node[left] {\footnotesize{33}};
\end{tikzpicture}}
\end{subfigure}
\caption{The $3$-packing coloring of $S^2_{K_{1,3}}$ and the notation of its vertices.}
\label{fig:zvezda}
\end{figure}

Note that the coloring shown in Figure~\ref{fig:zvezda} is a 3-packing coloring of $S^2_{K_{1, 3}}$. We next show that this coloring can be used for a $3$-packing coloring of $S^n_{K_{1, 3}}$, for any $n \geq 2$. For each $w\in [4]_0^{n-2}$ color the vertices of subgraph $\underline{w}S^2_{K_{1, 3}}$ of $S^n_{K_{1, 3}}$ by using the $3$-packing coloring of $S^2_{K_{1, 3}}$ as shown in the figure.

In $S^2_{K_{1, 3}}$ the extreme vertices colored by color $1$ are $00$, $22$ and $33$. 
Since the vertices $0$, $2$ and $3$ are pairwise non-adjacent in $K_{1,3}$ by the definition of the edges of generalized Sierpi\'nski graphs, the vertices $\underline{u}00$, $\underline{v}22$ and $\underline{w}33$, where $\underline{u}, \underline{v}, \underline{w} \in [4]_0^{n-2}$, are also pairwise non-adjacent in $S^n_{K_{1, 3}}$, $n \geq 2$. Therefore any two vertices of $S^n_{K_{1, 3}}$, $n \geq 2$, colored by $1$, are at distance at least $2$. 
Each vertex of $S^n_{K_{1, 3}}$, $n \geq 2$, colored by color $2$, is at distance at least $1$ from any extreme vertex of any subgraph $\underline{w}S^2_{K_{1, 3}}$, where $\underline{w} \in [4]_0^{n-2}$, and this means, that any two vertices of $S^n_{K_{1, 3}}$, $n \geq 2$, colored $2$, are at distance at least $3$.
Since there is only one vertex of $S^2_{K_{1, 3}}$, colored $3$, and the diameter of this graph is $3$, any two vertices of a graph $S^n_{K_{1, 3}}$, $n \geq 2$, colored by $3$, are at distance at least $4$. 
Hence the described coloring is a $3$-packing coloring of the graph $S^n_{K_{1, 3}}$ for any $n \geq 2$, and so $\chi_{\rho}(S^n_{K_{1, 3}}) = 3$.
\qed
\end{proof}
%
%
%
%

The following lemma will be used for determining $\pch(S^3_{K_4-e})$. We use the notation of vertices of $K_4-e$, in the set $[3]_0$, such that $0$ and $2$ are not adjacent. This is then also reflected in the notation of vertices of $S^n_{K_4-e}$. 
 
\begin{lemma}
For any packing coloring of the subgraph of $S^3_{K_4-e}$, which is induced by the set of vertices $V(3S^2_{K_4-e}) \cup V(03S^1_{K_4-e}) \cup V(23S^1_{K_4-e})$ $($respectively $V(1S^2_{K_4-e}) \cup V(01S^1_{K_4-e}) \cup V(21S^1_{K_4-e}))$, at least $7$ colors is required. Moreover, if color $7$ is not used in $3S^2_{K_4-e}$ $($respectively $1S^2_{K_4-e})$, then it is needed in both, in $03S^1_{K_4-e}$ and also in $23S^1_{K_4-e}$ $($respectively $01S^1_{K_4-e}$ and $21S^1_{K_4-e})$.
\label{7pakirno}
\end{lemma}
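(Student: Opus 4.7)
I plan to argue by contradiction: assume that the induced subgraph $F$ of $S^3_{K_4-e}$ on $V(3S^2_{K_4-e})\cup V(03S^1_{K_4-e})\cup V(23S^1_{K_4-e})$ admits a packing coloring $c$ using only the colors $\{1,\ldots,6\}$, and derive a contradiction. The parenthetical alternative will follow from the symmetry of $K_4-e$ that swaps $1\leftrightarrow 3$ and fixes $\{0,2\}$, which maps the first subgraph to the second; the \emph{moreover} statement will fall out of the main case check.

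The argument rests on three distance facts. Every copy of $K_4-e$ appearing inside $F$ (the four inner copies $3iS^1_{K_4-e}$, $i\in[4]_0$, and the two pendants) has diameter $2$, so color $1$ appears at most twice inside such a copy (on the unique non-adjacent pair corresponding to the missing edge $\{0,2\}$) while each color $j\ge 2$ appears at most once. Next, $\diam(3S^2_{K_4-e})=6$: a short BFS from the extreme vertex $300$ shows that the pairs realizing this distance all lie between the ``non-adjacent'' inner copies $30S^1_{K_4-e}$ and $32S^1_{K_4-e}$, because $\{0,2\}\notin E(K_4-e)$ forces any shortest path between these copies to detour through $31S^1_{K_4-e}$ or $33S^1_{K_4-e}$. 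Finally, each pendant attaches to $3S^2_{K_4-e}$ by a single edge, namely $\{033,300\}$ and $\{233,322\}$, so every pendant vertex lies within distance $2$ of its attachment. Combining these facts, the sizes of the color classes inside $3S^2_{K_4-e}$ are bounded by $|V_1|\le 8$, $|V_j|\le 4$ for $j\in\{2,3,4\}$, $|V_5|\le 2$, and $|V_6|\le 1$, while inside each pendant one has $|V_1|\le 2$ and $|V_j|\le 1$ for $j\ge 2$.

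The core is a case analysis driven by the colors $c(300)$ and $c(322)$ of the two attachment vertices. Each pendant is a $K_4-e$ and therefore uses at least three distinct colors internally. If $c(300)=a$, then the color $a$ is forbidden at every vertex within distance $a$ of $300$, in particular on $033$; together with the short-range conflicts imposed by $3S^2_{K_4-e}$ on the other three pendant vertices, this leaves only a narrow list of admissible colorings of $03S^1_{K_4-e}$. Enumerating the possibilities for the pair $\{c(300),c(322)\}$ in $\{1,\ldots,6\}$ (halved in size by using the involutive automorphism of $K_4-e$ swapping $0\leftrightarrow 2$, extended coordinatewise to $3S^2_{K_4-e}$) and propagating the constraints through $3S^2_{K_4-e}$ shows that no combination extends to a legal $6$-packing coloring of both pendants simultaneously. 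This yields the desired contradiction and gives $\pch(F)\ge 7$.

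The main obstacle is the volume of this case check, particularly the bookkeeping for the high colors $5$ and $6$, whose positions in $3S^2_{K_4-e}$ are severely restricted (only a few specific pairs in $30S^1_{K_4-e}\times 32S^1_{K_4-e}$ can carry color $5$, and color $6$ appears at most once overall), and whose placement in turn constrains what is still available near $300$ and $322$ for the pendants. I would tame the case analysis by first fixing $c(300)$, then inspecting the forced structure in $3S^2_{K_4-e}$, and finally checking the pendant $03S^1_{K_4-e}$ and the symmetric pendant $23S^1_{K_4-e}$ separately. The \emph{moreover} statement follows because the obstruction at each attachment is local to its own pendant and the two pendants share no vertex: if color $7$ is available but does not appear on $V(3S^2_{K_4-e})$, then exactly the same local impossibility argument forces color $7$ to appear independently in both $03S^1_{K_4-e}$ and $23S^1_{K_4-e}$.
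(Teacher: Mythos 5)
Your setup is sound: the attachment edges $\{033,300\}$ and $\{233,322\}$, the diameter computations, and most of the color-class bounds are correct, and your overall strategy (show that no $6$-packing coloring of $3S^2_{K_4-e}$ extends to both pendant squares, then invoke symmetry for the $1S^2_{K_4-e}$ case) is the same strategy the paper uses. However, there is a genuine gap: the entire substantive content of the lemma is the finite but nontrivial verification that you describe as ``enumerating the possibilities \ldots and propagating the constraints \ldots shows that no combination extends,'' and you never carry it out. You explicitly flag this as ``the main obstacle'' and describe how you \emph{would} tame it, but a description of an intended case analysis is not a proof of its outcome. The paper performs exactly this verification: it organizes the cases by $|c^{-1}(1)\cap V(3S^2_{K_4-e})|\in\{<5,5,6,7,8\}$, derives in each case the (few) possible $6$-packing colorings of $3S^2_{K_4-e}$ up to symmetry, and then checks directly that each of them forces color $7$ into both $03S^1_{K_4-e}$ and $23S^1_{K_4-e}$.

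Two further points would cause trouble if you tried to execute your plan as written. First, your bounds $|V_3|\le 4$ and $|V_4|\le 4$ are correct but far from tight; the paper shows $|c^{-1}(3)|\le 2$ and $|c^{-1}(4)|\le 2$ on $3S^2_{K_4-e}$, and these tighter bounds are what make the counting argument bite (e.g., they immediately dispose of the case $|c^{-1}(1)|<5$, since then at most $15$ of the $16$ vertices can be colored). With your weaker bounds the sum is $23>16$ and counting gives you nothing, so every case must be settled by hand. Second, driving the enumeration by the pair $\bigl(c(300),c(322)\bigr)$ alone is unlikely to close the argument: whether a pendant vertex can receive a high color such as $5$ or $6$ depends on where those colors sit \emph{anywhere} in $3S^2_{K_4-e}$ (a single vertex colored $6$ in $3S^2_{K_4-e}$ can block color $6$ on pendant vertices at distance up to $6$ from it), so you effectively need to know the full coloring of $3S^2_{K_4-e}$, not just its values at the two attachment vertices. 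This is precisely why the paper first pins down all admissible $6$-packing colorings of $3S^2_{K_4-e}$ before examining the pendants.
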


%
\begin{figure}[htb!]
\begin{center}
\begin{tikzpicture}
\def\vr{3pt}
\def\len{1}
\foreach \i in {0, 1, 2, 3, 4, 5, 6, 7}{
\coordinate(x_\i) at (\i, 0);
\coordinate(y_\i) at (\i, 1);
\coordinate(z_\i) at (\i, 2);
\coordinate(w_\i) at (\i, 3);
\coordinate(u_\i) at (\i, 4);
\coordinate(v_\i) at (\i, 5);
\coordinate(o_\i) at (\i, 6);
\coordinate(p_\i) at (\i, 7);
}
\foreach \i in {0, 1, 2, 3, 4, 5, 6, 7}{
\ifthenelse{\i < 2}{\draw[thick, gray](u_\i) -- (v_\i);}{}
\ifthenelse{\i >3}{\draw[thick, gray](u_\i) -- (v_\i);}{}
\ifthenelse{\i =2}{\draw(u_\i) -- (v_\i);}{}
\ifthenelse{\i =3}{\draw(u_\i) -- (v_\i);}{}
\ifthenelse{\i < 2}{\draw(o_\i) -- (p_\i);}{\draw[thick, gray](o_\i) -- (p_\i);}
\ifthenelse{\i < 6}{\draw[thick, gray](x_\i) -- (y_\i);}{\draw(x_\i) -- (y_\i);}
\ifthenelse{\i < 4}{\draw[thick, gray](z_\i) -- (w_\i);}{}
\ifthenelse{\i > 5}{\draw[thick, gray](z_\i) -- (w_\i);}{}
\ifthenelse{\i=4}{\draw(z_\i) -- (w_\i);}{}
\ifthenelse{\i=5}{\draw(z_\i) -- (w_\i);}{}
}
\draw[thick, gray](y_0) -- (z_0); \draw[thick, gray](y_3) -- (z_3); \draw (y_4) -- (z_4);\draw (y_7) -- (z_7);
\draw (v_0) -- (o_0); \draw (v_3) -- (o_3); \draw[thick, gray](v_4) -- (o_4); \draw[thick, gray](v_7) -- (o_7);
\draw[thick, gray](w_0) -- (u_0); \draw[thick, gray](w_7) -- (u_7);
\draw[thick, gray](x_0) -- (x_1); \draw[thick, gray](x_1) -- (x_2); \draw[thick, gray](x_2) -- (x_3); \draw[thick, gray](x_3) -- (x_4); \draw[thick, gray](x_4) -- (x_5); \draw (x_5) -- (x_6); \draw (x_6) -- (x_7);
\draw (p_0) -- (p_1); \draw (p_1) -- (p_2); \draw[thick, gray](p_2) -- (p_3); \draw[thick, gray](p_3) -- (p_4); \draw[thick, gray](p_4) -- (p_5); \draw[thick, gray](p_5) -- (p_6); \draw[thick, gray](p_6) -- (p_7);
\draw[thick, gray](z_0) -- (z_1); \draw[thick, gray](z_2) -- (z_3); \draw (z_4) -- (z_5); \draw[thick, gray](z_6) -- (z_7);
\draw[thick, gray](y_0) -- (y_1);\draw[thick, gray](y_2) -- (y_3); \draw[thick, gray](y_4) -- (y_5); \draw (y_6) -- (y_7);
\draw[thick, gray](v_0) -- (v_1); \draw (v_2) -- (v_3); \draw[thick, gray](v_4) -- (v_5); \draw[thick, gray](v_6) -- (v_7);
\draw[thick, gray](u_0) -- (u_1); \draw (u_1) -- (u_2); \draw (u_2) -- (u_3); \draw[thick, gray](u_4) -- (u_5); \draw[thick, gray](u_5) -- (u_6); \draw[thick, gray](u_6) -- (u_7);
\draw[thick, gray](w_0) -- (w_1); \draw[thick, gray](w_1) -- (w_2); \draw[thick, gray](w_2) -- (w_3); \draw (w_4) -- (w_5); \draw (w_5) -- (w_6); \draw[thick, gray](w_6) -- (w_7);
\draw (o_0) -- (o_1); \draw[thick, gray](o_2) -- (o_3); \draw[thick, gray](o_4) -- (o_5); \draw[thick, gray](o_6) -- (o_7);
%
%
\draw[thick, gray](x_0) -- (y_1);
\draw[thick, gray](x_2) -- (y_3);
\draw[thick, gray](x_4) -- (y_5);
\draw (x_6) -- (y_7);
\draw[thick, gray](z_0) -- (w_1);
\draw[thick, gray](z_2) -- (w_3);
\draw (z_4) -- (w_5);
\draw[thick, gray](z_6) -- (w_7);
\draw[thick, gray](u_0) -- (v_1);
\draw (u_2) -- (v_3);
\draw[thick, gray](u_4) -- (v_5);
\draw[thick, gray](u_6) -- (v_7);
\draw (o_0) -- (p_1);
\draw[thick, gray](o_2) -- (p_3);
\draw[thick, gray](o_4) -- (p_5);
\draw[thick, gray](o_6) -- (p_7);
\draw[thick, gray](y_1) -- (z_2);
\draw (v_1) -- (o_2);
\draw (w_3) -- (u_4);
\draw (y_5) -- (z_6);
\draw[thick, gray](v_5) -- (o_6); 
\foreach \i in {0, 1, 2, 3, 4, 5, 6, 7}{
\ifthenelse{\i < 6}{\draw(x_\i)[fill=gray] circle(\vr);}{\draw(x_\i)[fill=white] circle(\vr);}
\ifthenelse{\i < 6}{\draw(y_\i)[fill=gray] circle(\vr);}{\draw(y_\i)[fill=white] circle(\vr);}
\ifthenelse{\i < 4}{\draw(z_\i)[fill=gray] circle(\vr);}{}
\ifthenelse{\i > 5}{\draw(z_\i)[fill=gray] circle(\vr);}{}
\ifthenelse{\i =4}{\draw(z_\i)[fill=white] circle(\vr);}{}
\ifthenelse{\i =5}{\draw(z_\i)[fill=white] circle(\vr);}{}
\ifthenelse{\i < 4}{\draw(w_\i)[fill=gray] circle(\vr);}{}
\ifthenelse{\i > 5}{\draw(w_\i)[fill=gray] circle(\vr);}{}
\ifthenelse{\i =4}{\draw(w_\i)[fill=white] circle(\vr);}{}
\ifthenelse{\i =5}{\draw(w_\i)[fill=white] circle(\vr);}{}
\ifthenelse{\i < 4}{\draw(z_\i)[fill=gray] circle(\vr);}{}
\ifthenelse{\i > 5}{\draw(z_\i)[fill=gray] circle(\vr);}{}
\ifthenelse{\i =4}{\draw(z_\i)[fill=white] circle(\vr);}{}
\ifthenelse{\i =5}{\draw(z_\i)[fill=white] circle(\vr);}{}
\ifthenelse{\i < 2}{\draw(u_\i)[fill=gray] circle(\vr);}{}
\ifthenelse{\i > 3}{\draw(u_\i)[fill=gray] circle(\vr);}{}
\ifthenelse{\i =2}{\draw(u_\i)[fill=white] circle(\vr);}{}
\ifthenelse{\i =3}{\draw(u_\i)[fill=white] circle(\vr);}{}
\ifthenelse{\i < 2}{\draw(v_\i)[fill=gray] circle(\vr);}{}
\ifthenelse{\i > 3}{\draw(v_\i)[fill=gray] circle(\vr);}{}
\ifthenelse{\i =2}{\draw(v_\i)[fill=white] circle(\vr);}{}
\ifthenelse{\i =3}{\draw(v_\i)[fill=white] circle(\vr);}{}
\ifthenelse{\i > 1}{\draw(o_\i)[fill=gray] circle(\vr);}{\draw(o_\i)[fill=white] circle(\vr);}
\ifthenelse{\i > 1}{\draw(p_\i)[fill=gray] circle(\vr);}{\draw(p_\i)[fill=white] circle(\vr);}
}
%
\draw(u_4)node[left]{\footnotesize{311}}; \draw(u_5)node[below]{\footnotesize{312}}; \draw(u_6)node[below]{\footnotesize{321}}; \draw(u_7)node[right] {\footnotesize{322}};
\draw(v_4)node[left]{\footnotesize{310}}; \draw(v_5)node[right]{\footnotesize{313}}; \draw(v_6)node[above] {\footnotesize{320}}; \draw(v_7)node[right] {\footnotesize{323}};
\draw(o_4)node[left]{\footnotesize{301}}; \draw(o_5)node[below]{\footnotesize{302}}; \draw(o_6)node[left] {\footnotesize{331}}; \draw(o_7)node[right] {\footnotesize{332}};
\draw(p_4)node[above]{\footnotesize{300}}; \draw(p_5)node[above]{\footnotesize{303}}; \draw(p_6)node[above] {\footnotesize{330}}; \draw(p_7)node[above right] {\footnotesize{333}};
%
\draw(u_0)node[above left]{\footnotesize{011}}; \draw(u_1)node[above left]{\footnotesize{012}}; 
\draw(u_2)node[above left] {\footnotesize{021}}; \draw(u_3)node[above left] {\footnotesize{022}};
\draw(v_0)node[above left]{\footnotesize{010}}; \draw(v_1)node[above left]{\footnotesize{013}}; 
\draw(v_2)node[above left] {\footnotesize{020}}; \draw(v_3)node[above left] {\footnotesize{023}};
\draw(o_0)node[above left]{\footnotesize{001}}; \draw(o_1)node[above left]{\footnotesize{002}}; 
\draw(o_2)node[above left] {\footnotesize{031}}; \draw(o_3)node[above left] {\footnotesize{032}};
\draw(p_0)node[above left]{\footnotesize{000}}; \draw(p_1)node[above left]{\footnotesize{003}}; 
\draw(p_2)node[above left] {\footnotesize{030}}; \draw(p_3)node[above left] {\footnotesize{033}};
%
\draw(x_4)node[below right]{\footnotesize{211}}; \draw(x_5)node[below right]{\footnotesize{212}}; 
\draw(x_6)node[below right] {\footnotesize{221}}; \draw(x_7)node[below right] {\footnotesize{222}};
\draw(y_4)node[below right]{\footnotesize{210}}; \draw(y_5)node[below right]{\footnotesize{213}}; 
\draw(y_6)node[below right] {\footnotesize{220}}; \draw(y_7)node[below right] {\footnotesize{223}};
\draw(z_4)node[below right]{\footnotesize{201}}; \draw(z_5)node[below right]{\footnotesize{202}}; 
\draw(z_6)node[below right] {\footnotesize{231}}; \draw(z_7)node[below right] {\footnotesize{232}};
\draw(w_4)node[below right]{\footnotesize{200}}; \draw(w_5)node[below right]{\footnotesize{203}}; 
\draw(w_6)node[below right] {\footnotesize{230}}; \draw(w_7)node[below right] {\footnotesize{233}};
%
\draw(x_0)node[below left]{\footnotesize{111}}; \draw(x_1)node[below]{\footnotesize{112}}; \draw(x_2)node[below]{\footnotesize{121}}; \draw(x_3)node[below] {\footnotesize{122}};
\draw(y_0)node[left]{\footnotesize{110}}; \draw(y_1)node[right]{\footnotesize{113}}; \draw(y_2)node[above]{\footnotesize{120}}; \draw(y_3)node[right] {\footnotesize{123}};
\draw(z_0)node[left]{\footnotesize{101}}; \draw(z_1)node[below]{\footnotesize{102}}; \draw(z_2)node[left]{\footnotesize{131}}; \draw(z_3)node[right] {\footnotesize{132}};
\draw(w_0)node[left]{\footnotesize{100}}; \draw(w_1)node[above]{\footnotesize{103}}; \draw(w_2)node[above] {\footnotesize{130}}; \draw(w_3)node[right]{\footnotesize{133}};
\draw[thick, orange](3.3, 3.6) -- (7.7, 3.6) -- (7.7, 7.5) -- (3.3, 7.5) -- (3.3, 3.6);
\draw[orange](7.7, 7.5) node[right]{$3S^2_{K_4-e}$};
\draw[thick, orange](1.3, 5.8) -- (3.2, 5.8) -- (3.2, 7.5) -- (1.3, 7.5) -- (1.3, 5.8);
\draw[orange](2, 7.7) node[above]{$03S^1_{K_4-e}$};
\draw[thick, orange](5.8, 1.6) -- (7.7, 1.6) -- (7.7, 3.5) -- (5.8, 3.5) -- (5.8, 1.6);
\draw[orange](7.7, 2) node[right]{$23S^1_{K_4-e}$};
\draw[thick, red](-0.7, -0.4) -- (3.7, -0.4) -- (3.7, 3.5) -- (-0.7, 3.5) -- (-0.7, -0.4);
\draw[red](-0.5, -0.5) node[left]{$1S^2_{K_4-e}$};
\draw[thick, red](-0.7, 3.6) -- (1.2, 3.6) -- (1.2, 5.4) -- (-0.7, 5.4) -- (-0.7, 3.6);
\draw[red](-1.5, 4) node[above]{$01S^1_{K_4-e}$};
\draw[thick, red](3.8, -0.4) -- (5.7, -0.4) -- (5.7, 1.2) -- (3.8, 1.2) -- (3.8, -0.4);
\draw[red](5, -0.5) node[below]{$21S^1_{K_4-e}$};
\end{tikzpicture}
\end{center}
\caption{The graph $S^3_{K_4-e}$ with subgraphs that appear in Lemma~\ref{7pakirno} marked by (red and orange) squares.}
\label{fig:lem}
\end{figure}
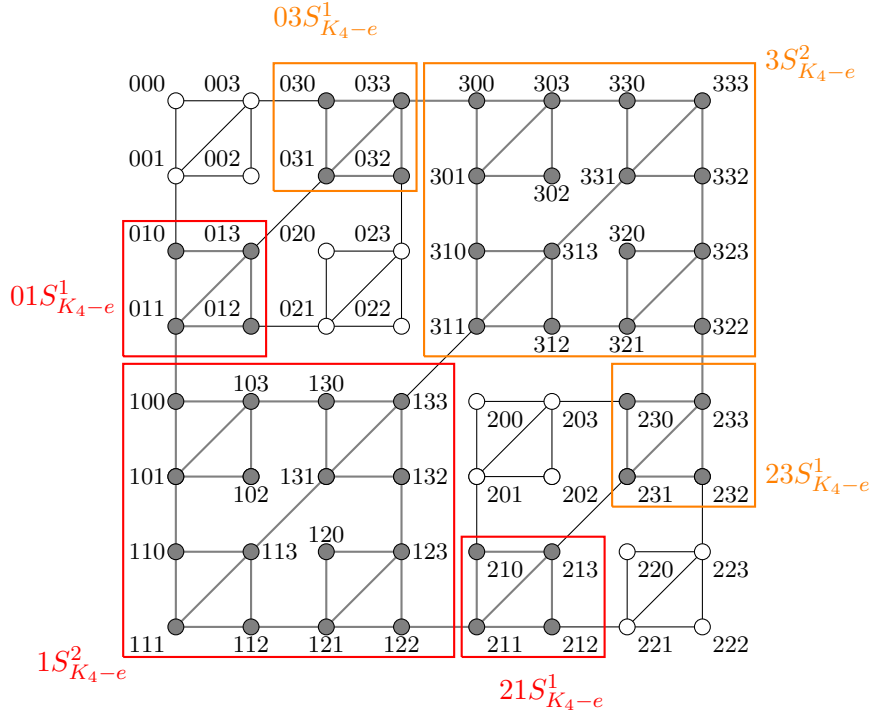

\begin{proof}
Let $c$ be a packing coloring of the subgraph of $S^3_{K_4-e}$, induced by the set of vertices $V(3S^2_{K_4-e}) \cup V(03S^1_{K_4-e}) \cup V(23S^1_{K_4-e})$. See Fig.~\ref{fig:lem} in which the corresponding subgraphs are marked. We may assume that $3S^2_{K_4-e}$ is $6$-packing colorable, for if not, then $c$ uses color $7$ already in $3S^2_{K_4-e}$, and the proof is done. For $c$ restricted to $3S^2_{K_4-e}$ we have: $|c^{-1}(1) \cap V(3S^2_{K_4-e})| \leq 8$, $|c^{-1}(2) \cap V(3S^2_{K_4-e})| \leq 4 $, $|c^{-1}(3) \cap V(3S^2_{K_4-e})| \leq 2 $, $|c^{-1}(4) \cap V(3S^2_{K_4-e})| \leq 2 $, $|c^{-1}(5) \cap V(3S^2_{K_4-e})| \leq 2$ and $|c^{-1}(6) \cap V(3S^2_{K_4-e})| \leq 1$. We distinguish five cases with respect to $|c^{-1}(1) \cap V(3S^2_{K_4-e})|$.

\textbf{Case 1.} $|c^{-1}(1) \cap V(3S^2_{K_4-e})| < 5$. \\
Since $|V(3S^2_{K_4-e})|=16$ and $\sum_{i=1}^6 |c^{-1}(i) \cap V(3S^2_{K_4-e})| \leq 15$, color $7$ is needed in $V(3S^2_{K_4-e})$ and we are done.

\textbf{Case 2.} $|c^{-1}(1) \cap V(3S^2_{K_4-e})| = 5$. \\
In this case $\sum_{i=1}^6 |c^{-1}(i) \cap V(3S^2_{K_4-e})| \leq 16$ and hence each of the numbers $|c^{-1}(i) \cap V(3S^2_{K_4-e})|$, $1 \leq i \leq 6$, must reach the established upper bound. 
Therefore in $3S^2_{K_4-e}$ there exist two vertices colored by $5$, and they are $302$ (or $300$) and $320$ (or $322$); we may assume without loss of generality the first option. Then we derive that $c(300)=c(311)=c(322)=c(333)=2$.
Without loss of generality we may assume that $c(301)=c(323)=4$ and $c(321)=3$. But then at most four vertices can receive color $1$, which is a contradiction. 
The described partial packing coloring of $3S^2_{K_4-e}$ and the notation of its vertices is shown in Fig.~\ref{fig:3S^2}.

%
\begin{figure}[h]
\hspace*{3cm}
\begin{subfigure}[b]{0.29\textwidth}
        \centering
        \resizebox{\linewidth}{!}{
\begin{tikzpicture}
\def\vr{3pt}
\def\len{1}
\foreach \i in {0, 1, 2, 3}{
\coordinate(x_\i) at (\i, 0);
\coordinate(y_\i) at (\i, 1);
\coordinate(z_\i) at (\i, 2);
\coordinate(w_\i) at (\i, 3); }
\draw (x_0) -- (x_3); 
\draw (w_0) -- (w_3); 
\draw (x_0) -- (w_0);
\draw (x_3) -- (w_3);
\draw (x_0) -- (w_3);
\draw (x_1) -- (y_1) -- (y_0);
\draw (z_3) -- (z_2) -- (w_2);
\draw (z_0) -- (z_1) -- (w_1) -- (z_0);
\draw (x_2) -- (y_3) -- (y_2) -- (x_2);
\foreach \i in {0, 1, 2, 3}{
\draw(x_\i)[fill=white] circle(\vr); 
\draw(y_\i)[fill=white] circle(\vr); 
\draw(z_\i)[fill=white] circle(\vr); 
\draw(w_\i)[fill=white] circle(\vr); 
}
\draw(x_0)node[below]{\footnotesize{2}};
\draw(x_2)node[above left] {\footnotesize{3}};
\draw(x_3)node[below] {\footnotesize{2}};
\draw(y_2)node[left] {\footnotesize{5}};
\draw(y_3)node[above left] {\footnotesize{4}};
\draw(z_0)node[below right] {\footnotesize{4}};
\draw(z_1)node[below] {\footnotesize{5}};
\draw(w_0)node[above] {\footnotesize{2}};
\draw(w_3)node[above] {\footnotesize{2}};
\end{tikzpicture}}
\end{subfigure}
%
%
%
\hspace*{1cm}
\begin{subfigure}[b]{0.32\textwidth}
        \centering
        \resizebox{\linewidth}{!}{
\begin{tikzpicture}
\def\vr{3pt}
\def\len{1}
\foreach \i in {0, 1, 2, 3}{
\foreach \j in {0, 1, 2, 3}{
\coordinate(x^\j_\i) at (\i, \j);}}
\draw (x^0_0) -- (x^0_3) -- (x^3_3) -- (x^3_0) -- (x^0_0); \draw (x^0_0) -- (x^3_3);
\draw (x^1_0) -- (x^1_1) -- (x^0_1);
\draw (x^1_2) -- (x^1_3) -- (x^0_2) -- (x^1_2);
\draw (x^2_0) -- (x^3_1) -- (x^2_1) -- (x^2_0);
\draw (x^3_2) -- (x^2_2) -- (x^2_3);
\foreach \i in {0, 1, 2, 3}{
\foreach \j in {0, 1, 2, 3}{
\draw(x^\j_\i)[fill=white] circle(\vr);}   }
\draw(x^0_0)node[below]{\footnotesize{311}}; \draw(x^0_1)node[below]{\footnotesize{312}}; \draw(x^0_2)node[below] {\footnotesize{321}}; \draw(x^0_3)node[below] {\footnotesize{322}};
\draw(x^1_0)node[below right]{\footnotesize{310}}; \draw(x^1_1)node[below right]{\footnotesize{313}}; \draw(x^1_2)node[below right] {\footnotesize{320}}; \draw(x^1_3)node[above left] {\footnotesize{323}};
\draw(x^2_0)node[below right]{\footnotesize{301}}; \draw(x^2_1)node[above left]{\footnotesize{302}}; \draw(x^2_2)node[left] {\footnotesize{331}}; \draw(x^2_3)node[above left] {\footnotesize{332}};
\draw(x^3_0)node[above]{\footnotesize{300}}; \draw(x^3_1)node[above]{\footnotesize{303}}; \draw(x^3_2)node[above] {\footnotesize{330}}; \draw(x^3_3)node[above] {\footnotesize{333}};
\end{tikzpicture}}
\end{subfigure}
\caption{The subgraph of $S^3_{K_{4}-e}$ induced by the set of vertices $V(3S^2_{K_4-e})$, its partial packing coloring, and the notation of its vertices on the right-hand side.}
\label{fig:3S^2}
\end{figure}
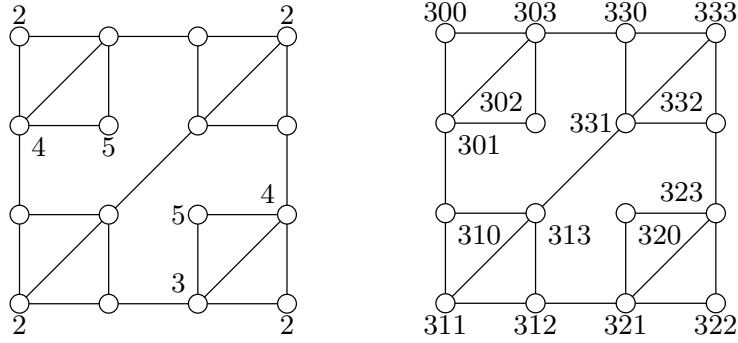

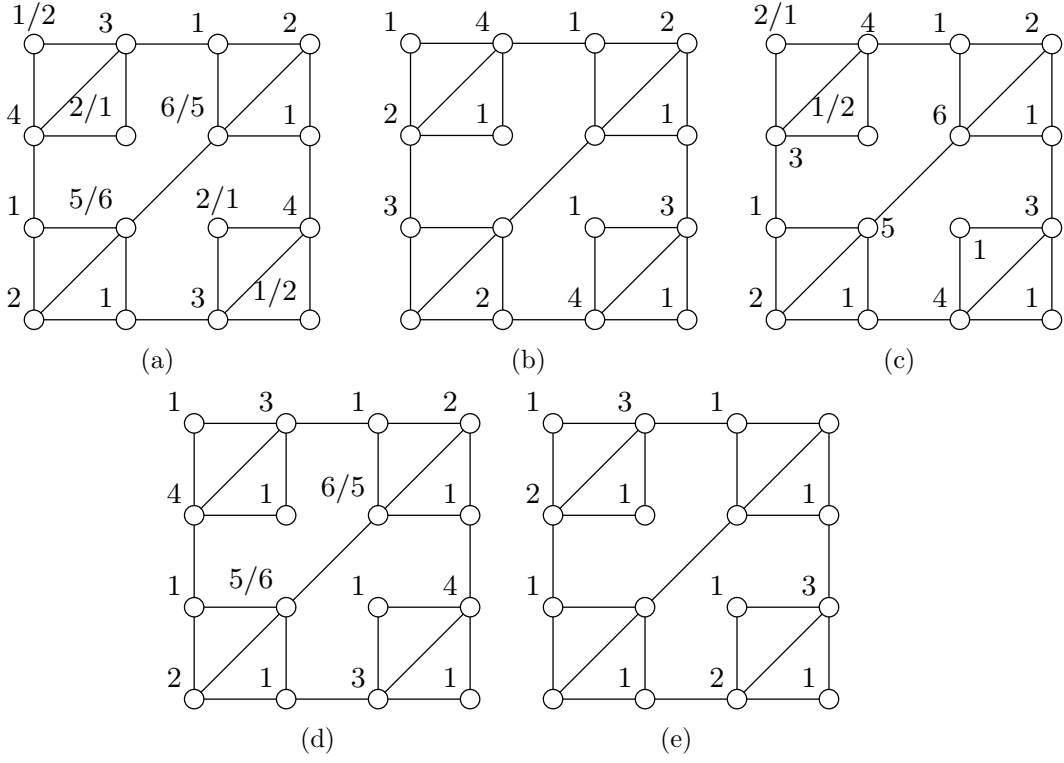
\begin{figure}[h]
\begin{subfigure}[b]{0.32\textwidth}
        \centering
        \resizebox{\linewidth}{!}{
\begin{tikzpicture}
\def\vr{3pt}
\def\len{1}
\foreach \i in {0, 1, 2, 3}{
\coordinate(x_\i) at (\i, 0);
\coordinate(y_\i) at (\i, 1);
\coordinate(z_\i) at (\i, 2);
\coordinate(w_\i) at (\i, 3);
}
\draw (x_0) --(x_1) -- (x_2) -- (x_3);
\draw (w_0) --(w_1) -- (w_2) -- (w_3);
\draw (y_0) -- (y_1);
\draw (y_2) -- (y_3);
\draw (z_0) -- (z_1);
\draw (z_2) -- (z_3);
\draw (x_0) -- (y_0) -- (z_0) -- (w_0);
\draw (x_1) -- (y_1);
\draw (x_2) -- (y_2);
\draw (z_1) -- (w_1);
\draw (z_2) -- (w_2);
\draw (x_3) -- (y_3) -- (z_3) -- (w_3);
\draw (x_0) -- (y_1) -- (z_2) -- (w_3);
\draw (z_0) -- (w_1);
\draw (x_2) -- (y_3);
\foreach \i in {0, 1, 2, 3}{
\draw(x_\i)[fill=white]circle(\vr); 
\draw(y_\i)[fill=white]circle(\vr); 
\draw(z_\i)[fill=white]circle(\vr); 
\draw(w_\i)[fill=white]circle(\vr); 
}
\draw(x_0)node[above left] {\footnotesize{2}};
\draw(x_1)node[above left] {\footnotesize{1}};
\draw(x_2)node[above left] {\footnotesize{3}};
\draw(x_3)node[above left] {\footnotesize{1/2}};
\draw(y_0)node[above left] {\footnotesize{1}};
\draw(y_1)node[above left] {\footnotesize{5/6}};
\draw(y_2)node[above] {\footnotesize{2/1}};
\draw(y_3)node[above left] {\footnotesize{4}};
\draw(z_0)node[above left] {\footnotesize{4}};
\draw(z_1)node[above left] {\footnotesize{2/1}};
\draw(z_2)node[above left] {\footnotesize{6/5}};
\draw(z_3)node[above left] {\footnotesize{1}};
\draw(w_0)node[above] {\footnotesize{1/2}};
\draw(w_1)node[above left] {\footnotesize{3}};
\draw(w_2)node[above left] {\footnotesize{1}};
\draw(w_3)node[above left] {\footnotesize{2}};
\end{tikzpicture} }
\caption{} 
\label{fig:A}
\end{subfigure}
%
%
%
    \begin{subfigure}[b]{0.31\textwidth}
        \centering
        \resizebox{\linewidth}{!}{			
\begin{tikzpicture}
\def\vr{3pt}
\def\len{1}
\foreach \i in {0, 1, 2, 3}{
\coordinate(x_\i) at (\i, 0);
\coordinate(y_\i) at (\i, 1);
\coordinate(z_\i) at (\i, 2);
\coordinate(w_\i) at (\i, 3); }
\draw (x_0) --(x_1) -- (x_2) -- (x_3);
\draw (w_0) --(w_1) -- (w_2) -- (w_3);
\draw (y_0) -- (y_1);
\draw (y_2) -- (y_3);
\draw (z_0) -- (z_1);
\draw (z_2) -- (z_3);
\draw (x_0) -- (y_0) -- (z_0) -- (w_0);
\draw (x_1) -- (y_1);
\draw (x_2) -- (y_2);
\draw (z_1) -- (w_1);
\draw (z_2) -- (w_2);
\draw (x_3) -- (y_3) -- (z_3) -- (w_3);
\draw (x_0) -- (y_1) -- (z_2) -- (w_3);
\draw (z_0) -- (w_1);
\draw (x_2) -- (y_3);
\foreach \i in {0, 1, 2, 3}{
\draw(x_\i)[fill=white]circle(\vr); 
\draw(y_\i)[fill=white]circle(\vr); 
\draw(z_\i)[fill=white]circle(\vr); 
\draw(w_\i)[fill=white]circle(\vr); 
}
\draw(x_1)node[above left] {\footnotesize{2}};
\draw(x_2)node[above left] {\footnotesize{4}};
\draw(x_3)node[above left] {\footnotesize{1}};
\draw(y_0)node[above left] {\footnotesize{3}};
\draw(y_2)node[above left] {\footnotesize{1}};
\draw(y_3)node[above left] {\footnotesize{3}};
\draw(z_0)node[above left] {\footnotesize{2}};
\draw(z_1)node[above left] {\footnotesize{1}};
\draw(z_3)node[above left] {\footnotesize{1}};
\draw(w_0)node[above left] {\footnotesize{1}};
\draw(w_1)node[above left] {\footnotesize{4}};
\draw(w_2)node[above left] {\footnotesize{1}};
\draw(w_3)node[above left] {\footnotesize{2}};
\end{tikzpicture}}
\caption{} 
\label{fig:B}
\end{subfigure}
%
%
\begin{subfigure}[b]{0.32\textwidth}
        \centering
        \resizebox{\linewidth}{!}{			
\begin{tikzpicture}
\def\vr{3pt}
\def\len{1}
\foreach \i in {0, 1, 2, 3}{
\coordinate(x_\i) at (\i, 0);
\coordinate(y_\i) at (\i, 1);
\coordinate(z_\i) at (\i, 2);
\coordinate(w_\i) at (\i, 3);
}
\draw (x_0) --(x_1) -- (x_2) -- (x_3);
\draw (w_0) --(w_1) -- (w_2) -- (w_3);
\draw (y_0) -- (y_1);
\draw (y_2) -- (y_3);
\draw (z_0) -- (z_1);
\draw (z_2) -- (z_3);
\draw (x_0) -- (y_0) -- (z_0) -- (w_0);
\draw (x_1) -- (y_1);
\draw (x_2) -- (y_2);
\draw (z_1) -- (w_1);
\draw (z_2) -- (w_2);
\draw (x_3) -- (y_3) -- (z_3) -- (w_3);
\draw (x_0) -- (y_1) -- (z_2) -- (w_3);
\draw (z_0) -- (w_1);
\draw (x_2) -- (y_3);
\foreach \i in {0, 1, 2, 3}{
\draw(x_\i)[fill=white]circle(\vr); 
\draw(y_\i)[fill=white]circle(\vr); 
\draw(z_\i)[fill=white]circle(\vr); 
\draw(w_\i)[fill=white]circle(\vr); 
}
\draw(x_0)node[above left] {\footnotesize{2}};
\draw(x_1)node[above left] {\footnotesize{1}};
\draw(x_2)node[above left] {\footnotesize{4}};
\draw(x_3)node[above left] {\footnotesize{1}};
\draw(y_0)node[above left] {\footnotesize{1}};
\draw(y_1)node[right] {\footnotesize{5}};
\draw(y_2)node[below right] {\footnotesize{1}};
\draw(y_3)node[above left] {\footnotesize{3}};
\draw(z_0)node[below right] {\footnotesize{3}};
\draw(z_1)node[above left] {\footnotesize{1/2}};
\draw(z_2)node[above left] {\footnotesize{6}};
\draw(z_3)node[above left] {\footnotesize{1}};
\draw(w_0)node[above] {\footnotesize{2/1}};
\draw(w_1)node[above] {\footnotesize{4}};
\draw(w_2)node[above left] {\footnotesize{1}};
\draw(w_3)node[above left] {\footnotesize{2}};
\end{tikzpicture}
}
\caption{} 
\label{fig:C}
\end{subfigure}
\hspace*{2cm}
\begin{subfigure}[b]{0.32\textwidth}
        \centering
        \resizebox{\linewidth}{!}{			
\begin{tikzpicture}
\def\vr{3pt}
\def\len{1}
\foreach \i in {0, 1, 2, 3}{
\coordinate(x_\i) at (\i, 0);
\coordinate(y_\i) at (\i, 1);
\coordinate(z_\i) at (\i, 2);
\coordinate(w_\i) at (\i, 3);
}
\draw (x_0) --(x_1) -- (x_2) -- (x_3);
\draw (w_0) --(w_1) -- (w_2) -- (w_3);
\draw (y_0) -- (y_1);
\draw (y_2) -- (y_3);
\draw (z_0) -- (z_1);
\draw (z_2) -- (z_3);
\draw (x_0) -- (y_0) -- (z_0) -- (w_0);
\draw (x_1) -- (y_1);
\draw (x_2) -- (y_2);
\draw (z_1) -- (w_1);
\draw (z_2) -- (w_2);
\draw (x_3) -- (y_3) -- (z_3) -- (w_3);
\draw (x_0) -- (y_1) -- (z_2) -- (w_3);
\draw (z_0) -- (w_1);
\draw (x_2) -- (y_3);
\foreach \i in {0, 1, 2, 3}{
\draw(x_\i)[fill=white]circle(\vr); 
\draw(y_\i)[fill=white]circle(\vr); 
\draw(z_\i)[fill=white]circle(\vr); 
\draw(w_\i)[fill=white]circle(\vr); 
}
\draw(x_0)node[above left] {\footnotesize{2}};
\draw(x_1)node[above left] {\footnotesize{1}};
\draw(x_2)node[above left] {\footnotesize{3}};
\draw(x_3)node[above left] {\footnotesize{1}};
\draw(y_0)node[above left] {\footnotesize{1}};
\draw(y_1)node[above left] {\footnotesize{5/6}};
\draw(y_2)node[above left] {\footnotesize{1}};
\draw(y_3)node[above left] {\footnotesize{4}};
\draw(z_0)node[above left] {\footnotesize{4}};
\draw(z_1)node[above left] {\footnotesize{1}};
\draw(z_2)node[above left] {\footnotesize{6/5}};
\draw(z_3)node[above left] {\footnotesize{1}};
\draw(w_0)node[above left] {\footnotesize{1}};
\draw(w_1)node[above left] {\footnotesize{3}};
\draw(w_2)node[above left] {\footnotesize{1}};
\draw(w_3)node[above left] {\footnotesize{2}};
\end{tikzpicture} }
\caption{} 
\label{fig:D}
\end{subfigure}
%
%
\begin{subfigure}[b]{0.32\textwidth}
        \centering
        \resizebox{\linewidth}{!}{			
\begin{tikzpicture}
\def\vr{3pt}
\def\len{1}
\foreach \i in {0, 1, 2, 3}{
\coordinate(x_\i) at (\i, 0);
\coordinate(y_\i) at (\i, 1);
\coordinate(z_\i) at (\i, 2);
\coordinate(w_\i) at (\i, 3);
}
\draw (x_0) --(x_1) -- (x_2) -- (x_3);
\draw (w_0) --(w_1) -- (w_2) -- (w_3);
\draw (y_0) -- (y_1);
\draw (y_2) -- (y_3);
\draw (z_0) -- (z_1);
\draw (z_2) -- (z_3);
\draw (x_0) -- (y_0) -- (z_0) -- (w_0);
\draw (x_1) -- (y_1);
\draw (x_2) -- (y_2);
\draw (z_1) -- (w_1);
\draw (z_2) -- (w_2);
\draw (x_3) -- (y_3) -- (z_3) -- (w_3);
\draw (x_0) -- (y_1) -- (z_2) -- (w_3);
\draw (z_0) -- (w_1);
\draw (x_2) -- (y_3);
\foreach \i in {0, 1, 2, 3}{
\draw(x_\i)[fill=white]circle(\vr); 
\draw(y_\i)[fill=white]circle(\vr); 
\draw(z_\i)[fill=white]circle(\vr); 
\draw(w_\i)[fill=white]circle(\vr); 
}
\draw(x_1)node[above left] {\footnotesize{1}};
\draw(x_2)node[above left] {\footnotesize{2}};
\draw(x_3)node[above left] {\footnotesize{1}};
\draw(y_0)node[above left] {\footnotesize{1}};
\draw(y_2)node[above left] {\footnotesize{1}};
\draw(y_3)node[above left] {\footnotesize{3}};
\draw(z_0)node[above left] {\footnotesize{2}};
\draw(z_1)node[above left] {\footnotesize{1}};
\draw(z_3)node[above left] {\footnotesize{1}};
\draw(w_0)node[above left] {\footnotesize{1}};
\draw(w_1)node[above left] {\footnotesize{3}};
\draw(w_2)node[above left] {\footnotesize{1}};
\end{tikzpicture} }
\caption{} 
\label{fig:E}
\end{subfigure}
\caption{$6$-packing colorings of $S^2_{K_4-e}$.}
\label{slika}
\end{figure}

\textbf{Case 3.} $|c^{-1}(1)\cap V(3S^2_{K_4-e})| = 6 $.  \\
We must color exactly two vertices in (at least) two distinct subgraphs of $3S^2_{K_4-e}$, which are isomorphic to $K_4-e$, by color $1$. 
If $1$ is used twice in $30S^1_{K_4-e}$ (respectively in $32S^1_{K_4-e}$ or in both), then $|c^{-1}(2)\cap V(3S^2_{K_4-e})| \leq 3$ and $|c^{-1}(5)\cap V(3S^2_{K_4-e})| \leq 1$. Thus $\sum_{i=1}^6 |c^{-1}(i) \cap V(3S^2_{K_4-e})| \leq 15$, which implies the claimed result. 
Therefore color $1$ appears exactly once in $30S^1_{K_4-e}$ and also exactly once in $32S^1_{K_4-e}$. Since $|c^{-1}(1)\cap V(3S^2_{K_4-e})| = 6 $, $c(310)=c(312)=c(330)=c(332)=1$ and we have to color exactly two vertices from $\{300, 302, 320, 322\}$ with color $1$.\\
If two of the colors from $\{3, 4, 5\}$ are used by $c$ to color vertices from $\{311$, $313$, $331$, $333\}$, then each of them can be used in $3S^2_{K_4-e}$ only once (and the other at most twice). But then $c$ is not a $6$-packing coloring of $3S^2_{K_4-e}$, since $\sum_{i=1}^6 |c^{-1}(i) \cap V(3S^2_{K_4-e})| \leq 15$. Therefore vertices from $\{311$, $313$, $331$, $333\}$ can be colored by colors $2$, $6$ and one of the colors $3, 4, 5$, which means that two of the listed vertices get color $2$, namely $311$ and $333$. \\
If $|c^{-1}(5) \cap V(3S^2_{K_4-e})| = 2$, then $|c^{-1}(2) \cap V(3S^2_{K_4-e})| = 2$, but then again we get the same conclusion, $\sum_{i=1}^6 |c^{-1}(i) \cap V(3S^2_{K_4-e})| \leq 14$. Hence $|c^{-1}(5) \cap V(3S^2_{K_4-e})| = 1$. Thus $|c^{-1}(4) \cap V(3S^2_{K_4-e})|=2$ and $|c^{-1}(2) \cap V(3S^2_{K_4-e})|=4$. Then vertices $300, 302, 320, 322$ get colors $1$ or $2$ and without loss of generality we may assume $c(301)=c(323)=4$. Therefore $c(303)=c(321)=3$. The remaining two vertices are colored by colors $5$ and $6$. The described coloring, which is clearly a $6$-packing coloring of $3S^2_{K_4-e}$, is shown in Fig.~\ref{fig:A}. \\
Now, consider the coloring $c$ in the remaining part of the subgraph, that is, in $03S^1_{K_4-e}$ and $23S^1_{K_4-e}$. By using the described coloring, vertex $033$ can be colored by one of the colors from $\{1, 2, 7\}$. Suppose that $c(033) \neq 7$. Then one of the vertices from $\{030, 031, 032\}$ cannot be colored by any of the colors from $\{1, 2, 3, 4, 5, 6\}$ and therefore must get color $7$. In either case, color $7$ is used in $03S^1_{K_4-e}$.
By symmetry one of the vertices from $\{230, 231, 232, 233\}$ is colored by $7$. In conclusion, if $|c^{-1}(1)\cap V(3S^2_{K_4-e})| = 6 $, color $7$ is used in both $03S^1_{K_4-e}$ and $23S^1_{K_4-e}$.

\textbf{Case 4.} $|c^{-1}(1)\cap V(3S^2_{K_4-e})|=7$. \\
Since $3S^2_{K_4-e}$ contains $4$ distinct copies of subgraphs $K_4-e$, in exactly one of them exist only one vertex colored by color $1$, and the others contain two such vertices. Consider the position of this copy of $K_4-e$ in $3S^2_{K_4-e}$. \\
Assume first that the subgraph isomorphic to $K_4-e$ with only one vertex colored by $1$ is $31S^1_{K_4-e}$. Then $c(300)=c(302)=c(320)=c(322)=c(330)=c(332)=1$, and thus $|c^{-1}(2)\cap V(3S^2_{K_4-e})| \leq 3 $, $|c^{-1}(5)\cap V(3S^2_{K_4-e}) | \leq 1$. Then $\sum_{i=1}^6 |c^{-1}(i) \cap V(3S^2_{K_4-e})| \leq 16$, and hence each of the numbers $|c^{-1}(i) \cap V(3S^2_{K_4-e})|$, $1 \leq i \leq 6$, must reach the established upper bound, otherwise we are done in this case. Without loss of generality we may assume that $c(303)=c(321)=4$. Then $c(323)=3$. 
If $c(301)$ is also $3$, then $|c^{-1}(2)\cap V(3S^2_{K_4-e})| \leq 2$, and in this case color $7$ is required ($\sum_{i=1}^6 |c^{-1}(i) \cap V(3S^2_{K_4-e})| \leq 15$). 
Hence $c(310)=3$, which readily implies $c(301)=c(312)=c(333)=2$. 
The remaining three vertices are colored by colors $1, 5$ and $6$. Clearly, this is a $6$-packing coloring of $3S^2_{K_4-e}$, partially shown in Fig.~\ref{fig:B} (for the last three vertices colors are omitted). 
Vertex $033$ must be colored by color $7$ and vertex $233$ can be colored only by colors $2$ or $7$. If $c(233) \neq 7$, then one of the vertices from $\{230, 231, 232\}$ is colored by $7$.  Again color $7$ appears in both, $03S^1_{K_4-e}$ and $23S^1_{K_4-e}$. Analogous arguments work if there is only one vertex colored by $1$ in $33S^1_{K_4-e}$.\\
For the second case, we may assume without loss of generality that there is only one vertex colored by $1$ in $30S^1_{K_4-e}$. 
Then $c(310)=c(312)=c(320)=c(322)=c(330)=c(332)=1$. Consequently $|c^{-1}(2)\cap V(3S^2_{K_4-e})| \leq 3 $, $|c^{-1}(5)\cap V(3S^2_{K_4-e})| \leq 1$ and thus $\sum_{i=1}^6 |c^{-1}(i) \cap V(3S^2_{K_4-e})| \leq 16$. Then in $3S^2_{K_4-e}$ there are two vertices colored by $4$, and also two colored by $3$. Let $c(321)=4$ and $c(323)=3$ (respectively $c(321)=3$ and $c(323)=4$).
Further, three vertices of $3S^2_{K_4-e}$ must be colored by $2$, two of them are certainly $311$ and $333$. Vertices $300$ and $302$ are colored by colors from $\{1, 2\}$, $c(301)=3$, $c(303)=4$ (respectively $c(301)=4$ and $c(303)=3$), and then $c(313)=5$, $c(331)=6$ (respectively $c(313)=6$, $c(331)=5$). The described coloring, which is clearly a $6$-packing coloring of $3S^2_{K_4-e}$, is the second possible $6$-packing coloring of $3S^2_{K_4-e}$, if $|c^{-1}(1) \cap V(3S^2_{K_4-e})|=7$. It is shown in Fig.~\ref{fig:C}.
But analogously as above we find that if $7$ is not used in $3S^2_{K_4-e}$, it appears in $03S^1_{K_4-e}$ and also in $23S^1_{K_4-e}$.

%
\textbf{Case 5.} $|c^{-1}(1)\cap V(3S^2_{K_4-e})|=8$. \\
All vertices, which get color $1$, are uniquely determined. In this case the upper bounds for the numbers $|c^{-1}(i) \cap V(3S^2_{K_4-e})|$ clearly decrease from the initial ones in the cases $i=2$ and $i=5$. We thus have $|c^{-1}(2)\cap V(3S^2_{K_4-e})| \leq 3 $ and $|c^{-1}(5)\cap V(3S^2_{K_4-e})| \leq 1$. 
If $|c^{-1}(4)\cap V(3S^2_{K_4-e})| =2 $, then $|c^{-1}(2)\cap V(3S^2_{K_4-e})| \leq 2 $ and $\sum_{i=1}^6 |c^{-1}(i) \cap V(3S^2_{K_4-e})| \leq 16$. Therefore each of the numbers $|c^{-1}(i) \cap V(3S^2_{K_4-e})|$, $1 \leq i \leq 6$, must reach the established upper bound. Without loss of generality we may assume that $c(323)=c(301)=4$. Then $c(303)=c(321)=3$ and $c(311)=c(333)=2$. The other two vertices get colors $5$ and $6$. By the same consideration as in previous cases, it follows that $7$ is used in $03S^1_{K_4-e}$ and also in $23S^1_{K_4-e}$, if it is not used in $3S^2_{K_4-e}$. \\
The second possibility is that $|c^{-1}(4)\cap V(3S^2_{K_4-e})| =1 $. 
Then $|c^{-1}(2)\cap V(3S^2_{K_4-e})| = 3$. 
Since the vertices colored by $1$ are determined, without loss of generality we may assume that $c(301)=c(321)=2$ and then $c(303)=c(323)=3$. The remaining vertices get colors $2, 4, 5$ and $6$. This is $6$-packing coloring of $3S^2_{K_4-e}$, but we can see that both vertices, $033$ and $233$, can get only color $7$. 
Both possibilities for a $6$-packing coloring of $3S^2_{K_4-e}$, if $|c^{-1}(1)\cap V(3S^2_{K_4-e})|=8$, are (partially) shown in Fig.~\ref{fig:D} and~\ref{fig:E}.

In conclusion, if $7$ is not used in $3S^2_{K_4-e}$, it appears in $03S^1_{K_4-e}$ and also in $23S^1_{K_4-e}$.  
Due to symmetry the analogous observation can be proven for $1S^2_{K_4-e}$.
\qed
\end{proof}


\begin{theorem}
If $n\geq 1$ and $S^n_{K_4-e}$ is the generalized Sierpi\'nski graph of $K_4-e$ of dimension $n$, then
$$\chi_{\rho}(S^n_{K_4-e}) =
\left\{\begin{array}{ll}
3; & n=1\,,\\
6; & n=2\,,\\
8; & n=3\,.
\end{array}\right.$$
Furthermore, if $n \geq 4$, then $8 \leq \chi_\rho(S^n_{K_4-e}) \leq 11 $. 
\label{izrek_K4-e}
\end{theorem}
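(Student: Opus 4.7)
We treat the four regimes in turn.

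For $n=1$, since $S^1_{K_4-e}\cong K_4-e$, we have $\chi_\rho=3$: the two non-adjacent (degree-$2$) vertices share color $1$, the remaining adjacent pair takes colors $2$ and $3$, and the triangle in $K_4-e$ forces at least three colors.

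For $n=2$, the upper bound $\chi_\rho(S^2_{K_4-e})\le 6$ is witnessed by an explicit $6$-packing coloring displayed in Figure~\ref{slika}. For the lower bound $\chi_\rho(S^2_{K_4-e})>5$, we combine the capacities $|c^{-1}(1)|\le 8$, $|c^{-1}(2)|\le 4$, $|c^{-1}(3)|\le 2$, $|c^{-1}(4)|\le 2$, $|c^{-1}(5)|\le 2$ of a hypothetical $5$-packing coloring (each bound obtained via a short BFS) with a case analysis on how color $1$ is distributed among the four copies of $K_4-e$; the five inter-copy edges of $S^2_{K_4-e}$ then force a contradiction.

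For $n=3$, the upper bound is established by an explicit $8$-packing coloring. For the lower bound, suppose toward contradiction that $c$ is a $7$-packing coloring of $S^3_{K_4-e}$. By Lemma~\ref{7pakirno}, color $7$ appears in both vertex-disjoint subgraphs described there; combined with the moreover clause, the placements of color $7$ reduce to four cases: (1) in $3S^2_{K_4-e}$ and $1S^2_{K_4-e}$; (2) in $3S^2_{K_4-e}$ and in both $01S^1_{K_4-e}$ and $21S^1_{K_4-e}$; (3) the version of (2) obtained by interchanging $1$ and $3$; (4) in each of $01S^1_{K_4-e}$, $03S^1_{K_4-e}$, $21S^1_{K_4-e}$, $23S^1_{K_4-e}$. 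Since the eccentricity of the vertex $11$ in $S^2_{K_4-e}$ equals $3$ (verified by BFS), routing through the edge $311\leftrightarrow 133$ gives $d(u,v)\le 3+1+3=7$ for every $u\in 3S^2_{K_4-e}$ and $v\in 1S^2_{K_4-e}$, ruling out (1). Case (4) fails since $d(v,v')\le 3$ for any $v\in 01S^1_{K_4-e}$ and $v'\in 03S^1_{K_4-e}$ (both contained in $0S^2_{K_4-e}$). Cases (2) and (3) follow from the BFS observation that no vertex of $S^2_{K_4-e}$ lies at distance $\ge 4$ from both $00$ and $22$: via the edges $300\leftrightarrow 033$ and $322\leftrightarrow 233$ this forces $\min\{d(u,01S^1_{K_4-e}),d(u,21S^1_{K_4-e})\}\le 7$ for every $u\in 3S^2_{K_4-e}$. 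All four cases yield contradictions.

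For $n\ge 4$, the lower bound $\chi_\rho(S^n_{K_4-e})\ge 8$ follows since $S^n_{K_4-e}$ contains an isomorphic copy of $S^3_{K_4-e}$ (argued inductively as in Corollary~\ref{cor:K4}). For the upper bound $\le 11$, we construct an $11$-packing coloring via a recursive template: design an $11$-packing coloring of $S^k_{K_4-e}$ for a suitable base $k$ such that the colors at the extreme vertices $i^k$ permit replicating the coloring across every $S^k$-subcopy of $S^n_{K_4-e}$ without violating the packing condition along the inter-copy edges introduced at higher recursive levels. The main obstacle of the entire proof is designing this template and verifying, via distance estimates along chains of inter-copy edges, that each of the eleven color classes remains a valid $i$-packing throughout the recursion.
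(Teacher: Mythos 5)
Your overall architecture coincides with the paper's: explicit colorings for the upper bounds, a capacity count plus case analysis for the lower bound at $n=2$, Lemma~\ref{7pakirno} for the lower bound at $n=3$, containment of $S^3_{K_4-e}$ for $n\ge 4$, and a periodic template for the upper bound $11$. Your treatment of the $n=3$ lower bound is in fact more explicit than the paper's: the paper merely asserts that ``regardless of which vertices are colored by color $7$, they will be too close,'' whereas your four-case analysis with the eccentricity-$3$ gateways $311$ and $133$, the bound $\min\{d(u,300),d(u,322)\}\le 3$ for $u\in 3S^2_{K_4-e}$, and the observation that $01S^1_{K_4-e}$ and $03S^1_{K_4-e}$ lie within distance $3$ of each other inside $0S^2_{K_4-e}$ is correct and fills in exactly what the paper leaves to the reader.

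However, the proposal has a genuine gap at its most labor-intensive point: the upper bound $\chi_\rho(S^n_{K_4-e})\le 11$ for $n\ge 4$. You describe what a recursive template must accomplish but do not exhibit one, and you yourself call its design ``the main obstacle of the entire proof.'' This cannot be deferred: the paper's construction (Fig.~\ref{fig:11pakirnoA}) is not a pure replication of a single colored block --- colors $1$ through $5$ and $7$ repeat with period $S^4_{K_4-e}$, while the placement of colors $6$ and $8$--$11$ depends on the position $a$ of the $S^4$-copy inside its enclosing $S^5$-copy --- and the subsequent verification is a substantial sequence of distance estimates relating each high-colored vertex to the extreme vertices of its block. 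Without an explicit template, the claim that the bound is $11$ (rather than some other constant) is unsupported. Two smaller issues: (i) your lower-bound argument for $n=2$ replaces the paper's case split on $|c^{-1}(5)|$ and on which of the corner vertices $00,02,20,22$ receive which colors by a case split on the distribution of color $1$ over the four $K_4-e$ blocks; since the raw capacities sum to $18>16$, some interaction argument of this kind is essential, and yours is only asserted, not carried out; (ii) the $6$- and $8$-packing colorings for $n=2,3$ are likewise only claimed to exist, and the $6$-packing coloring of the whole of $S^2_{K_4-e}$ is the one in Fig.~\ref{fig:S2K4-e}, not the partial block colorings collected in Fig.~\ref{slika}.
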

\begin{proof} 
The graph $S^1_{K_4-e}$ is isomorphic to $K_4-e$ and it is clear that $\chi_\rho(K_4-e)=3$. 

Since the graph $S^2_{K_4-e}$ contains a subgraph isomorphic to $S^2_3$ (shown in Fig.~\ref{fig:S2K4-e}), we derive by using~\cite[Theorem 1.1]{bkr-2016} that $\chi_\rho(S^2_{K_4-e}) \geq 5$.
Suppose that there exists a $5$-packing coloring $c$ of $S^2_{K_4-e}$. The following inequalities hold for the number of vertices that can be colored by a given color:  $$|c^{-1}(1)| \leq 8,\,\, |c^{-1}(2)| \leq 4,\,\, |c^{-1}(3)| \leq 2 , \,\,|c^{-1}(4)| \leq 2,\,\, |c^{-1}(5)| \leq 2 .$$ 
The equality $|c^{-1}(2)| = 4 $ holds only if two of the vertices $00$, $02$, $20$ and $22$ are colored by $2$, thus we can color at most $11$ vertices by colors $1$ and $2$.

\begin{figure}[h]
\hspace*{3cm}
\begin{subfigure}[b]{0.35\textwidth}
        \centering
        \resizebox{\linewidth}{!}{
\begin{tikzpicture}
\def\vr{3pt}
\def\len{1}

\foreach \i in {0, 1, 2, 3}{
\coordinate(x_\i) at (\i, 0);
\coordinate(y_\i) at (\i, 1);
\coordinate(z_\i) at (\i, 2);
\coordinate(w_\i) at (\i, 3); }
\draw[orange, thick] (x_0) --(x_1) -- (x_2) -- (x_3);
\draw[orange, thick] (z_2) -- (z_3);
\draw[orange, thick] (x_1) -- (y_1);
\draw[orange, thick] (x_3) -- (y_3) -- (z_3) -- (w_3);
\draw[orange, thick] (x_2) -- (y_3);
\draw[orange, thick] (x_0) -- (y_1) -- (z_2) -- (w_3);
\draw (x_2) -- (y_2) -- (y_3);
\draw (x_0) -- (y_0) -- (y_1);
\draw (y_0) -- (z_0) -- (w_0) -- (w_1) -- (w_2) -- (w_3);
\draw (z_0) -- (z_1) -- (w_1);
\draw (z_0) -- (w_1);
\draw (z_2) -- (w_2);
\foreach \i in {0, 1, 2, 3}{
\draw(x_\i)[fill=orange] circle(\vr); }
\draw(y_1)[fill=orange] circle(\vr);  \draw(y_3)[fill=orange] circle(\vr);  \draw(z_2)[fill=orange] circle(\vr);  \draw(z_3)[fill=orange] circle(\vr);  \draw(w_3)[fill=orange] circle(\vr); 
\draw(y_0)[fill=white] circle(\vr);  \draw(y_2)[fill=white] circle(\vr); \draw(z_0)[fill=white] circle(\vr); \draw(z_1)[fill=white] circle(\vr); \draw(w_0)[fill=white] circle(\vr); \draw(w_1)[fill=white] circle(\vr); \draw(w_2)[fill=white] circle(\vr); 
\foreach \i in {1, 3}{
\draw(x_\i)node[below] {\footnotesize{1}};
\draw(z_\i)node[right] {\footnotesize{1}}; }
\foreach \i in {0, 2}{
\draw(y_\i)node[left] {\footnotesize{1}};
\draw(w_\i)node[above] {\footnotesize{1}}; 
\draw(z_\i)node[left] {\footnotesize{2}};
}
\draw(x_2)node[below] {\footnotesize{2}};
\draw(y_3)node[right] {\footnotesize{3}};
\draw(w_1)node[above] {\footnotesize{3}};
\draw(x_0)node[below] {\footnotesize{6}};
\draw(y_1)node[above] {\footnotesize{5}};
\draw(w_3)node[above] {\footnotesize{4}};
\end{tikzpicture}
}
\end{subfigure}
%
%
\begin{subfigure}[b]{0.37\textwidth}
        \centering
        \resizebox{\linewidth}{!}{
\begin{tikzpicture}
\def\vr{3pt}
\def\len{1}
\foreach \i in {0, 1, 2, 3}{
\foreach \j in {0, 1, 2, 3}{
\coordinate(x^\j_\i) at (\i, \j);}}
\draw (x^0_0) -- (x^0_3) -- (x^3_3) -- (x^3_0) -- (x^0_0); \draw (x^0_0) -- (x^3_3);
\draw (x^1_0) -- (x^1_1) -- (x^0_1);
\draw (x^1_2) -- (x^1_3) -- (x^0_2) -- (x^1_2);
\draw (x^2_0) -- (x^3_1) -- (x^2_1) -- (x^2_0);
\draw (x^3_2) -- (x^2_2) -- (x^2_3);
\foreach \i in {0, 1, 2, 3}{
\foreach \j in {0, 1, 2, 3}{
\draw(x^\j_\i)[fill=white] circle(\vr);}   }
\draw(x^0_0)node[below]{\footnotesize{11}}; \draw(x^0_1)node[below]{\footnotesize{12}}; \draw(x^0_2)node[below] {\footnotesize{21}}; \draw(x^0_3)node[below] {\footnotesize{22}};
\draw(x^1_0)node[left]{\footnotesize{10}}; \draw(x^1_1)node[above left]{\footnotesize{13}}; \draw(x^1_2)node[left] {\footnotesize{20}}; \draw(x^1_3)node[right] {\footnotesize{23}};
\draw(x^2_0)node[left]{\footnotesize{01}}; \draw(x^2_1)node[above left]{\footnotesize{02}}; \draw(x^2_2)node[left] {\footnotesize{31}}; \draw(x^2_3)node[right] {\footnotesize{32}};
\draw(x^3_0)node[above]{\footnotesize{00}}; \draw(x^3_1)node[above]{\footnotesize{03}}; \draw(x^3_2)node[above] {\footnotesize{30}}; \draw(x^3_3)node[above] {\footnotesize{33}};
\end{tikzpicture}}
\end{subfigure}
\caption{Graph $S^2_{K_4-e}$, its $6$-packing coloring, and the notation of its vertices.}
\label{fig:S2K4-e}
\end{figure}
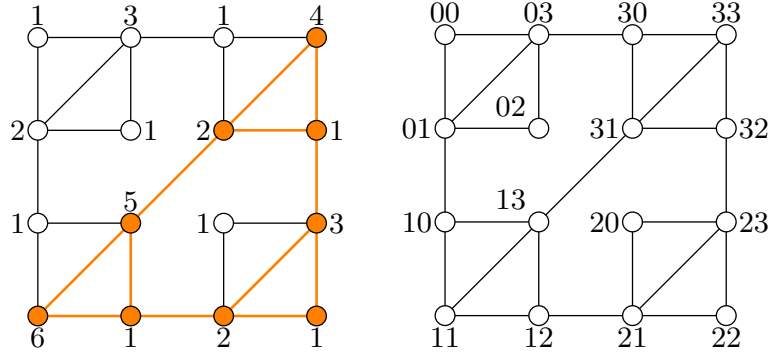

Suppose that $|c^{-1}(5)|=2$. The graph $S^2_{K_4-e}$ contains only four pairs of vertices ($00$ and $20$, $00$ and $22$, $02$ and $20$, $02$ and $22$), which are at distance $6$, and we have to color one pair of them by 5. But then there can be only $6$ vertices (two of them belong to \{$02$, $20$, $00$, $22$\}) colored by $1$. Therefore we can color by $2$ at most $3$ vertices. This is a contradiction, since $S^2_{K_4-e}$ has $16$ vertices, but we can color only $15$ of them. The same result follows, if $|c^{-1}(1)| \leq 5$. 

Alternatively, let $|c^{-1}(5)| = 1$. Hence $|c^{-1}(1)| \geq 7$, which means that at least three of the vertices $00$, $02$, $20$, $22$ get color $1$. Without loss of generality we may assume that $c(02)=c(20)=c(22)=1$.  
Suppose that $|c^{-1}(4)| =2 $. This is possible only if one vertex colored by $4$ is $21$ (or symmetrically $23$), and the other $03$ (respectively $01$). But then only $10$ vertices are colored by colors $1$ and $2$, and we get the same contradiction as above. 
Now, if $|c^{-1}(5)| = 1 $ and  $|c^{-1}(4)| \leq 1 $, we cannot color all $16$ vertices by colors from $\{1, 2, 3, 4, 5 \}$, since only $11$ vertices can be colored by colors $1$ and $2$. 

Therefore $\chi_\rho(S^2_{K_4-e}) \geq 6$. In Fig.~\ref{fig:S2K4-e} a 6-packing coloring of $S^2_{K_4-e}$ is shown, implying $\chi_{\rho}(S^2_{K_4-e})=6$.  

Next, we consider the packing coloring of $S^3_{K_4-e}$. Suppose that $\chi_\rho(S^3_{K_4-e})=7$. By Lemma~\ref{7pakirno} this is a contradiction, since we cannot form even a $7$-packing coloring of the subgraph induced by vertices $V(3S^2_{K_4-e}) \cup V(1S^2_{K_4-e})$ $\cup V(03S^1_{K_4-e}) \cup V(01S^1_{K_4-e}) \cup V(23S^1_{K_4-e}) \cup V(21S^1_{K_4-e})$. Notably, regardless of which vertices are colored by color $7$, they will be too close. Hence $\chi_\rho(S^3_{K_4-e})\geq 8$.

In Fig.~\ref{fig:8pakirno} an $8$-packing coloring of $S^3_{K_4-e}$ is shown. It is easy to verify that for any $i \in [8]$ the distance between any two vertices colored by color $i$ is bigger than $i$, and consequently $\chi_\rho(S^3_{K_4-e})=8$. 

%
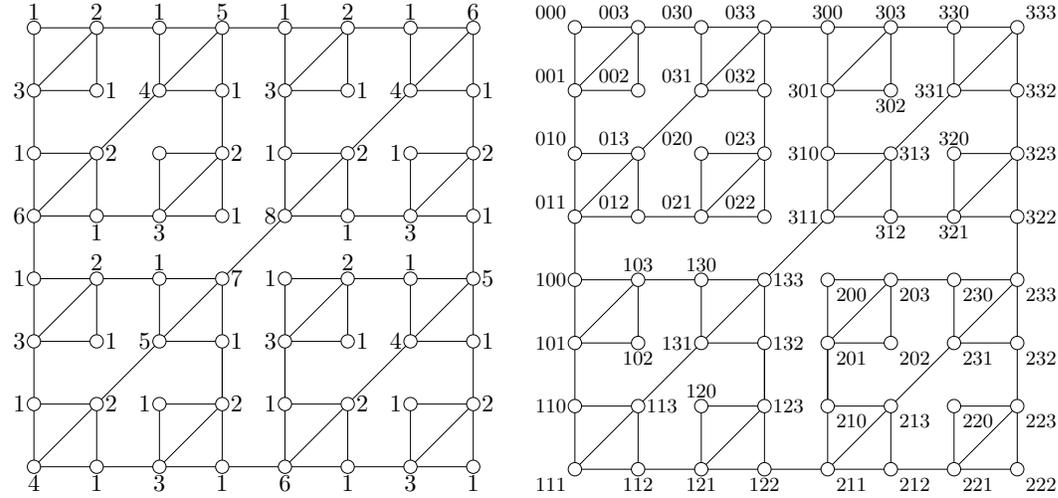
\begin{figure}[htb!]
\begin{subfigure}[b]{0.47\textwidth}
        \centering
        \resizebox{\linewidth}{!}{
\begin{tikzpicture}
\def\vr{3pt}
\def\len{1}
\foreach \i in {0, 1, 2, 3, 4, 5, 6, 7}{
\coordinate(x_\i) at (\i, 0);
\coordinate(y_\i) at (\i, 1);
\coordinate(z_\i) at (\i, 2);
\coordinate(w_\i) at (\i, 3);
\coordinate(u_\i) at (\i, 4);
\coordinate(v_\i) at (\i, 5);
\coordinate(o_\i) at (\i, 6);
\coordinate(p_\i) at (\i, 7);
}
\foreach \i in {0, 1, 2, 3, 4, 5, 6, 7}{
\draw (u_\i) -- (v_\i); \draw (o_\i) -- (p_\i); \draw (x_\i) -- (y_\i); \draw (z_\i) -- (w_\i); }
\draw (y_0) -- (z_0); \draw (y_3) -- (z_3); \draw (y_4) -- (z_4);\draw (y_7) -- (z_7);
\draw (v_0) -- (o_0); \draw (v_3) -- (o_3); \draw (v_4) -- (o_4); \draw (v_7) -- (o_7);
\draw (w_0) -- (u_0); \draw (w_7) -- (u_7);
\draw (x_0) -- (x_1); \draw (x_1) -- (x_2); \draw (x_2) -- (x_3); \draw (x_3) -- (x_4); \draw (x_4) -- (x_5); \draw (x_5) -- (x_6); \draw (x_6) -- (x_7);
\draw (p_0) -- (p_1); \draw (p_1) -- (p_2); \draw (p_2) -- (p_3); \draw (p_3) -- (p_4); \draw (p_4) -- (p_5); \draw (p_5) -- (p_6); \draw (p_6) -- (p_7);
\draw (z_0) -- (z_1); \draw (z_2) -- (z_3); \draw (z_4) -- (z_5); \draw (z_6) -- (z_7);
\draw (y_0) -- (y_1);\draw (y_2) -- (y_3); \draw (y_4) -- (y_5); \draw (y_6) -- (y_7);
\draw (v_0) -- (v_1); \draw (v_2) -- (v_3); \draw (v_4) -- (v_5); \draw (v_6) -- (v_7);
\draw (u_0) -- (u_1); \draw (u_1) -- (u_2); \draw (u_2) -- (u_3); \draw (u_4) -- (u_5); \draw (u_5) -- (u_6); \draw (u_6) -- (u_7);
\draw (w_0) -- (w_1); \draw (w_1) -- (w_2); \draw (w_2) -- (w_3); \draw (w_4) -- (w_5); \draw (w_5) -- (w_6); \draw (w_6) -- (w_7);
\draw (o_0) -- (o_1); \draw (o_2) -- (o_3); \draw (o_4) -- (o_5); \draw (o_6) -- (o_7);
\draw(y_3) -- (z_3); \draw(y_4) -- (z_4);
\draw (x_0) -- (y_1); \draw (x_2) -- (y_3); \draw (x_4) -- (y_5); \draw (x_6) -- (y_7);
\draw (z_0) -- (w_1); \draw (z_2) -- (w_3); \draw (z_4) -- (w_5); \draw (z_6) -- (w_7);
\draw (u_0) -- (v_1); \draw (u_2) -- (v_3); \draw (u_4) -- (v_5); \draw (u_6) -- (v_7);
\draw (o_0) -- (p_1); \draw (o_2) -- (p_3); \draw (o_4) -- (p_5); \draw (o_6) -- (p_7);
\draw (y_1) -- (z_2); \draw (v_1) -- (o_2); \draw (w_3) -- (u_4); \draw (y_5) -- (z_6); \draw (v_5) -- (o_6); 
\foreach \i in {0, 1, 2, 3, 4, 5, 6, 7}{
\draw(x_\i)[fill=white] circle(\vr);
\draw(y_\i)[fill=white] circle(\vr);
\draw(z_\i)[fill=white] circle(\vr);
\draw(w_\i)[fill=white] circle(\vr);
\draw(u_\i)[fill=white] circle(\vr);
\draw(v_\i)[fill=white] circle(\vr);
\draw(o_\i)[fill=white] circle(\vr);
\draw(p_\i)[fill=white] circle(\vr);
}
%
\foreach \i in {1, 3, 5}{
\draw(x_\i)node[below] {1};
\draw(z_\i)node[right] {1};
\draw(o_\i)node[right] {1};
}
\draw(u_1)node[below] {1};
\draw(u_5)node[below] {1};
\draw(u_3)node[right]{1};
\draw(x_7)node[below]{1};
\draw(z_7)node[right]{1};
\draw(u_7)node[right]{1};
\draw(o_7)node[right]{1};
\foreach \i in {0, 2, 4, 6}{
\draw(p_\i)node[above]{1};
\draw(y_\i)node[left]{1};
}
\foreach \i in {2, 6}{
\draw(w_\i)node[above]{1};
}
\draw(v_6)node[left]{1};
\foreach \i in {0, 4}{
\draw(w_\i)node[left]{1};
\draw(v_\i)node[left]{1};
}
%
\foreach \i in {1, 3, 5, 7}{
\draw(y_\i)node[right]{2};
\draw(v_\i)node[right]{2}; }
\foreach \i in {1, 5}{
\draw(w_\i)node[above]{2};
\draw(p_\i)node[above]{2}; }
%
\foreach \i in {0, 4} {
\draw(z_\i)node[left]{3};
\draw(o_\i)node[left]{3}; }
\foreach \i in {2, 6}{
\draw(x_\i)node[below]{3};
\draw(u_\i)node[below]{3}; }
%
\draw(x_0)node[below]{4};
\draw(z_6)node[left]{4};
\draw(o_2)node[left]{4};
\draw(o_6)node[left]{4};
%
\draw(z_2)node[left]{5};
\draw(w_7)node[right]{5};
\draw(p_3)node[above]{5};
%
\draw(x_4)node[below]{6};
\draw(u_0)node[left]{6};
\draw(p_7)node[above]{6};
\draw(u_4)node[left]{8};
\draw(w_3)node[right]{7};
\end{tikzpicture}
}
\end{subfigure}
%
%
\begin{subfigure}[b]{0.5\textwidth}
        \centering
        \resizebox{\linewidth}{!}{
\begin{tikzpicture}
\def\vr{3pt}
\def\len{1}
\foreach \i in {0, 1, 2, 3, 4, 5, 6, 7}{
\coordinate(x_\i) at (\i, 0);
\coordinate(y_\i) at (\i, 1);
\coordinate(z_\i) at (\i, 2);
\coordinate(w_\i) at (\i, 3);
\coordinate(u_\i) at (\i, 4);
\coordinate(v_\i) at (\i, 5);
\coordinate(o_\i) at (\i, 6);
\coordinate(p_\i) at (\i, 7);
}
\foreach \i in {0, 1, 2, 3, 4, 5, 6, 7}{
\draw (u_\i) -- (v_\i);
\draw (o_\i) -- (p_\i);
\draw (x_\i) -- (y_\i);
\draw (z_\i) -- (w_\i);
}
\draw (y_0) -- (z_0); \draw (y_3) -- (z_3); \draw (y_4) -- (z_4);\draw (y_7) -- (z_7);
\draw (v_0) -- (o_0); \draw (v_3) -- (o_3); \draw (v_4) -- (o_4); \draw (v_7) -- (o_7);
\draw (w_0) -- (u_0); \draw (w_7) -- (u_7);
\draw (x_0) -- (x_1); \draw (x_1) -- (x_2); \draw (x_2) -- (x_3); \draw (x_3) -- (x_4); \draw (x_4) -- (x_5); \draw (x_5) -- (x_6); \draw (x_6) -- (x_7);
\draw (p_0) -- (p_1); \draw (p_1) -- (p_2); \draw (p_2) -- (p_3); \draw (p_3) -- (p_4); \draw (p_4) -- (p_5); \draw (p_5) -- (p_6); \draw (p_6) -- (p_7);
\draw (z_0) -- (z_1); \draw (z_2) -- (z_3); \draw (z_4) -- (z_5); \draw (z_6) -- (z_7);
\draw (y_0) -- (y_1);\draw (y_2) -- (y_3); \draw (y_4) -- (y_5); \draw (y_6) -- (y_7);
\draw (v_0) -- (v_1); \draw (v_2) -- (v_3); \draw (v_4) -- (v_5); \draw (v_6) -- (v_7);
\draw (u_0) -- (u_1); \draw (u_1) -- (u_2); \draw (u_2) -- (u_3); \draw (u_4) -- (u_5); \draw (u_5) -- (u_6); \draw (u_6) -- (u_7);
\draw (w_0) -- (w_1); \draw (w_1) -- (w_2); \draw (w_2) -- (w_3); \draw (w_4) -- (w_5); \draw (w_5) -- (w_6); \draw (w_6) -- (w_7);
\draw (o_0) -- (o_1); \draw (o_2) -- (o_3); \draw (o_4) -- (o_5); \draw (o_6) -- (o_7);
\draw(y_3) -- (z_3);
\draw(y_4) -- (z_4);
%
\draw (x_0) -- (y_1);
\draw (x_2) -- (y_3);
\draw (x_4) -- (y_5);
\draw (x_6) -- (y_7);
\draw (z_0) -- (w_1);
\draw (z_2) -- (w_3);
\draw (z_4) -- (w_5);
\draw (z_6) -- (w_7);
\draw (u_0) -- (v_1);
\draw (u_2) -- (v_3);
\draw (u_4) -- (v_5);
\draw (u_6) -- (v_7);
\draw (o_0) -- (p_1);
\draw (o_2) -- (p_3);
\draw (o_4) -- (p_5);
\draw (o_6) -- (p_7);
\draw (y_1) -- (z_2);
\draw (v_1) -- (o_2);
\draw (w_3) -- (u_4);
\draw (y_5) -- (z_6);
\draw (v_5) -- (o_6); 
\foreach \i in {0, 1, 2, 3, 4, 5, 6, 7}{
\draw(x_\i)[fill=white] circle(\vr);
\draw(y_\i)[fill=white] circle(\vr);
\draw(z_\i)[fill=white] circle(\vr);
\draw(w_\i)[fill=white] circle(\vr);
\draw(u_\i)[fill=white] circle(\vr);
\draw(v_\i)[fill=white] circle(\vr);
\draw(o_\i)[fill=white] circle(\vr);
\draw(p_\i)[fill=white] circle(\vr);
}
%
\draw(u_4)node[left]{\footnotesize{311}}; \draw(u_5)node[below]{\footnotesize{312}}; \draw(u_6)node[below]{\footnotesize{321}}; \draw(u_7)node[right] {\footnotesize{322}};
\draw(v_4)node[left]{\footnotesize{310}}; \draw(v_5)node[right]{\footnotesize{313}}; \draw(v_6)node[above] {\footnotesize{320}}; \draw(v_7)node[right] {\footnotesize{323}};
\draw(o_4)node[left]{\footnotesize{301}}; \draw(o_5)node[below]{\footnotesize{302}}; \draw(o_6)node[left] {\footnotesize{331}}; \draw(o_7)node[right] {\footnotesize{332}};
\draw(p_4)node[above]{\footnotesize{300}}; \draw(p_5)node[above]{\footnotesize{303}}; \draw(p_6)node[above] {\footnotesize{330}}; \draw(p_7)node[above right] {\footnotesize{333}};
%
\draw(u_0)node[above left]{\footnotesize{011}}; \draw(u_1)node[above left]{\footnotesize{012}}; \draw(u_2)node[above left] {\footnotesize{021}}; \draw(u_3)node[above left] {\footnotesize{022}};
\draw(v_0)node[above left]{\footnotesize{010}}; \draw(v_1)node[above left]{\footnotesize{013}}; \draw(v_2)node[above left] {\footnotesize{020}}; \draw(v_3)node[above left] {\footnotesize{023}};
\draw(o_0)node[above left]{\footnotesize{001}}; \draw(o_1)node[above left]{\footnotesize{002}}; \draw(o_2)node[above left] {\footnotesize{031}}; \draw(o_3)node[above left] {\footnotesize{032}};
\draw(p_0)node[above left]{\footnotesize{000}}; \draw(p_1)node[above left]{\footnotesize{003}}; \draw(p_2)node[above left] {\footnotesize{030}}; \draw(p_3)node[above left] {\footnotesize{033}};
%
\draw(x_4)node[below right]{\footnotesize{211}}; \draw(x_5)node[below right]{\footnotesize{212}}; \draw(x_6)node[below right] {\footnotesize{221}}; \draw(x_7)node[below right] {\footnotesize{222}};
\draw(y_4)node[below right]{\footnotesize{210}}; \draw(y_5)node[below right]{\footnotesize{213}}; \draw(y_6)node[below right] {\footnotesize{220}}; \draw(y_7)node[below right] {\footnotesize{223}};
\draw(z_4)node[below right]{\footnotesize{201}}; \draw(z_5)node[below right]{\footnotesize{202}}; \draw(z_6)node[below right] {\footnotesize{231}}; \draw(z_7)node[below right] {\footnotesize{232}};
\draw(w_4)node[below right]{\footnotesize{200}}; \draw(w_5)node[below right]{\footnotesize{203}}; \draw(w_6)node[below right] {\footnotesize{230}}; \draw(w_7)node[below right] {\footnotesize{233}};
%
\draw(x_0)node[below left]{\footnotesize{111}}; \draw(x_1)node[below]{\footnotesize{112}}; \draw(x_2)node[below]{\footnotesize{121}}; \draw(x_3)node[below] {\footnotesize{122}};
\draw(y_0)node[left]{\footnotesize{110}}; \draw(y_1)node[right]{\footnotesize{113}}; \draw(y_2)node[above]{\footnotesize{120}}; \draw(y_3)node[right] {\footnotesize{123}};
\draw(z_0)node[left]{\footnotesize{101}}; \draw(z_1)node[below]{\footnotesize{102}}; \draw(z_2)node[left]{\footnotesize{131}}; \draw(z_3)node[right] {\footnotesize{132}};
\draw(w_0)node[left]{\footnotesize{100}}; \draw(w_1)node[above]{\footnotesize{103}}; \draw(w_2)node[above] {\footnotesize{130}}; \draw(w_3)node[right]{\footnotesize{133}};
\end{tikzpicture}}
\end{subfigure}
\caption{Graph $S^3_{K_4-e}$, its $8$-packing coloring, and the labeling of its vertices.} 
\label{fig:8pakirno}
\end{figure}

In the remainder of the proof, we consider the general case of the packing chromatic number of $S^n_{K_4-e}$, and prove the announced lower and upper bound. 
First, since each $S^n_{K_4-e}$, when $n \geq 4$, contains a subgraph isomorphic to $S^3_{K_4-e}$, we get $\chi_\rho(S^n_{K_4-e}) \geq 8$ for any $n \geq 4$. 
    
We next show that $\chi_\rho(S^n_{K_4-e}) \leq 11$ for any $n \geq 4$. 
We start by presenting a packing coloring of the vertices of $S^n_{K_4-e}$, $n \geq 5$, and denote it by $c$.  
%

Recall that $S^n_{K_4-e}$, $n \geq 5$, contains $4^{n-4}$ distinct copies of $S^4_{K_4-e}$. Color vertices of each of them as shown in Fig.~\ref{fig:11pakirnoA}. 
Let $\underline{u} \in [4]_0^{n-5}$ and $a \in \{0, 1, 2, 3\}$. 
Then let $c(\underline{u}a1131)=11$ and $c(\underline{u}a3111)=9$, if $a=1$, otherwise $c(\underline{u}a1131)=9$ and $c(\underline{u}a3111)=10$. Further, let $c(\underline{u}a1111)=8$, $c(\underline{u}a1313)=11$ and $c(\underline{u}a3311)=9$, if $a=3$, and otherwise $c(\underline{u}a1111)=c(\underline{u}a3311)=6$, $c(\underline{u}a1313)=8$. Let $c(\underline{u}13131)=10$, $c(\underline{u}33131)=8$ and $c(\underline{u}a3131)=11$, if $a \in \{0, 2\}$. Finally, color each of the unlabeled vertices by color $1$. We claim that in this way we get an $11$-packing coloring of $S^n_{K_4-e}$ for any $n \geq 5$. 

Consider first any two vertices, colored by the same color $i$, $ i \in [11]$, which both belong to the same subgraph of $S^n_{K_4-e}$, isomorphic to $S^4_{K_4-e}$.
It is clear that such two vertices are at distance at least $i+1$ (see Fig.~\ref{fig:11pakirnoA}). We show that the same holds also, if such two vertices belong to two distinct subgraphs of $S^n_{K_4-e}$, which are both isomorphic to $S^4_{K_4-e}$. 

%
\begin{figure}[htb!]
\begin{center}
\begin{tikzpicture}
\def\vr{3pt}
\def\len{1}
\foreach \i in {0, 1, 2, 3, 4, 5, 6, 7, 8, 9, 10, 11, 12, 13, 14, 15}{
\coordinate(x_\i) at (\i, 0);
\coordinate(y_\i) at (\i, 1);
\coordinate(z_\i) at (\i, 2);
\coordinate(w_\i) at (\i, 3);
\coordinate(u_\i) at (\i, 4);
\coordinate(v_\i) at (\i, 5);
\coordinate(o_\i) at (\i, 6);
\coordinate(p_\i) at (\i, 7);
\coordinate(x'_\i) at (\i, 8);
\coordinate(y'_\i) at (\i, 9);
\coordinate(z'_\i) at (\i, 10);
\coordinate(w'_\i) at (\i, 11);
\coordinate(u'_\i) at (\i, 12);
\coordinate(v'_\i) at (\i, 13);
\coordinate(o'_\i) at (\i, 14);
\coordinate(p'_\i) at (\i, 15);
}
%
\foreach \i in {0, 1, 2, 3, 4, 5, 6, 7, 8, 9, 10, 11, 12, 13, 14, 15}{
\draw (u_\i) -- (v_\i); \draw (o_\i) -- (p_\i); \draw (x_\i) -- (y_\i); \draw (z_\i) -- (w_\i); 
\draw (u'_\i) -- (v'_\i); \draw (o'_\i) -- (p'_\i); \draw (x'_\i) -- (y'_\i); \draw (z'_\i) -- (w'_\i);
}
\foreach \i in {0, 3, 4, 7, 8, 11, 12, 15}{
\draw (y_\i) -- (z_\i);
\draw (v_\i) -- (o_\i); 
\draw (y'_\i) -- (z'_\i);
\draw (v'_\i) -- (o'_\i); }
\foreach \i in {0, 7, 8, 15}{
\draw (w_\i) -- (u_\i);
\draw (w'_\i) -- (u'_\i); }
\draw (p_0) -- (x'_0);
\draw (p_15) -- (x'_15);
%
\foreach \i in {0, 2, 4, 6, 8, 10, 12, 14}{
\draw (x_\i) -- (x_\the\numexpr\i+1\relax); 
\draw (y_\i) -- (y_\the\numexpr\i+1\relax); 
\draw (z_\i) -- (z_\the\numexpr\i+1\relax); 
\draw (w_\i) -- (w_\the\numexpr\i+1\relax); 
\draw (u_\i) -- (u_\the\numexpr\i+1\relax); 
\draw (v_\i) -- (v_\the\numexpr\i+1\relax); 
\draw (o_\i) -- (o_\the\numexpr\i+1\relax); 
\draw (p_\i) -- (p_\the\numexpr\i+1\relax); 
\draw (x'_\i) -- (x'_\the\numexpr\i+1\relax); 
\draw (y'_\i) -- (y'_\the\numexpr\i+1\relax); 
\draw (z'_\i) -- (z'_\the\numexpr\i+1\relax); 
\draw (w'_\i) -- (w'_\the\numexpr\i+1\relax); 
\draw (u'_\i) -- (u'_\the\numexpr\i+1\relax); 
\draw (v'_\i) -- (v'_\the\numexpr\i+1\relax); 
\draw (o'_\i) -- (o'_\the\numexpr\i+1\relax); 
\draw (p'_\i) -- (p'_\the\numexpr\i+1\relax); 
}
\foreach \i in {1, 5, 9, 13}{
\draw (x_\i) -- (x_\the\numexpr\i+1\relax); 
\draw (w_\i) -- (w_\the\numexpr\i+1\relax); 
\draw (u_\i) -- (u_\the\numexpr\i+1\relax); 
\draw (p_\i) -- (p_\the\numexpr\i+1\relax); 
\draw (x'_\i) -- (x'_\the\numexpr\i+1\relax); 
\draw (w'_\i) -- (w'_\the\numexpr\i+1\relax); 
\draw (u'_\i) -- (u'_\the\numexpr\i+1\relax); 
\draw (p'_\i) -- (p'_\the\numexpr\i+1\relax); 
}
\foreach \i in {3, 11}{
\draw (x_\i) -- (x_\the\numexpr\i+1\relax); 
\draw (p_\i) -- (p_\the\numexpr\i+1\relax); 
\draw (x'_\i) -- (x'_\the\numexpr\i+1\relax); 
\draw (p'_\i) -- (p'_\the\numexpr\i+1\relax); 
}
\draw (x_7) -- (x_8);
\draw (p'_7) -- (p'_8);
%
\foreach \i in {0, 2, 4, 6, 8, 10, 12, 14}{
\draw (x_\i) -- (y_\the\numexpr\i+1\relax); 
\draw (z_\i) -- (w_\the\numexpr\i+1\relax); 
\draw (u_\i) -- (v_\the\numexpr\i+1\relax); 
\draw (o_\i) -- (p_\the\numexpr\i+1\relax); 
\draw (x'_\i) -- (y'_\the\numexpr\i+1\relax); 
\draw (z'_\i) -- (w'_\the\numexpr\i+1\relax); 
\draw (u'_\i) -- (v'_\the\numexpr\i+1\relax); 
\draw (o'_\i) -- (p'_\the\numexpr\i+1\relax); 
}
\foreach \i in {1, 5, 9, 13}{
\draw (y_\i) -- (z_\the\numexpr\i+1\relax); 
\draw (v_\i) -- (o_\the\numexpr\i+1\relax); 
\draw (y'_\i) -- (z'_\the\numexpr\i+1\relax); 
\draw (v'_\i) -- (o'_\the\numexpr\i+1\relax); 
}
\draw(w_3) -- (u_4);
\draw(w_11) -- (u_12);
\draw(w'_3) -- (u'_4);
\draw(w'_11) -- (u'_12);
\draw(p_7) -- (x'_8);
\foreach \i in {0, 1, 2, 3, 4, 5, 6, 7, 8, 9, 10, 11, 12, 13, 14, 15}{
\draw(x_\i)[fill=white] circle(\vr);
\draw(y_\i)[fill=white] circle(\vr);
\draw(z_\i)[fill=white] circle(\vr);
\draw(w_\i)[fill=white] circle(\vr);
\draw(u_\i)[fill=white] circle(\vr);
\draw(v_\i)[fill=white] circle(\vr);
\draw(o_\i)[fill=white] circle(\vr);
\draw(p_\i)[fill=white] circle(\vr);
\draw(x'_\i)[fill=white] circle(\vr);
\draw(y'_\i)[fill=white] circle(\vr);
\draw(z'_\i)[fill=white] circle(\vr);
\draw(w'_\i)[fill=white] circle(\vr);
\draw(u'_\i)[fill=white] circle(\vr);
\draw(v'_\i)[fill=white] circle(\vr);
\draw(o'_\i)[fill=white] circle(\vr);
\draw(p'_\i)[fill=white] circle(\vr);
}
%
%
\foreach \i in {6, 10}{
\draw(x_\i)node[above left]{\footnotesize{2}};
}
\foreach \i in {1, 3, 13, 15}{
\draw(y_\i)node[above left]{\footnotesize{2}};
}
\foreach \i in {4, 6, 8, 10}{
\draw(z_\i)node[above left]{\footnotesize{2}};
}
\foreach \i in {1, 13}{
\draw(w_\i)node[above left]{\footnotesize{2}};
}
\foreach \i in {2, 6, 10}{
\draw(u_\i)node[above left]{\footnotesize{2}};
}
\foreach \i in { 13, 15}{
\draw(v_\i)node[above left]{\footnotesize{2}};
}
\foreach \i in {0, 2, 4, 8, 10}{
\draw(o_\i)node[above left]{\footnotesize{2}};
}
\foreach \i in {7, 13}{
\draw(p_\i)node[above left]{\footnotesize{2}};
}
\foreach \i in {2, 10, 14}{
\draw(x'_\i)node[above left]{\footnotesize{2}};
}
\foreach \i in {5, 7}{
\draw(y'_\i)node[above left]{\footnotesize{2}};
}
\foreach \i in {0, 2, 8, 12, 14}{
\draw(z'_\i)node[above left]{\footnotesize{2}};
}
\foreach \i in {5, 11}{
\draw(w'_\i)node[above left]{\footnotesize{2}};
}
\foreach \i in {10, 14}{
\draw(u'_\i)node[above left]{\footnotesize{2}};
}
\foreach \i in {1, 3, 5, 7}{
\draw(v'_\i)node[above left]{\footnotesize{2}};
}
\foreach \i in {8, 10, 12, 14}{
\draw(o'_\i)node[above left]{\footnotesize{2}};
}
\foreach \i in {1, 5}{
\draw(p'_\i)node[above left]{\footnotesize{2}};
}
%
\foreach \i in {2, 14}{
\draw(x_\i)node[above left]{\footnotesize{3}};
}
\foreach \i in {7, 11}{
\draw(y_\i)node[above left]{\footnotesize{3}};
}
\foreach \i in {0, 12}{
\draw(z_\i)node[above left]{\footnotesize{3}};
}
\foreach \i in {5, 9}{
\draw(w_\i)node[above left]{\footnotesize{3}};
}
\foreach \i in {3, 7, 11}{
\draw(v_\i)node[above left]{\footnotesize{3}};
}
\foreach \i in {1, 5, 9}{
\draw(p_\i)node[above left]{\footnotesize{3}};
}
\foreach \i in {3, 11, 15}{
\draw(y'_\i)node[above left]{\footnotesize{3}};
}
\foreach \i in {1, 9, 13}{
\draw(w'_\i)node[above left]{\footnotesize{3}};
}
\foreach \i in {2, 6}{
\draw(u'_\i)node[above left]{\footnotesize{3}};
}
\foreach \i in {11, 15}{
\draw(v'_\i)node[above left]{\footnotesize{3}};
}
\foreach \i in {0, 4}{
\draw(o'_\i)node[above left]{\footnotesize{3}};
}
\foreach \i in {9, 13}{
\draw(p'_\i)node[above left]{\footnotesize{3}};
}
\draw(w_14)node[above left]{\footnotesize{3}};
\draw(u_14)node[above left]{\footnotesize{3}};
\draw(o_12)node[above left]{\footnotesize{3}};
\draw(x'_6)node[above left]{\footnotesize{3}};
\draw(z'_4)node[above left]{\footnotesize{3}};
%
\draw(x_4)node[above left]{\footnotesize{4}};
\draw(y_9)node[above left]{\footnotesize{4}};
\draw(u_0)node[above left]{\footnotesize{4}};
\draw(u_4)node[above left]{\footnotesize{4}};
\draw(v_9)node[above left]{\footnotesize{4}};
\draw(o_14)node[above left]{\footnotesize{4}};
\draw(y'_1)node[above left]{\footnotesize{4}};
\draw(y'_9)node[above left]{\footnotesize{4}};
\draw(y'_13)node[above left]{\footnotesize{4}};
\draw(z'_6)node[above left]{\footnotesize{4}};
\draw(v'_9)node[above left]{\footnotesize{4}};
\draw(o'_6)node[above left]{\footnotesize{4}};
\draw(p'_15)node[above left]{\footnotesize{4}};
%
\draw(y_5)node[above left]{\footnotesize{5}};
\draw(z_14)node[above left]{\footnotesize{5}};
\draw(w_11)node[above left]{\footnotesize{5}};
\draw(v_1)node[above left]{\footnotesize{5}};
\draw(o_6)node[above left]{\footnotesize{5}};
\draw(x'_12)node[above left]{\footnotesize{5}};
\draw(w'_3)node[above left]{\footnotesize{5}};
\draw(u'_8)node[above left]{\footnotesize{5}};
\draw(v'_13)node[above left]{\footnotesize{5}};
\draw(o'_2)node[above left]{\footnotesize{5}};
%
\draw(w_7)node[above left]{\footnotesize{6}};
\draw(p_3)node[above left]{\footnotesize{6}};
\draw(p_15)node[above left]{\footnotesize{6}};
\draw(u_8)node[above left]{\footnotesize{6}};
\draw(x'_4)node[above left]{\footnotesize{6}};
\draw(p'_7)node[above left]{\footnotesize{6}};
%
\draw(x_8)node[above left]{\footnotesize{7}};
\draw(w_3)node[above left]{\footnotesize{7}};
\draw(x'_0)node[above left]{\footnotesize{7}};
\draw(w'_15)node[above left]{\footnotesize{7}};
\draw(p'_11)node[above left]{\footnotesize{7}};
%
\draw(x_12)node[above left]{\footnotesize{8}};
\draw(p_11)node[above left]{\footnotesize{8}};
\draw(u'_0)node[above left]{\footnotesize{8}};
\draw(w'_7)node[above left]{\footnotesize{8}};
%
\draw(u_12)node[above left]{\footnotesize{9}};
\draw(u'_4)node[above left]{\footnotesize{9}};
%
\draw(w_15)node[above left]{\footnotesize{10}};
\draw(p'_3)node[above left]{\footnotesize{10}};
%
\draw(x_0)[fill=orange] circle(\vr);
\draw(z_2)[fill=orange] circle(\vr);
\draw(v_5)[fill=orange] circle(\vr);
\draw(x'_8)[fill=orange] circle(\vr);
\draw(z'_10)[fill=orange] circle(\vr);
\draw(u'_12)[fill=orange] circle(\vr);
%
\draw(x_0)node[below]{\footnotesize{1111}};
\draw(z_2)node[above left]{\footnotesize{1131}};
\draw(v_5)node[above left]{\footnotesize{1313}};
\draw(x'_8)node[above left]{\footnotesize{3111}};
\draw(z'_10)node[above left]{\footnotesize{3131}};
\draw(u'_12)node[above left]{\footnotesize{3311}};
\end{tikzpicture}
\caption{A graph $S^4_{K_4-e}$, and its partial packing coloring} 
\label{fig:11pakirnoA}
\end{center}
\end{figure}
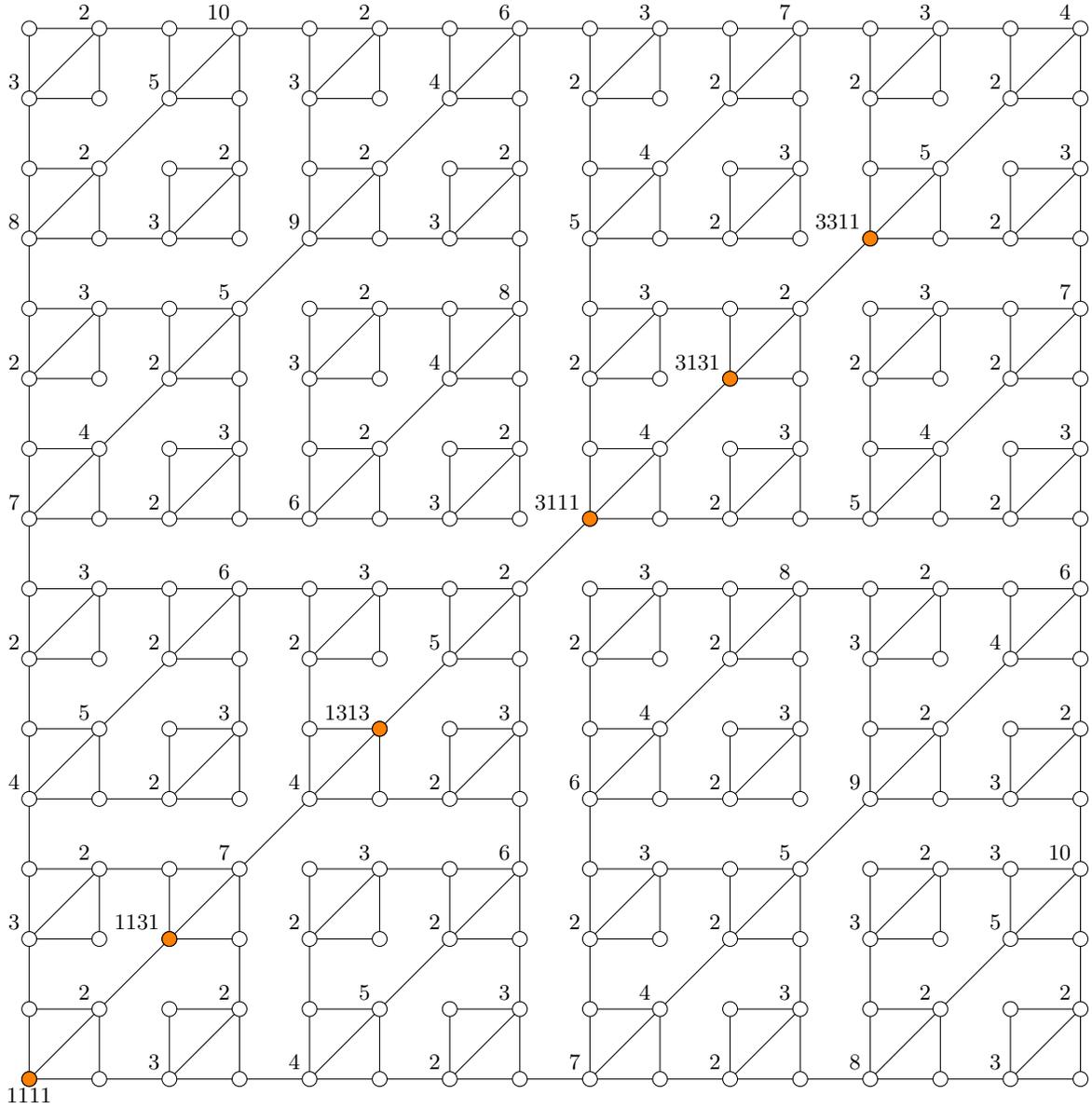
%
%
%
Consider first the vertices of $S^n_{K_4-e}$, $n \geq 4$, colored by $1$. If such vertex is non-extreme vertex of the subgraph isomorphic $S^4_{K_4-e}$ to which this vertex belongs, then it is at distance at least $2$ from any vertex colored by $1$.
If considered vertex is an extreme vertex of some subgraph isomorphic to $S^4_{K_4-e}$, then it is $\underline{u}0000$ or $\underline{v}2222$, $\underline{u}, \underline{v} \in [4]_0^{n-4}$. But by the definition of edges of generalized Sierpi\'nski graphs, vertices $\underline{u}0000$ and $\underline{v}2222$, for any $\underline{u}, \underline{v} \in [4]^{n-4}_0$, are not adjacent in any $S^n_{K_4-e}$, $n \geq 4$, since $\{0, 2\} \notin E(K_4-e)$. Thus any two  vertices of $S^n_{K_4-e}$, $n \geq 4$, colored by $1$, are at distance at least $2$. 

%
Next, in the packing coloring $c$ of $S^4_{K_4-e}$ none of the extreme vertices is colored by $2$. Consequently, any two vertices, colored by $2$, are at distance at least $3$ in $S^n_{K_4-e}$, $n \geq 4$. 
%
Each copy of $S^4_{K_4-e}$ in $S^n_{K_4-e}$ contains two vertices ($\underline{u}0001$ and $\underline{u}2221$, $\underline{u} \in [4]_0^{n-4}$), which are colored by $3$, belong to some extreme square, and are at distance $1$ from the nearest extreme vertex (i.e. $\underline{u}0000$ respectively $\underline{u}2222$). 
But since vertices $\underline{u}0000$ and $\underline{v}2222$, $\underline{u}, \underline{v} \in [4]_0^{n-4}$, are not adjacent in any graph $S^n_{K_4-e}$, $n \geq 4$, the distance between vertices  $\underline{u}0001$ and $\underline{v}2221$, $ \underline{u}, \underline{v} \in [4]_0^{n-4}$, is at least $17$ in $S^n_{K_4-e}$. 
Any other vertex of any subgraph isomorphic to $S^4_{K_4-e}$, colored by $3$, is at distance at least $2$ from any extreme vertex and hence its distance to any vertex of $S^n_{K_4-e}$ colored by the same color is at least $4$. 

%
If a vertex of $S^n_{K_4-e}$, $n \geq 4$, colored by $4$, is at distance at least $4$ from any extreme vertex of any copy of $S^4_{K_4-e}$, then is at distance at least $5$ from any other vertex, colored also by $4$. 
Since there exists only one vertex of each copy of $S^4_{K_4-e}$ colored by $4$, which is at distance less than $4$ from some extreme vertex of this copy of $S^4_{K_4-e}$, such vertex is not problematic (it is at distance at least $5$ from any vertex with the same color). 
%
In $S^4_{K_4-e}$ whose coloring is shown in Fig.~\ref{fig:11pakirnoA}, there is only one vertex ($3313$) colored by $5$, which is at distance (at most) $2$ from some extreme vertex of $S^4_{K_4-e}$ (other vertices colored by $5$ are at distance at least $3$ from any extreme vertex of $S^4_{K_4-e}$). Hence in each case the sum of two distances between a vertex colored by $5$ and extreme vertex of $S^4_{K_4-e}$
is at least $5$. Therefore any two vertices of $S^n_{K_4-e}$, $n \geq 4$, colored by $5$ are at distance at least $6$. 

%
Consider now the vertices of $S^n_{K_4-e}$, $n \geq 4$, which are colored by $6$ using the packing coloring shown in Fig.~\ref{fig:11pakirnoA}. 
It is clear that each such vertex of any subgraph of $S^n_{K_4-e}$, isomorphic to $S^4_{K_4-e}$, is at distance at least $7$ from any extreme vertex of this copy of $S^4_{K_4-e}$. Therefore any such two vertices of $S^n_{K_4-e}$, $n \geq 5$, which belong to two distinct subgraphs isomorphic to $S^4_{K_4-e}$ and are colored by $6$, are at distance at least $7$.

We still need to check the distances between the mentioned vertices colored by $6$ and those, whose coloring is not shown in Fig.~\ref{fig:11pakirnoA} but nevertheless get color $6$ by the description of the coloring $c$ above, and, in addition, we need to check the distances between these vertices themselves. It is easy to see that the distance between any two vertices, colored by $6$, within any copy of $S^5_{K_4-e}$ is at least $7$. 
Now, in any subgraph $\underline{u}S^5_{K_4-e}$, $\underline{u} \in [4]_0^{n-5}$, each vertex colored by $6$ except $\underline{u}11111$ is at distance at least $7$ from any extreme vertex of $\underline{u}S^5_{K_4-e}$. Hence any two vertices of $S^n_{K_4-e}$, $n \geq 5$, colored by $6$ are at distance at least $7$.

In $S^4_{K_4-e}$, whose coloring is shown in Fig.-\ref{fig:11pakirnoA} only one vertex colored by $7$ is at distance $3$ from the nearest extreme vertex of $S^4_{K_4-e}$. 
Other such vertices are at distances more than $3$ ($4$ or $7$) from any extreme vertex of $S^4_{K_4-e}$. Hence the distance between any two vertices of $S^n_{K_4-e}$, $n \geq 4$, colored by $7$ is at least $8$. 

There are only two vertices of $S^4_{K_4-e}$ colored by $8$ by the use of packing coloring shown in Fig.~\ref{fig:11pakirnoA}, which are at distance (at most) $3$ from the nearest extreme vertex. These two vertices are $0011$ and $2211$, the nearest extreme vertex of the first is $0000$ and of the second is $2222$. 
But since vertices $\underline{u}0000$ and $\underline{v}2222$, $\underline{u}, \underline{v} \in [4]_0^{n-4}$, are not adjacent in any graph $S^n_{K_4-e}$, $n \geq 4$, vertices $\underline{u}0011$ and $\underline{v}2211$ are at distance more than $8$ in any graph $S^n_{K_4-e}$, $n \geq 4$.
It is clear that the distance between any two vertices of $S^n_{K_4-e}$, $n \geq 5$, which are colored by $8$, but their coloring is not shown in Fig.~\ref{fig:11pakirnoA}, is more than $8$. Any of these vertices is also at distance more than $8$ from any vertex colored by $8$. 
Vertices of any subgraph $\underline{u}S^5_{K_4-e}$ of $S^n_{K_4-e}$, $n \geq 5$, $\underline{u} \in [4]_0^{n-5}$, which are colored by $8$, are at distances $3$ ($\underline{u}00011$ and $\underline{u}22211$), $5$ ($\underline{u}11313$ and $\underline{u}33131$), or more than $8$ from any extreme vertex of $\underline{u}S^5_{K_4-e}$. 
Thus the only problematic vertices could be $\underline{u}00011$ and $\underline{v}22211$, $\underline{u}, \underline{v} \in [4]_0^{n-5}$, since the distance between any other two vertices, colored by $8$, is at least $9$ in $S^n_{K_4-e}$. But by above consideration, vertices $\underline{u}00011$ and $\underline{v}22211$ are at distance more than $8$ in any $S^n_{K_4-e}$, $n \geq 5$. 

In a copy of $S^5_{K_4-e}$ all vertices colored by $9$, except vertex, corresponding to $33311$, are at distance at least $7$ from any of the extreme vertices. Thus any such two vertices in $S^n_{K_4-e}$, $n \geq 5$, are at distance at least $12$ (because the distance between them within a copy of $S^5_{K_4-e}$ is at least $12$). 
Vertex $33311$ of $S^5_{K_4-e}$ is at distance $3$ from the nearest extreme vertex, but since the sum of $3$ and any number, which is at least $7$, is more than $9$, vertex $\underline{u}33311$, $\underline{u} \in [4]_0^{n-5}$, is at distance at least $10$ (actually $11$) from any vertex of $S^n_{K_4-e}$, $n \geq 5$, colored by $9$. 

In $S^5_{K_4-e}$ two vertices ($00033$ and $22233$) colored by $10$ are at distance $3$ from the nearest extreme vertex.
Since vertices $\underline{u}00000$ and $\underline{v}22222$, $\underline{u}, \underline{v} \in [4]_0^{n-5}$, are not adjacent for any $n \geq 5$ in a graph $S^n_{K_4-e}$, $n \geq 5$, vertices $\underline{u}00033$ and $\underline{v}22233$ are at distance at least $24$. 
The other vertices of $S^5_{K_4-e}$ colored by $10$ are at distance at least $7$ from any extreme vertex. Hence in $S^n_{K_4-e}$, $n \geq 5$, each of the vertices, corresponding to the mentioned vertices of $S^5_{K_4-e}$, is at distance more than $10$ from any other vertex colored by $10$.

The graph $S^5_{K_4-e}$ contains four vertices colored by $11$, which are from the nearest extreme vertices at distances $2$, $10$ or $15$. Since the sum of any two listed numbers is more than $11$, vertices colored by $11$ are in $S^n_{K_4-e}$, $n \geq 5$, at distance more than $11$.

In conclusion, coloring $c$ is indeed an $11$-packing coloring of $S^n_{K_4-e}$ for any $n \geq 5$. Thus, $\chi_\rho(S^n_{K_4-e}) \leq 11$ for any $n \geq 5$. 
Since $S^5_{K_4-e}$ contains a subgraph isomorphic to $S^4_{K_4-e}$, we also have $\chi_\rho(S^4_{K_4-e}) \leq 11$.  
\qed
\end{proof}

Recall, that the graph obtained by attaching a leaf to one of the vertices of a triangle is called the {\em paw}.

%
%
%
\begin{corollary}
If $n\geq 1$ and $S^n_{paw}$ is the generalized Sierpi\'nski graph of the paw of dimension $n$, then
$$\chi_{\rho}(S^1_{paw})=3,$$ $$5 \leq \chi_\rho(S^2_{paw}) \leq 6,$$ $$7 \leq \chi_\rho(S^3_{paw}) \leq 8.$$ For each $n \geq 4$, $$7 \leq \chi_\rho(S^n_{paw}) \leq 11.$$ 
\end{corollary}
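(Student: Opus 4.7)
The plan is to derive both the upper and the lower bounds from two elementary structural observations about the paw. For the upper bounds, the paw is a spanning subgraph of $K_4-e$ (remove one additional edge from $K_4-e$ to obtain it). Hence $S^n_{paw}$ is a spanning subgraph of $S^n_{K_4-e}$ for every $n$, so any packing coloring of $S^n_{K_4-e}$ restricts to a packing coloring of $S^n_{paw}$. Applying Theorem~\ref{izrek_K4-e} immediately yields $\chi_\rho(S^1_{paw})\le 3$, $\chi_\rho(S^2_{paw})\le 6$, $\chi_\rho(S^3_{paw})\le 8$, and $\chi_\rho(S^n_{paw})\le 11$ for $n\ge 4$.

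For the lower bounds, the key observation is that $K_3$ is an \emph{induced} subgraph of the paw, namely the triangle on the three vertices of degree at least $2$. Mimicking the inductive argument in the proof of Corollary~\ref{cor:K4}, and noting that no extra paw-edges appear inside this vertex triple, the subgraph of $S^n_{paw}$ induced on the set of vertices whose labels lie in $\{1,2,3\}^n$ is isomorphic to $S^n_{K_3}=S^n_3$. Therefore $\chi_\rho(S^n_{paw})\ge \chi_\rho(S^n_3)$. The case $n=1$ reduces to $\chi_\rho(K_3)=3$, matching the upper bound. For $n=2$, the bound $\chi_\rho(S^2_3)\ge 5$ from~\cite{bkr-2016}---already invoked in the proof of Theorem~\ref{izrek_K4-e}---gives $\chi_\rho(S^2_{paw})\ge 5$. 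For $n\ge 3$, using $\chi_\rho(S^3_3)=7$ (also from~\cite{bkr-2016}) together with the fact that $S^3_3$ is a subgraph of $S^n_3$, we obtain $\chi_\rho(S^n_{paw})\ge \chi_\rho(S^n_3)\ge \chi_\rho(S^3_3)=7$.

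The induced-subgraph embedding $S^n_3\hookrightarrow S^n_{paw}$ is routine: it follows from the defining conditions (i)--(iii) of generalized Sierpi\'nski graphs together with the fact that the paw has no edges inside $\{1,2,3\}$ beyond those of $K_3$. The only real sensitivity in the plan is the reliance on the lower bound $\chi_\rho(S^3_3)\ge 7$; should a direct citation prove inconvenient, this bound can be re-established by adapting the subgraph-based arguments of Lemmas~\ref{lemmaH} and~\ref{7pakirno} to a suitable subgraph of $S^3_{paw}$.
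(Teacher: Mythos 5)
Your proposal is correct and follows essentially the same route as the paper: the upper bounds come from the fact that the paw is a (spanning) subgraph of $K_4-e$, hence $S^n_{paw}$ is a subgraph of $S^n_{K_4-e}$ and Theorem~\ref{izrek_K4-e} applies, while the lower bounds come from the embedding of $S^n_3$ into $S^n_{paw}$ via the triangle of the paw together with the bounds of~\cite{bkr-2016}. The only difference is that you spell out the $S^n_3\hookrightarrow S^n_{paw}$ embedding explicitly in the style of Corollary~\ref{cor:K4}, which the paper merely asserts.
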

\begin{proof}
It is clear, that $\chi_{\rho}(S^1_{paw})=3$, since $S^1_{paw}$ is isomorphic to the graph paw. Since for each $n \geq 2$ the graph $S^n_{paw}$ contains a subgraph isomorphic to $S^n_3$, the lower bounds for $\chi_\rho(S^n_{paw})$, $n \geq 2$, follow from \cite{bkr-2016}. The upper bounds for $\chi_\rho(S^n_{paw})$ are determined by Theorem \ref{izrek_K4-e}, since each $S^n_{paw}$, $n \geq 2$, is isomorphic to a subgraph of $S^n_{K_4-e}$.  \qed 
\end{proof}
%

\section{Sierpi\' nski triangle graphs}
\label{sec:trikotnik}

Sierpi\' nski triangle graphs can be defined in various ways, which originate in the Sierpi\' nski triangle fractal~\cite{hkz-17}. 
Here we use the connection with Sierpi\' nski graphs. Let $n\in \NN_0$. The {\em Sierpi\' nski triangle graph $ST^n_3$ of dimension $n$} is the graph obtained from $S^{n+1}_3$ by contracting all non-clique edges. 

																						Sierpi\' nski triangle graphs can also be constructed recursively in the following way. Firstly $ST^0_3$ is the triangle $K_3$. For any $n\in \NN_0$, assuming that $ST^n_3$ is given, we obtain $ST^{n+1}_3$ from three copies of $ST^n_3$, by identifying three pairs of extreme vertices, two from each copy, in the manner shown in Fig.~\ref{fig:rekurzija}. Note that extreme vertices of Sierpi\' nski triangle graph are vertices of degree 2 and there are exactly three such vertices in each $ST^n_3$. The three copies of the subgraph isomorphic to $ST^n_3$ that lie in $ST^{n+1}_3$ are denoted by $0ST^n_3$, $1ST^n_3$ and $2ST^n_3$, see Fig.~\ref{fig:rekurzija}.

\begin{figure}[h]
\begin{center}
\resizebox{0.3\linewidth}{!}{
\begin{tikzpicture}
\def\vr{2pt}
\def\len{1}
\coordinate (x) at (-2, 0);
\coordinate (y) at (0, 0);
\coordinate (z) at (2, 0);
\coordinate (u) at (-1, 2);
\coordinate (v) at (1, 2);
\coordinate (w) at (0, 4);
\coordinate (a) at (-1, 0.8);
\coordinate (b) at (1, 0.8);
\coordinate (c) at (0, 2.8);
\draw (x) -- (z) -- (w) -- (x);
\draw (y) -- (v) -- (u) -- (y);
\draw(x)[fill=white]circle(\vr);
\draw(y)[fill=white]circle(\vr);
\draw(z)[fill=white]circle(\vr);
\draw(u)[fill=white]circle(\vr);
\draw(v)[fill=white]circle(\vr);
\draw(w)[fill=white]circle(\vr);
\draw(a)node[below]{$1ST^n_3$};
\draw(b)node[below]{$2ST^n_3$};
\draw(c)node[below]{$0ST^n_3$};
\end{tikzpicture}}
\end{center}
\caption{$ST^{n+1}_3$}
\label{fig:rekurzija}
\end{figure}

\begin{theorem}
\label{th:trikotnik}
If $ST^n_3$, $n \geq 0$, is the Sierpi\'nski triangle graph of dimension $n$, then 
$$\chi_{\rho}(ST^n_3) =
\left\{\begin{array}{ll}
3; & n=0\,,\\
4; & n=1\,,\\
8; & n=2\,.
\end{array}\right.$$
Moreover, for each $n \geq 0:\chi_\rho(ST^n_3) \leq 31$.
\end{theorem}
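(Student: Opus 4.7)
For $n = 0$, $ST^0_3 = K_3$ gives $\chi_{\rho}(ST^0_3) = 3$ immediately. For $n = 1$, $ST^1_3$ has six vertices (three extreme vertices of degree $2$, and three inner ``joint'' vertices of degree $4$) and diameter $2$. In any $3$-packing coloring, colors $2$ and $3$ can be used at most once each (both require distance strictly greater than $2$, which cannot occur since the diameter is $2$), and the maximum independent set of $ST^1_3$ has size $3$ (for instance, the three extreme vertices, and it is routine to check no larger independent set exists). Hence at most $3 + 1 + 1 = 5 < 6$ vertices can be covered, forcing $\chi_{\rho}(ST^1_3) \ge 4$; the matching $4$-packing coloring assigns color $1$ to the three extreme vertices and colors $2, 3, 4$ to the three inner vertices.

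For $n = 2$, the upper bound $\chi_{\rho}(ST^2_3) \le 8$ would be obtained by exhibiting an explicit $8$-packing coloring (by a figure). For the lower bound, $ST^2_3$ has $15$ vertices and diameter $4$, so colors $5, 6, 7$ each cover at most one vertex in any packing coloring. The plan is to bound sharply the sizes of maximum $i$-packings of $ST^2_3$ for $i \in \{1, 2, 3, 4\}$ and show their sum together with $3$ is strictly less than $15$; alternatively, a case analysis on the distribution of color $1$ among the three copies of $ST^1_3$ constituting $ST^2_3$, in the spirit of Lemma~\ref{7pakirno}, would rule out any $7$-packing coloring.

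For the general upper bound $\chi_{\rho}(ST^n_3) \le 31$, the plan is to find a fixed integer $k$ and a $31$-packing coloring $c_0$ of $ST^k_3$ with two properties: (P1) $c_0$ assigns color $1$ to each of the three extreme vertices of $ST^k_3$; and (P2) for every color $i \ge 2$, every pair of vertices $u, v \in c_0^{-1}(i)$ (possibly with $u = v$), and every pair of extreme vertices $e_1, e_2$ of $ST^k_3$, the inequality $d(u, e_1) + d(v, e_2) > i$ holds. Given such a $c_0$, for every $n \ge k$ I would color each of the $3^{n-k}$ copies of $ST^k_3$ inside $ST^n_3$ identically via $c_0$. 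Property (P1) makes the tiled coloring well-defined at the extreme vertices of $ST^k_3$ shared between adjacent copies, while (P2) handles any same-colored pair lying in different copies: the shortest path between such vertices traverses a shared extreme vertex (or a chain of them, which only increases the distance), so its length is bounded below by the sum in (P2).

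A natural choice is $k = 5$, since $\diam(ST^5_3) = 2^5 = 32 > 31$ leaves room to place the higher colors deep in the interior of $ST^k_3$, where they are simultaneously far from all three extreme vertices. The hard part will be producing an explicit $c_0$ and verifying (P2) for all $31$ colors: the low colors must tile the ``boundary'' of $ST^k_3$ (including forcing color $1$ at extreme vertices as in (P1)) while the large colors, appearing only once or a handful of times, must be pinned to positions whose distances to every extreme vertex are large enough to satisfy (P2). This is a finite combinatorial check, but combining it with a consistent $31$-packing structure within a single copy of $ST^k_3$ is the genuine obstacle.
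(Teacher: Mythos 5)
Your overall architecture matches the paper's: for $n\le 1$ the counting arguments are the same, and for the general upper bound the paper does exactly what you propose --- it builds a base coloring (of $ST^5_3$, assembled from three nearly identical copies of a colored $ST^4_3$), tiles it $3^{n-5}$ times, puts color $1$ on the extreme vertices so the identifications are consistent, and verifies for each color class precisely your condition (P2), phrased there as ``the sum of the distances from each vertex to its nearest extreme vertex is at least $i+1$.'' But the proposal has genuine gaps where the actual mathematical content should be. First, for $n=2$ you defer the upper bound to an unexhibited figure and only sketch two possible routes to the lower bound; the first route (summing the maximum sizes of $i$-packings and comparing with $15$) in fact fails, because those maxima are $6,3,3,1,1,1,1$, which sum to $16\ge 15$. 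The paper's argument is necessarily finer: it shows that $|c^{-1}(3)|=3$ forces all three extreme vertices to get color $3$, which caps $|c^{-1}(1)|$ at $4$; and that otherwise every bound must be attained simultaneously, which forces color $1$ on all extreme vertices and then makes $|c^{-1}(2)|=3$ and $|c^{-1}(3)|=2$ incompatible. Your second route (a case analysis) could work, but you have not carried it out.

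Second, and more seriously, for the bound $\chi_\rho(ST^n_3)\le 31$ you explicitly stop at the point where the proof actually begins: producing the coloring $c_0$ and verifying (P1)--(P2) for all $31$ colors is not a routine afterthought but the entire substance of the paper's argument (a concrete partial coloring of $ST^4_3$ giving $\pch(ST^4_3)\le 23$, a careful placement of colors $18$--$21$ in only two of the three copies inside $ST^5_3$, ten further singleton colors $22$--$31$ placed deep in the interior, and a color-by-color distance verification). You correctly identify the reduction and the right choice $k=5$ (using $\diam(ST^5_3)=32$), and your condition (P2) is equivalent to the paper's verification criteria, but without the explicit construction the theorem is not proved. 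As it stands this is a correct proof plan, not a proof.
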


\begin{proof}
It is clear that $\chi_\rho(ST^0_3)=3$, since $ST^0_3$ is isomorphic to $K_3$.

For any packing coloring of $ST^1_3$, color $1$ can used at most three times and each other color at most once, since the diameter of this graph is $2$. Using also the fact that $|V(ST^1_3)|=6$, colors $1, 2, 3,$ and $4$ are necessary. Thus, $\chi_\rho(ST^1_3) \geq 4$. In Fig.~\ref{fig:trikotnik2}  a $4$-packing coloring of $ST^1_3$ is shown, and hence $\chi_\rho(ST^1_3)=4$.
\begin{figure}[htb!]
\begin{center}
\begin{tikzpicture}
\def\vr{2pt}
\def\len{1}
\foreach \j in {0, 1, 2}{
\foreach \i in {0,...,\j}{
\coordinate (x^\j_\i) at (-0.5*\j+1+\i, 2-\j);
}}
\draw (x^2_0) -- (x^2_2) -- (x^0_0) -- (x^2_0);
\draw (x^1_0) -- (x^2_1) -- (x^1_1) -- (x^1_0);
\foreach \j in {0, 1, 2}{
\foreach \i in {0,...,\j}{
\draw (x^\j_\i)[fill=white]circle(\vr);
} }
\draw(x^0_0)node[left]{\footnotesize{1}};
\draw(x^1_0)node[left]{\footnotesize{2}};
\draw(x^1_1)node[right]{\footnotesize{4}};
\draw(x^2_0)node[below]{\footnotesize{1}};
\draw(x^2_1)node[below]{\footnotesize{3}};
\draw(x^2_2)node[below]{\footnotesize{1}};
\end{tikzpicture}
\caption{The $4$-packing coloring of $ST^1_3$} 
\label{fig:trikotnik2}
\end{center}
\end{figure}
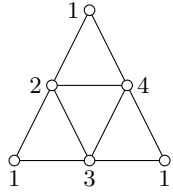

In Fig.~\ref{fig:trikotnik3} an $8$-packing coloring of $ST^2_3$ is presented and thus $\chi_\rho(ST^2_3) \leq 8$. 
Suppose that $\chi_\rho(ST^2_3) = 7$ and denote by $c$ a $7$-packing coloring of $ST^2_3$. 
The following properties of $c$ are easy to observe: $|c^{-1}(1)| \leq 6$, $|c^{-1}(2)| \leq 3$, $|c^{-1}(3)| \leq 3$, $|c^{-1}(4)| \leq 1$, $|c^{-1}(5)| \leq 1$, $|c^{-1}(6)| \leq 1$ and $|c^{-1}(7)| \leq 1$. 

\begin{figure}[htb!]
\begin{center}
\begin{tikzpicture}
\def\vr{2pt}
\def\len{1}
\foreach \j in {0, 1, 2, 3, 4}{
\foreach \i in {0,...,\j}{
\coordinate (x^\j_\i) at (2-0.5*\j+\i, 4-\j);
} }
\draw (x^0_0) -- (x^4_0) -- (x^4_4) -- (x^0_0);
\draw (x^2_0) -- (x^4_2) -- (x^2_2) -- (x^2_0);
\draw (x^1_0) -- (x^2_1) -- (x^1_1) -- (x^1_0);
\draw (x^3_0) -- (x^4_1) -- (x^3_1) -- (x^3_0);
\draw (x^3_2) -- (x^4_3) -- (x^3_3) -- (x^3_2);
\foreach \j in {0, 1, 2, 3, 4}{
\foreach \i in {0,...,\j}{
\draw (x^\j_\i)[fill=white]circle(\vr); }}
\draw(x^0_0)node[left]{\footnotesize{1}};
\draw(x^1_0)node[left]{\footnotesize{7}};
\draw(x^1_1)node[right]{\footnotesize{3}};
\draw(x^2_0)node[left]{\footnotesize{1}};
\draw(x^2_1)node[below]{\footnotesize{8}};
\draw(x^2_2)node[right]{\footnotesize{1}};
\draw(x^3_0)node[left]{\footnotesize{2}};
\draw(x^3_1)node[right]{\footnotesize{4}};
\draw(x^3_2)node[left]{\footnotesize{5}};
\draw(x^3_3)node[right]{\footnotesize{6}};
\draw(x^4_0)node[below]{\footnotesize{1}};
\draw(x^4_1)node[below]{\footnotesize{3}};
\draw(x^4_2)node[below]{\footnotesize{1}};
\draw(x^4_3)node[below]{\footnotesize{2}};
\draw(x^4_4)node[below]{\footnotesize{1}};
\end{tikzpicture}
\caption{The $8$-packing coloring of $ST^2_3$} 
\label{fig:trikotnik3}
\end{center}
\end{figure}
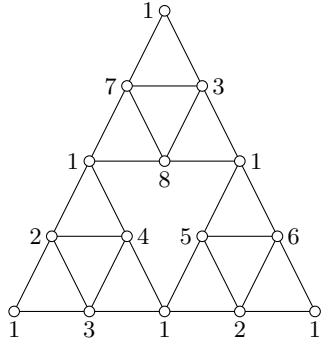

Next, suppose that $|c^{-1}(3)| = 3$. This implies, that all $3$ extreme vertices of $ST^2_3$ (i.e., the vertices of degree 2) get color $3$, because these are the only three vertices of $ST^2_3$, which are pairwise at distance (at least) $4$. 
But then at most four vertices can be colored by color $1$ and this is a contradiction, since $\sum_{i=1}^7{|c^{-1}(i)|} \leq 14$ and $|V(ST^2_3)|=15$.  Hence $|c^{-1}(3)| \leq 2$ and consequently $\sum_{i=1}^7{|c^{-1}(i)|} \leq 15$. This yields, that each number $|c^{-1}(i)|$, $ 1 \leq i \leq 7$, must reach the established upper bound. Hence $|c^{-1}(1)| = 6$ and it is necessary that each of the extreme vertices gets color $1$. But then one cannot have $|c^{-1}(2)|=3$ and at the same time $|c^{-1}(3)|=2$, which implies $\sum_{i=1}^7{|c^{-1}(i)|} \leq 14$, a contradiction. Thus $\chi_\rho(ST^2_3) =8$.

Now, we consider the general case of $ST^n_3$ where $n\ge 4$. Fig.~\ref{fig:trikotnik_barvanje} shows a partial packing coloring of $ST^4_3$. Vertices that are not colored orange and are not labelled receive color 1, while vertices colored orange and have no label receive color $22$ and $23$, which yields $\pch(ST^4_3)\le 23$.

Next we present the packing coloring of the vertices of $ST^5_3$, and in parallel we also consider the general case of $ST^n_3$ for $n\ge 5$.


%
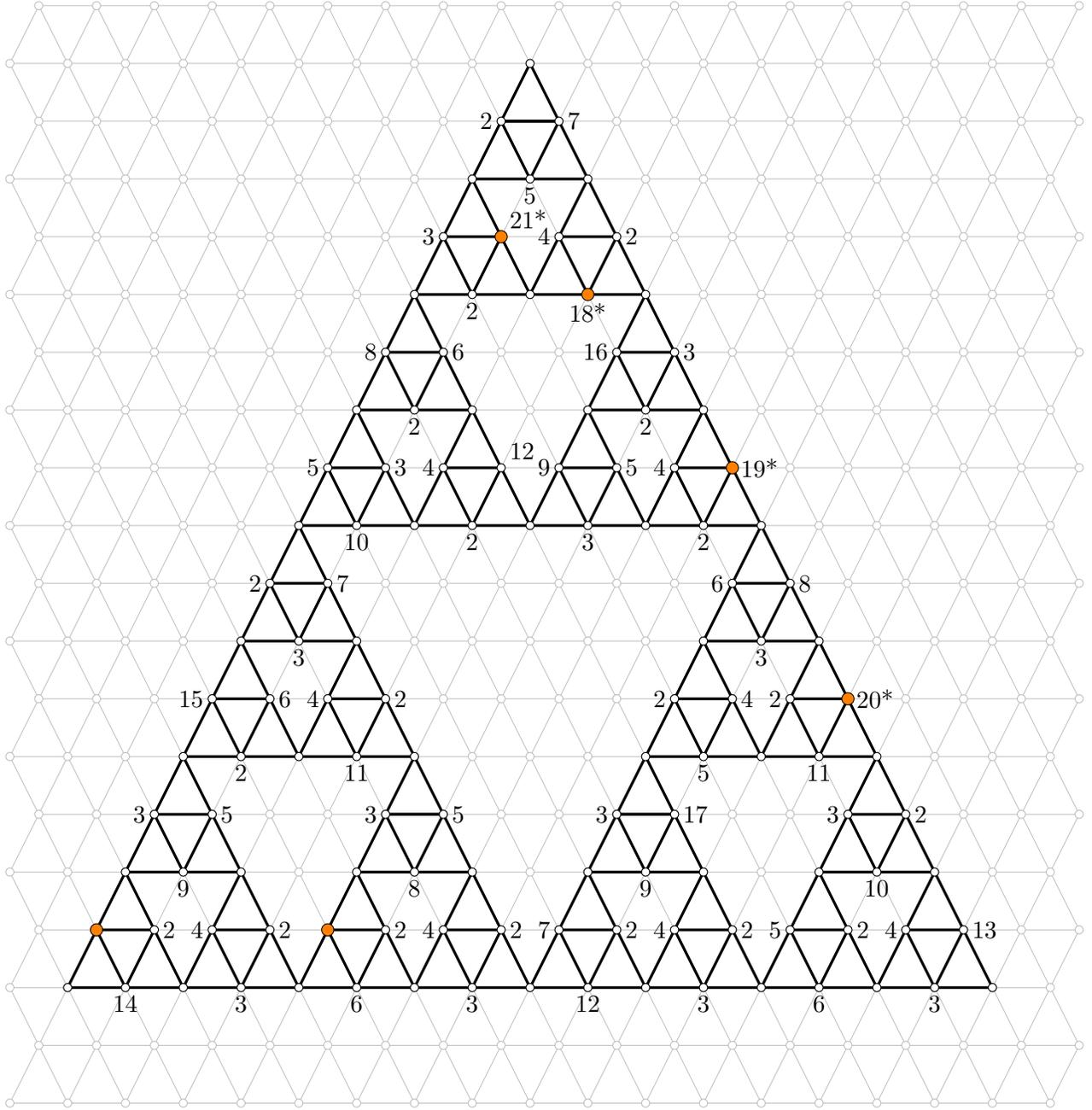
\begin{figure}[htb!]
\begin{center}
\begin{tikzpicture}[scale=0.90] 
\def\vr{2pt}
\def\len{1}
%
%
\foreach \l in {-1, 0, 1, 2, 3, 4, 5, 6, 7, 8}{ 
\foreach \k in {-1, 0, 1, 2, 3, 4, 5, 6, 7, 8, 9, 10, 11, 12, 13, 14, 15, 16, 17}{
\coordinate (x^\l_\k) at (\k, 2*\l); 
\coordinate (y^\l_\k) at (\k+0.5, 2*\l+1);
 }}
%
\foreach \k in {-1, 0, 1, ..., 8}{
\draw[very thin, lgray](x^-1_\k) -- (y^8_\the\numexpr\k+9\relax);
\ifthenelse{\k < 8}{\draw[very thin, lgray](x^-1_\the\numexpr\k+10\relax) -- (y^8_\k);}{}
}
\foreach \k in {0, 1, ..., 8}{
\draw[very thin, lgray](x^\k_-1) -- (y^8_\the\numexpr7-\k\relax);
\draw[very thin, lgray](x^-1_\k) -- (x^\k_-1);
\draw[very thin, lgray](x^-1_\the\numexpr\k+9\relax) -- (y^\the\numexpr7-\k\relax_17);
\draw[very thin, lgray](y^\the\numexpr\k-1\relax_17) -- (y^8_\the\numexpr\k+8\relax);
}
\foreach \l in {-1, 0, 1, ..., 8}{
\draw[very thin, lgray](x^\l_-1) -- (x^\l_17);
\draw[very thin, lgray](y^\l_-1) -- (y^\l_17);
\foreach \k in {-1, 0, 1, 2, ..., 17}{
\draw (x^\l_\k)[lgray, fill=white]circle(\vr);
\draw (y^\l_\k)[lgray, fill=white]circle(\vr);
}
}
%
\foreach \j in {0,...,16} { 
\foreach \i in {0,...,\j}{  
\coordinate(x^\j_\i) at (8-0.5*\j+\i, 16-\j);
}
\ifthenelse{\j>0}{\draw[very thick](x^\j_0) -- (x^\j_1); \draw[very thick](x^\j_\the\numexpr\j-1\relax) -- (x^\j_\j); }{} 
}
\foreach \j in {0,...,15} { 
\draw[very thick](x^15_\j) -- (x^16_\j);
\draw[very thick](x^15_\j) -- (x^16_\the\numexpr\j+1\relax);
 }
\foreach \j in {4, 6, 8, 10, 12, 14, 16}{  
\draw[very thick](x^\j_1) -- (x^\j_2);  
\draw[very thick](x^\j_\the\numexpr\j-2\relax) -- (x^\j_\the\numexpr\j-1\relax);
\draw[very thick](x^14_\the\numexpr\j-4\relax) -- (x^15_\the\numexpr\j-4\relax); 
\draw[very thick](x^14_\the\numexpr\j-4\relax) -- (x^15_\the\numexpr\j-3\relax); 
} 
\draw[very thick](x^14_14) -- (x^15_14); 
\draw[very thick](x^14_14) -- (x^15_15); 
\foreach \j in {7, 8, 11, 12, 15, 16}{  
\draw[very thick](x^\j_2) -- (x^\j_3);  
\draw[very thick](x^\j_\the\numexpr\j-3\relax) -- (x^\j_\the\numexpr\j-2\relax); 
\draw[very thick](x^13_\the\numexpr\j-7\relax) -- (x^14_\the\numexpr\j-7\relax); 
\draw[very thick](x^13_\the\numexpr\j-7\relax) -- (x^14_\the\numexpr\j-6\relax); 
} 
\draw[very thick](x^13_12) -- (x^14_12); 
\draw[very thick](x^13_12) -- (x^14_13); 
\draw[very thick](x^13_13) -- (x^14_13); 
\draw[very thick](x^13_13) -- (x^14_14); 
\foreach \j in {8, 12, 16}{  
\draw[very thick](x^\j_3) -- (x^\j_4);  
\draw[very thick](x^\j_\the\numexpr\j-4\relax) -- (x^\j_\the\numexpr\j-3\relax); 
\draw[very thick](x^12_\the\numexpr\j-8\relax) -- (x^13_\the\numexpr\j-8\relax); 
\draw[very thick](x^12_\the\numexpr\j-8\relax) -- (x^13_\the\numexpr\j-7\relax); 
} 
\draw [very thick](x^12_12) -- (x^13_12); 
\draw [very thick](x^12_12) -- (x^13_13); 
\foreach \j in {13, 14, 15, 16}{  
\draw[very thick](x^\j_4) -- (x^\j_5);  
\draw[very thick](x^\j_\the\numexpr\j-9\relax) -- (x^\j_\the\numexpr\j-8\relax); 
\draw[very thick](x^\j_8) -- (x^\j_9); 
\draw[very thick](x^\j_\the\numexpr\j-4\relax) -- (x^\j_\the\numexpr\j-5\relax); 
\draw [very thick](x^11_\the\numexpr16-\j\relax) -- (x^12_\the\numexpr16-\j\relax); 
\draw [very thick](x^11_\the\numexpr24-\j\relax) -- (x^12_\the\numexpr24-\j\relax); 
\draw [very thick](x^11_\the\numexpr16-\j\relax) -- (x^12_\the\numexpr17-\j\relax); 
\draw [very thick](x^11_\the\numexpr24-\j\relax) -- (x^12_\the\numexpr25-\j\relax); 
\draw [very thick](x^7_\the\numexpr16-\j\relax) -- (x^8_\the\numexpr16-\j\relax); 
\draw [very thick](x^7_\the\numexpr20-\j\relax) -- (x^8_\the\numexpr20-\j\relax); 
\draw [very thick](x^7_\the\numexpr16-\j\relax) -- (x^8_\the\numexpr17-\j\relax); 
\draw [very thick](x^7_\the\numexpr20-\j\relax) -- (x^8_\the\numexpr21-\j\relax); 
\draw [very thick](x^3_\the\numexpr16-\j\relax) -- (x^4_\the\numexpr16-\j\relax); 
\draw [very thick](x^3_\the\numexpr16-\j\relax) -- (x^4_\the\numexpr17-\j\relax); 
\draw [very thick](x^6_\the\numexpr\j*2-26\relax) -- (x^7_\the\numexpr\j*2-26\relax); 
\draw [very thick](x^6_\the\numexpr\j*2-26\relax) -- (x^7_\the\numexpr\j*2-25\relax); 
\ifthenelse{\j < 15}{\draw [very thick](x^10_\the\numexpr\j*2-26\relax) -- (x^11_\the\numexpr\j*2-26\relax);}{} 
\ifthenelse{\j < 15}{\draw [very thick](x^10_\the\numexpr\j*2-26\relax) -- (x^11_\the\numexpr\j*2-25\relax);}{} 
\ifthenelse{\j > 14}{\draw [very thick](x^10_\the\numexpr\j*2-22\relax) -- (x^11_\the\numexpr\j*2-22\relax);}{} 
\ifthenelse{\j > 14}{\draw [very thick](x^10_\the\numexpr\j*2-22\relax) -- (x^11_\the\numexpr\j*2-21\relax);}{} 
\ifthenelse{\j > 14}{\draw [very thick](x^9_\the\numexpr16-\j\relax) -- (x^10_\the\numexpr16-\j\relax);}{} 
\ifthenelse{\j > 14}{\draw [very thick](x^9_\the\numexpr16-\j\relax) -- (x^10_\the\numexpr17-\j\relax);}{} 
\ifthenelse{\j > 14}{\draw [very thick](x^9_\the\numexpr24-\j\relax) -- (x^10_\the\numexpr24-\j\relax);}{} 
\ifthenelse{\j > 14}{\draw [very thick](x^9_\the\numexpr24-\j\relax) -- (x^10_\the\numexpr25-\j\relax);}{} 
\ifthenelse{\j > 14}{\draw [very thick](x^1_\the\numexpr16-\j\relax) -- (x^2_\the\numexpr16-\j\relax);}{} 
\ifthenelse{\j > 14}{\draw [very thick](x^1_\the\numexpr16-\j\relax) -- (x^2_\the\numexpr17-\j\relax);}{} 
\ifthenelse{\j > 14}{\draw [very thick](x^5_\the\numexpr16-\j\relax) -- (x^6_\the\numexpr16-\j\relax);}{} 
\ifthenelse{\j > 14}{\draw [very thick](x^5_\the\numexpr16-\j\relax) -- (x^6_\the\numexpr17-\j\relax);}{} 
\ifthenelse{\j > 14}{\draw [very thick](x^5_\the\numexpr20-\j\relax) -- (x^6_\the\numexpr20-\j\relax);}{} 
\ifthenelse{\j > 14}{\draw [very thick](x^5_\the\numexpr20-\j\relax) -- (x^6_\the\numexpr21-\j\relax);}{} 
}
\draw [very thick](x^16_5) -- (x^16_6) -- (x^16_7); 
\draw [very thick](x^16_9) -- (x^16_10) -- (x^16_11); 
\draw [very thick](x^1_0) -- (x^0_0) -- (x^1_1); 
\draw [very thick](x^9_0) -- (x^8_0) -- (x^9_1); 
\draw [very thick](x^9_8) -- (x^8_8) -- (x^9_9); 
\draw [very thick](x^3_0) -- (x^2_0) -- (x^3_1); 
\draw [very thick](x^3_2) -- (x^2_2) -- (x^3_3); 
\draw [very thick](x^5_0) -- (x^4_0) -- (x^5_1); 
\draw [very thick](x^5_4) -- (x^4_4) -- (x^5_5); 

\foreach \j in {0,...,16} {
\draw(x^16_\j)[fill=white]circle(\vr);
\ifthenelse{\j <16}{\draw(x^15_\j)[fill=white]circle(\vr);}{}
\ifthenelse{\j <9}{\draw(x^8_\j)[fill=white]circle(\vr);}{}
\ifthenelse{\j <8}{\draw(x^7_\j)[fill=white]circle(\vr);}{}
\ifthenelse{\j <5}{\draw(x^4_\j)[fill=white]circle(\vr);}{}
\ifthenelse{\j <4}{\draw(x^3_\j)[fill=white]circle(\vr);}{}
\ifthenelse{\j <3}{\draw(x^2_\j)[fill=white]circle(\vr);}{}
\ifthenelse{\j <2}{\draw(x^1_\j)[fill=white]circle(\vr);}{}
\ifthenelse{\j <1}{\draw(x^0_\j)[fill=white]circle(\vr);}{}
\ifthenelse{\j <5}{\draw(x^12_\j)[fill=white]circle(\vr);}{}
\ifthenelse{\j <5}{\draw(x^12_\the\numexpr8+\j\relax)[fill=white]circle(\vr);}{}
\ifthenelse{\j <4}{\draw(x^11_\j)[fill=white]circle(\vr);}{}
\ifthenelse{\j <4}{\draw(x^11_\the\numexpr8+\j\relax)[fill=white]circle(\vr);}{}
}
\foreach \j in {0, 1, 2}{
\draw(x^14_\j)[fill=white]circle(\vr);
\draw(x^14_\the\numexpr4+\j\relax)[fill=white]circle(\vr);
\draw(x^14_\the\numexpr\j+8\relax)[fill=white]circle(\vr);
\draw(x^14_\the\numexpr\j+12\relax)[fill=white]circle(\vr);
\draw(x^10_\j)[fill=white]circle(\vr);
\draw(x^10_\the\numexpr\j+8\relax)[fill=white]circle(\vr);
\draw(x^6_\j)[fill=white]circle(\vr);
\draw(x^6_\the\numexpr4+\j\relax)[fill=white]circle(\vr);
}
\foreach \j in {0, 1}{
\draw(x^13_\j)[fill=white]circle(\vr);
\draw(x^13_\the\numexpr\j+4\relax)[fill=white]circle(\vr);
\draw(x^13_\the\numexpr\j+8\relax)[fill=white]circle(\vr);
\draw(x^13_\the\numexpr\j+12\relax)[fill=white]circle(\vr);
\draw(x^9_\j)[fill=white]circle(\vr);
\draw(x^9_\the\numexpr\j+8\relax)[fill=white]circle(\vr);
\draw(x^5_\j)[fill=white]circle(\vr);
\draw(x^5_\the\numexpr\j+4\relax)[fill=white]circle(\vr);
}
%
\foreach \j in {0,...,16} {
\ifthenelse{\j <7}{\draw(x^15_\the\numexpr2*\j+1\relax)node[right]{2};}{}
\ifthenelse{\j <2}{\draw(x^8_\the\numexpr4*\j+3\relax)node[below]{2};}{}
\ifthenelse{\j <2}{\draw(x^6_\the\numexpr4*\j+1\relax)node[below]{2};}{}
\ifthenelse{\j <4}{\draw(x^16_\the\numexpr4*\j+3\relax)node[below]{3};}{}
\ifthenelse{\j <4}{\draw(x^13_\the\numexpr4*\j\relax)node[left]{3};}{}
\ifthenelse{\j <2}{\draw(x^10_\the\numexpr8*\j+1\relax)node[below]{3};}{}
\ifthenelse{\j <4}{\draw(x^15_\the\numexpr4*\j+2\relax)node[left]{4};}{}
\ifthenelse{\j <2}{\draw(x^7_\the\numexpr4*\j+2\relax)node[left]{4};}{}
\ifthenelse{\j <2}{\draw(x^12_\the\numexpr8*\j+3\relax)node[below]{11};}{}
}
\draw(x^1_0)node[left]{2};
\draw(x^3_3)node[right]{2};
\draw(x^4_1)node[below]{2};
\draw(x^9_0)node[left]{2};
\draw(x^11_3)node[right]{2};
\draw(x^11_8)node[left]{2};
\draw(x^11_10)node[left]{2};
\draw(x^12_1)node[below]{2};
\draw(x^13_13)node[right]{2};
\draw(x^8_5)node[below]{3};
\draw(x^7_1)node[right]{3};
\draw(x^5_5)node[right]{3};
\draw(x^3_0)node[left]{3};
\draw(x^11_2)node[left]{4};
\draw(x^11_9)node[right]{4};
\draw(x^3_2)node[left]{4};
\draw(x^15_12)node[left]{5};
\draw(x^12_9)node[below]{5};
\draw(x^13_5)node[right]{5};
\draw(x^7_0)node[left]{5};
\draw(x^7_5)node[right]{5};
\draw(x^2_1)node[below]{5};
\draw(x^13_1)node[right]{5};
\draw(x^16_5)node[below]{6};
\draw(x^16_13)node[below]{6};
\draw(x^11_1)node[right]{6};
\draw(x^9_8)node[left]{6};
\draw(x^5_1)node[right]{6};
\draw(x^15_8)node[left]{7};
\draw(x^9_1)node[right]{7};
\draw(x^1_1)node[right]{7};
\draw(x^14_5)node[below]{8};
\draw(x^9_9)node[right]{8};
\draw(x^5_0)node[left]{8};
\draw(x^14_1)node[below]{9};
\draw(x^14_9)node[below]{9};
\draw(x^7_4)node[left]{9};
\draw(x^14_13)node[below]{10};
\draw(x^8_1)node[below]{10};
\draw(x^16_9)node[below]{12};
\draw(x^7_3)node[above right]{12};
\draw(x^15_15)node[right]{13};
\draw(x^16_1)node[below]{14};
\draw(x^11_0)node[left]{15};
\draw(x^5_4)node[left]{16};
\draw(x^13_9)node[right]{17};
\draw(x^3_1)node[above right]{21*};
\draw(x^4_3)node[below]{18*};
\draw(x^7_7)node[right]{19*};
\draw(x^11_11)node[right]{20*};
%
\draw(x^15_0)[fill=orange]circle(3pt);
\draw(x^15_4)[fill=orange]circle(3pt);
\draw(x^11_11)[fill=orange]circle(3pt);
\draw(x^7_7)[fill=orange]circle(3pt);
\draw(x^4_3)[fill=orange]circle(3pt);
\draw(x^3_1)[fill=orange]circle(3pt);
\end{tikzpicture}
\caption{Partial packing coloring of the graph $ST^4_3$} 
\label{fig:trikotnik_barvanje}
\end{center}
\end{figure}
   
First, color the vertices of each of the (three) subgraphs of $ST^5_3$ isomorphic to $ST^4_3$ as shown in Fig.~\ref{fig:trikotnik_barvanje}. More precisely, the colors, which are labelled with a star, are used only in the subgraphs $1ST^4_3$ and $2ST^4_3$. Color the remaining vertices of $ST_3(5)$, which correspond to all orange labelled vertices of $ST_3(4)$, with different colors, each by one of the colors from $\{22, 23, \ldots, 31\}$. (Color the others not labelled vertices by color $1$.) 
We claim that in this way we obtain a $31$-packing coloring of $ST^5_3$ (and also of any $ST^n_3$ for $n\ge 5$).

By using the described coloring of the graph $ST^5_3$ (and of $ST^n_3$, which is obtained by just copying $3^{n-5}$ times the colorings of $ST^5_3$), in each subgraph of $ST^5_3$, which is isomorphic to $ST^4_3$, there are exactly six uncolored vertices before applying the colors $18, 19, \ldots , 31$. Since $ST^5_3$ consists of three distinct copies of $ST^4_3$, this means that altogether there are $18$ uncolored vertices in $ST^5_3$. Eight of them are then colored by colors $18, 19, 20, 21$ and the remaining ten vertices by new, distinct colors. Therefore all vertices of the graph $ST^5_3$ (and of $ST^n_3$, $n \geq 5$) are colored by using the described coloring. 

Now, we prove that the described coloring is a packing coloring of $ST^5_3$ (and of $ST^n_3$). For each color $i$, $1 \leq i \leq 17$, used for the packing coloring of $ST^4_3$, which is shown in Fig.~\ref{fig:trikotnik_barvanje}, the following holds: 
\begin{enumerate}[(i)] 
\item
if there exist at least two vertices of $ST^4_3$, both colored by color $i$, then any such two vertices are at distance at least $i+1$; 
\item
if there exist at least two vertices of $ST^4_3$, both colored by color $i$, then for any such two of them, denoted by $a$ and $b$, the sum of the distance from vertex $a$ to the nearest extreme vertex and the distance from vertex $b$ to its nearest extreme vertex is at least $i+1$ (the color of extreme vertices need not be considered, because they are identified when building $ST^5_3$ from three $ST^4_3$'s);
\item
if there exists only one vertex of $ST^4_3$ colored by color $i$, then the sum of the distances from this vertex to any two extreme vertices, is at least $i+1$. \\
\end{enumerate}
This implies that any two vertices of $ST^n_3$, $n \geq 4$, both colored by color $i \in [17]$, are at distance more than $i$. 

It is clear, that two vertices of $ST^5_3$, colored by the same color $i \in \{18, 19, 20, 21\}$, are at distance at least $i+1$. Also the sum of the distance from one such vertex to the nearest extreme vertex and the distance from another such vertex to its nearest extreme vertex, is at least $i+1$. 
Hence any two vertices, colored by the same color $i \in \{18, 19, 20, 21\}$, are at distance more than $i$ in any of the graphs $ST^n_3$, $n \geq 5$. 

Recall that the diameter of $ST^5_3$ is $32$. For any vertex of $ST^5_3$, colored by color $i$, where $22 \leq i \leq 31$, the sum of the distances from this vertex to any two extreme vertices, is at least $i+1$. Therefore any two vertices of $ST^n_3$, $n \geq 5$, colored by the same color $i$, $22 \leq i \leq 31$, are at distance at least $i+1$.  Hence the described coloring is indeed a $31$-packing coloring of $ST^n_3$ for any $n \geq 5$.

\qed \end{proof}

\section{Open problems}
One can think of several natural open questions related to packing colorings of Sierpi\' nski-type graphs. We mention three problems that directly arise from results of this paper.

\begin{problem}
\label{p1}
Characterize the graphs $G$ for which the sequence $(\chi_\rho(S^n_G))_{n \in \NN}$ is bounded.
\end{problem}

\noindent  Note that from Corollary~\ref{cor:K4} follows that such graphs $G$ do not contain $K_4$ as a subgraph. Moreover, it can be proven in a similar way as in the proof of Corollary~\ref{cor:K4} that if $H$ is a subgraph of $G$, then $S^n_H$ is a subgraph of $S^n_G$ for any $n$. Hence the graphs for which the sequence $(\chi_\rho(S^n_G))_{n \in \NN}$ is bounded form a hereditary class of graphs. 

\begin{problem}
\label{p2}
Determine the exact values of $\chi_\rho(S^n_{K_4-e})$ and $\chi_\rho(S^n_{paw})$ for all $n$. 
\end{problem}

\noindent As Problem~\ref{p2} might be difficult, it would also be interesting to improve the lower and upper bounds for $\chi_\rho(S^n_{K_4-e})$ and $\chi_\rho(S^n_{paw})$ proven in this paper.

\begin{problem}
\label{p3}
What is the maximum of the set $\{\chi_{\rho}(ST^n_3)\,|\, n\in \NN\}$?
\end{problem}

\noindent  Again, it might already be challenging to improve the currently known upper bound for the packing chromatic number of the Sierpi\' nski triangle graphs, which is $31$ by Theorem~\ref{th:trikotnik}.

\section*{Acknowledgement}
B.B. acknowledges the financial support from the Slovenian Research Agency (research core funding No.\ P1-0297 and the project grant J1-7110).

\end{document}